\newtheorem{theorem}{Theorem}[section]
\newtheorem{lemma}[theorem]{Lemma}
\theoremstyle{definition}
\newtheorem{definition}[theorem]{Definition}
\newtheorem{example}[theorem]{Example}
\newtheorem{proposition}[theorem]{Proposition}
\newtheorem{corollary}[theorem]{Corollary}
\theoremstyle{remark}
\newtheorem{remark}[theorem]{Remark}
\numberwithin{equation}{section}
\begin{document}
\title[The projective cover of 0-Hecke modules]{The projective cover of tableau-cyclic indecomposable $H_n(0)$-modules}

\author[S.-I. Choi]{Seung-Il Choi}
\address{Center for Quantum Structures in Modules and Spaces, Seoul National University, Seoul 08826, Republic of Korea}
\email{ignatioschoi@snu.ac.kr}

\author[Y.-H. Kim]{Young-Hun Kim}
\address{Department of Mathematics, Sogang University, Seoul 04107, Republic of Korea \& Research Institute for Basic Science, Sogang University, Seoul 04107, Republic of Korea}
\email{ykim.math@gmail.com}

\author[S.-Y. Nam]{Sun-Young Nam}
\address{Department of Mathematics, Sogang University, Seoul 04107, Republic of Korea}
\email{synam.math@gmail.com}

\author[Y.-T. Oh]{Young-Tak Oh}
\address{Department of Mathematics, Sogang University, Seoul 04107, Republic of Korea}
\email{ytoh@sogang.ac.kr}

\newcommand{\nc}{\newcommand}
\nc{\SG}{\mathfrak{S}}
\nc{\PCT}{\mathrm{PCT}}
\nc{\SPCT}{\mathrm{SPCT}}
\nc{\RT}{\mathrm{RT}}
\nc{\SRT}{\mathrm{SRT}}
\nc{\RCT}{\mathrm{RCT}}
\nc{\SRCT}{\mathrm{SRCT}}
\nc{\SYCT}{\mathrm{SYCT}}
\nc{\SPYCT}{\mathrm{SPYCT}}
\nc{\stan}{\mathrm{stan}}
\nc{\Span}{\mathrm{span}}
\nc{\comp}{\mathrm{comp}}
\nc{\rmst}{\mathrm{st}}
\nc{\Des}{\mathrm{Des}}
\nc{\set}{\mathrm{set}}
\nc{\wt}{\mathrm{wt}}
\nc{\ch}{\mathrm{ch}}
\nc{\id}{\mathrm{id}}
\nc{\Sym}{\mathrm{Sym}}
\nc{\Qsym}{\mathrm{QSym}}
\nc{\Nsym}{\mathrm{NSym}}
\nc{\sh}{\mathrm{sh}}
\nc{\bfS}{\mathbf{S}}
\nc{\bfm}{\mathbf{m}}
\nc{\hbfS}{\widehat{\mathbf{S}}}
\nc{\bfF}{\mathbf{F}}
\nc{\calS}{\mathcal{S}}
\nc{\hcalS}{\widehat{\mathcal{S}}}
\nc{\alphamax}{\alpha_{\rm max}}
\nc{\brho}{\overline{\rho}}
\nc{\bphi}{\overline{\phi}}
\nc{\calV}{\mathcal{V}}
\nc{\calR}{\mathcal{R}}
\nc{\calG}{\mathcal{G}}
\nc{\tal}{\lambda(\alpha)}
\nc{\tbe}{\widetilde{\beta}}
\nc{\opi}{\overline{\pi}}
\nc{\calP}{\mathcal{P}}
\nc{\rmtop}{\mathrm{top}}
\nc{\rad}{\mathrm{rad}}
\nc{\bfP}{\mathbf{P}}
\nc{\SET}{\mathrm{SET}}
\nc{\SIT}{\mathrm{SIT}}
\nc{\rev}{\mathrm{r}}
\nc{\Th}{\theta}
\nc{\htau}{\widehat{\tau}}
\nc{\mPhi}{\Phi}
\nc{\mphi}{\phi}
\nc{\mPsi}{\Psi}
\nc{\hmPsi}{\widehat{\Psi}}
\nc{\mpsi}{\psi}
\nc{\mGam}{\Gamma}
\nc{\tcd}{\mathtt{cd}}
\nc{\trd}{\mathtt{rd}}
\nc{\trcd}{\mathtt{rcd}}
\nc{\rmr}{\mathrm{r}}
\nc{\rmc}{\mathrm{c}}
\nc{\rmt}{\mathrm{t}}
\nc{\bubact}{\,\scalebox{0.6}{$\bullet$}\,}
\nc{\hbubact}{\,\scalebox{0.6}{$\widehat{\bullet}$}\,}
\nc{\col}{\rm col}
\nc{\row}{\rm row}
\nc{\calE}{\mathcal{E}}
\nc{\calT}{\mathscr{T}}
\nc{\sfT}{\mathsf{T}}
\nc{\calEsa}{\mathcal{E}^\sigma(\alpha)}
\nc{\tauC}{\tau_{\scalebox{0.5}{$C$}}}
\nc{\sytabC}{\sytab_{\scalebox{0.5}{$C$}}}
\nc{\bbfP}{\overline{\bfP}}
\nc{\pr}{\mathbf{pr}}
\nc{\Ups}{\Upsilon}
\nc{\pact}{\diamond}
\nc{\tauE}{\tau_{\scalebox{0.5}{$E$}}}
\nc{\tauF}{\tau_{\scalebox{0.5}{$F$}}}
\nc{\tauG}{\tau_{\scalebox{0.5}{$G$}}}
\nc{\rtE}{T_{\scalebox{0.5}{$E$}}}
\nc{\rtF}{T_{\scalebox{0.5}{$F$}}}
\nc{\rtG}{T_{\scalebox{0.5}{$G$}}}
\nc{\oPaE}{\overline{\Phi}_{\alpha_E}}
\nc{\oPaF}{\overline{\Phi}_{\alpha_F}}
\nc{\oPaG}{\overline{\Phi}_{\alpha_G}}
\nc{\tab}{\tau}
\nc{\sytab}{\widehat{\tau}}
\nc{\hatE}{\widehat{E}}
\nc{\hcalE}{\widehat{\calE}}
\nc{\hatC}{\widehat{C}}
\nc{\bal}{{\boldsymbol{\upalpha}}}
\nc{\bbe}{{\boldsymbol{\upbeta}}}
\nc{\bgam}{{\boldsymbol{\upgamma}}}
\nc{\bdel}{{\boldsymbol{\updelta}}}
\nc{\weakcon}{\odot}
\nc{\calB}{\mathcal{B}}
\nc{\calM}{\mathcal{M}}
\nc{\calN}{\mathcal{N}}
\newcommand{\g}{\mathfrak{g}}
\newcommand{\la}{\lambda}
\newcommand{\h}{\mathfrak{h}}
\newcommand{\te}{\widetilde{e}}
\newcommand{\tf}{\widetilde{f}}
\newcommand{\mf}{\mathfrak}
\newcommand{\cP}{\mathscr{P}}
\newcommand{\gl}{\mathfrak{gl}}
\nc{\ldalpha}{\lambda(\alpha)}
\nc{\SRIT}{\mathrm{SRIT}}
\nc{\re}{\mathrm{rev}}
\nc{\otau}{\overline{\tau}}
\nc{\rtop}{{\rm top}}
\nc{\sfc}{\mathsf{c}}
\nc{\sfr}{\mathsf{r}}
\nc{\tH}{\mathtt{H}}
\nc{\tV}{\mathtt{V}}

\nc{\SPCTsa}{\SPCT^\sigma(\alpha)}
\nc{\bfSsa}{\bfS_\alpha^\sigma}
\nc{\bfSsaC}{{\bfS}^\sigma_{\alpha,C}}
\nc{\hbfSsa}{\widehat{\bfS}_\alpha^\sigma}
\nc{\upineq}{\rotatebox{90}{$<$}}
\nc{\downineq}{\rotatebox{270}{$<$}}
\nc{\diagineq}{\rotatebox{135}{$<$}}

\nc{\ra}{\rightarrow}

\def\C{\mathbb C}
\def\Z{\mathbb Z}

\newcommand{\precdot}{\prec\mathrel{\mkern-3mu}\mathrel{\cdot}}
\newcommand{\succdot}{\hskip3pt\cdot\mathrel{\mkern-10mu}\mathrel{\succ}}
\def\rddots#1{\cdot^{\cdot^{\cdot^{#1}}}}

\keywords{$0$-Hecke algebra, Projective cover, Quasisymmetric characteristic, Dual immaculate quasisymmetric function, Extended Schur function, Quasisymmetric Schur function}

\subjclass[2020]{20C08, 05E05, 05E10}

\date{\today}

\begin{abstract}
Let $\alpha$ be a composition of $n$ and $\sigma$ a permutation in $\SG_{\ell(\alpha)}$. 
This paper concerns the projective covers of $H_n(0)$-modules $\calV_\alpha$, $X_\alpha$, and 
$\bfS^\sigma_{\alpha}$
whose images under the quasisymmetric characteristic are
the dual immaculate quasisymmetric function, the extended Schur function, and the quasisymmetric Schur function when $\sigma$ is the identity, respectively.
First, we show that the projective cover of $\calV_\alpha$ is the projective indecomposable module $\bfP_\alpha$ due to Norton, and
$X_\alpha$ and the $\phi$-twist of
the canonical submodule $\bfS^{\sigma}_{\beta,C}$ of $\bfS^\sigma_{\beta}$ for $(\beta,\sigma)$'s satisfying suitable conditions
appear as homomorphic images of $\calV_\alpha$.
Second, we introduce a combinatorial model for the $\phi$-twist of $\bfS^\sigma_{\alpha}$ and derive a series of surjections starting from $\bfP_\alpha$ to the $\phi$-twist of $\bfS^\id_{\alpha,C}$.
Finally, we construct the projective cover of 
every indecomposable direct summand $\bfS^\sigma_{\alpha, E}$ of $\bfS^\sigma_{\alpha}$.
As a byproduct, we give a characterization of triples $(\sigma,\alpha,E)$ such that
the projective cover of $\bfS^\sigma_{\alpha,E}$ is indecomposable.
\end{abstract}

\maketitle
\tableofcontents


\section{Introduction}
In 1979, Norton~\cite{79Norton} classified all projective indecomposable $H_n(0)$-modules $\bfP_\alpha~(\alpha \models n)$
up to equivalence, which were given by $2^{n-1}$ left ideals generated by mutually orthogonal primitive idempotents.
These modules were again studied intensively in the 2000s (for instance, see~\cite{11Denton, 06HNT, 16Huang}).
In particular, Huang~\cite{16Huang} described their induced modules $\bfP_{\bal}$ in terms of standard ribbon tableaux of shape $\bal$, where $\bal$ ranges over the set of generalized compositions.

In the middle of 1990s, there was a breakthrough in the representation theory of 0-Hecke algebras.
To be precise, letting $\calG_0(H_n(0))$ be the Grothendieck group of the category of finitely generated $H_n(0)$-modules, 
their direct sum for all $n\ge 0$ endowed with the induction product is isomorphic to the ring $\Qsym$ of quasisymmetric functions~\cite{96DKLT, 97KT}
via the {\em quasisymmetric characteristic}
\begin{align*}
\ch : \bigoplus_{n \ge 0} \calG_0(H_n(0)) \ra \Qsym. \end{align*}
From the viewpoint of this correspondence, 
when we have a family of notable quasisymmetric functions,
it would be of great importance to investigate if these elements appear as the image of the isomorphism classes of certain modules with nice properties under the quasisymmetric characteristic. 
The related studies have been done for the quasisymmetric Schur functions and their permuted version~\cite{15TW, 19TW}, the dual immaculate quasisymmetric functions~\cite{15BBSSZ}, and the extended Schur functions~\cite{19Searles}, all of which form a basis of $\Qsym$.

Let us explain these results in more detail.
From now on, we fix $\alpha \models n$ and $\sigma \in \SG_{\ell(\alpha)}$, where $\ell(\alpha)$ is the length of $\alpha$.

Let $\SRCT(\alpha)$ be the set of \emph{standard reverse composition tableaux of shape $\alpha$}.
In ~\cite{15TW}, Tewari and van Willigenburg defined an $H_n(0)$-action on $\SRCT(\alpha)$ and showed that 
the resulting $H_n(0)$-module, denoted by  $\bfS_\alpha$, has the \emph{quasisymmetric Schur function} $\calS_\alpha$
as the image under the quasisymmetric characteristic. 

As a far-reaching generalization, they \cite{19TW} introduced new 
combinatorial objects, called \emph{standard permuted composition tableaux of shape $\alpha$ and type $\sigma$}, by weakening the condition on the first column in standard reverse composition tableaux.
Let $\SPCT^\sigma(\alpha)$ be the set of standard permuted composition tableaux of shape $\alpha$ and type $\sigma$. 
It was shown in~\cite{19TW} that $\SPCT^\sigma(\alpha)$ has an $H_n(0)$-action and the resulting $H_n(0)$-module, denoted by  $\bfS_\alpha^\sigma$, shares many properties with $\bfS_\alpha$. 
For instance, in the same way as in \cite{15TW}, one can give a colored graph structure on $\SPCT^\sigma(\alpha)$.
Let $\calE^\sigma(\alpha)$ be the set of connected components and 
$\bfS^\sigma_{\alpha,E}$ be the $H_n(0)$-submodule of $\bfS^\sigma_\alpha$ whose underlying space is the $\C$-span of $E$ for $E \in \calE^\sigma(\alpha)$.
Then, as $H_n(0)$-modules, 
\begin{align*}
\bfS_\alpha^{\sigma} \cong \bigoplus_{E \in \mathcal{E}^\sigma(\alpha)} \bfS_{\alpha,E}^{\sigma}
\end{align*}
and each direct summand $\bfS_{\alpha,E}^{\sigma}$ is generated by a single tableau.
In this paper, we say that an $H_n(0)$-module is \emph{tableau-cyclic} if it is generated by a single tableau.

Very recently, in~\cite{20CKNO}, an extensive study has been done for these modules.
The authors, following the same manner as in~\cite{19Konig}, show that every direct summand $\bfS_{\alpha,E}^{\sigma}$ is indecomposable. 
Further, they characterize when $\bfS^\sigma_\alpha$ is indecomposable and show that $\bfP_{\alpha \cdot \sigma^{-1}}$ is the projective cover of $\bfS^\sigma_{\alpha,C}$ for the canonical class $C$ of $\mathcal{E}^\sigma(\alpha)$.
For the definition of $\alpha \cdot \sigma^{-1}$, see Subsection~\ref{subsec: comp and diag}.

Next, let us review the result related to the \emph{dual immaculate quasisymmetric functions}.
Noncommutative Bernstein operators were introduced by Berg {\it et al.}~\cite{14BBSSZ}.
Applied to the identity element, they yield the immaculate functions, which forms a basis of the the ring of noncommutative symmetric functions.
Soon after, using the combinatorial objects called standard immaculate tableaux, they constructed tableau-cyclic indecomposable $H_n(0)$-modules $\calV_\alpha$
whose quasisymmetric characteristics are the quasisymmetric functions which are dual to immaculate functions (see~\cite{15BBSSZ}).

Finally, let us review the result related to the \emph{extended Schur functions}. %
In~\cite{19AS}, Assaf and Searles defined the extended Schur functions as the stable limits of lock polynomials and showed that they form a basis of $\Qsym$.
And, using standard extended tableaux, Searles~\cite{19Searles} constructed tableau-cyclic indecomposable $H_n(0)$-modules $X_\alpha$ whose quasisymmetric characteristics are the extended Schur functions.

This paper mainly concerns projective covers of the modules mentioned in the above.   
As is well known, a projective cover of a finitely generated module $M$, which is the best approximation of $M$ by a projective module, plays an extremely important role in understanding the structure of $M$.
For instance, finding the projective cover is a key step to construct the minimal projective presentation (see~\cite{95ARS}).

The first objective of the present paper is to demonstrate the relationship among $\calV_\alpha$,
$X_\alpha$ and $\bfS^\sigma_{\alpha,C}$.
We construct a series of surjections
\begin{equation}\label{eq: surj series 1 intro}
\begin{tikzcd}
\bfP_{\alpha} \arrow[r, two heads] & 
\calV_\alpha \arrow[r, two heads] & 
X_\alpha \arrow[r, two heads] & 
\mphi[\bfS^\sigma_{\tal,C}],
\end{tikzcd}
\end{equation}
where $\sigma$ ranges over the set of permutations in $\SG_{\ell(\alpha)}$ satisfying that 
$\lambda(\alpha) = \alpha^\rmr \cdot \sigma$
and $\mphi[\bfS^\sigma_{\tal,C}]$ is the $\phi$-twist of $\bfS^\sigma_{\tal,C}$ 
(see~\eqref{eq: phi twist}).
The series~\eqref{eq: surj series 1 intro} enables us to
deal with these modules in a uniform way although they appear in the different contexts. 
Then we give a surjection from $\bfSsaC$ to $\bfS^{\sigma s_i}_{\alpha \cdot s_i,C}$ when $\ell(\sigma s_i) < \ell(\sigma)$ (see Theorem~\ref{thm: surj btw canonical}).
Using this result repeatedly, we finally arrive at a series of surjections starting from $\bfP_\alpha$ and ending at $\phi[\bfS^{\id}_{\alpha^\rmr,C}]$.

On the other hand, one can describe the $\phi$-twist of $\bfS^{\sigma}_{\alpha}$ in a combinatorial way using \emph{standard permuted Young composition tableaux} (see Definition~\ref{def: SPYCT}).
Giving a suitable $H_n(0)$-action on these tableaux, we obtain a new $H_n(0)$-module $\hbfS^\sigma_{\alpha}$, 
which is isomorphic to $\mphi[\bfS^{\sigma^{w_0}}_{\alpha^\rmr}]$ with $\sigma^{w_0} = w_0 \sigma w_0^{-1}$.
Using these modules, we derive a series of surjections starting from $\bfP_\alpha$ and ending at $\hbfS^{\id}_{\alpha,C}$.
More precisely, for any reduced expression $s_{i_1}\cdots s_{i_k}$ of $\sigma$, we have
\begin{equation}\label{eq: surj seq}
\begin{tikzcd}
\bfP_{\alpha}  \arrow[r, two heads] &
\calV_{\alpha} \arrow[r, two heads] &
X_{\alpha} \arrow[r,  two heads] &
\hbfS^{\sigma}_{\ldalpha^\rmr,C} \arrow[r, two heads] &
\hbfS^{\sigma s_{i_{k}}}_{\ldalpha^\rmr \cdot s_{i_k},C} \arrow[r, two heads] &
\cdots \arrow[r, two heads] &
\hbfS^{\id}_{\alpha,C}
\end{tikzcd}
\end{equation}
(Corollary~\ref{Coro:Seq_of_PVXS}).

The second objective of the present paper is to find the projective cover of 
$\bfS^\sigma_{\alpha,E}$ for all classes $E \in \mathcal{E}^\sigma(\alpha)$.
As an initial step, 
we decompose the composition diagram of $\alpha$ into horizontal strips completely determined by the descents of the source tableau $\tauE$ in $E$ and then 
construct a generalized ribbon diagram by placing them vertically in a suitable way. 
Let $\bal_E$ be the shape of this generalized ribbon diagram. 
Then we construct a surjective $H_n(0)$-module homomorphism
$\eta: \bbfP_{\bal_E} \to \bfS^\sigma_{\alpha,E}$
(Theorem~\ref{Thm:Surjective homo}).
Here $\bbfP_{\bal_E}$ is a slight modification of $\bfP_{\bal_E}$,
which is isomorphic to $\bigoplus_\beta \bfP_{\beta^\rmc}$ if $\bfP_{\bal_E} = \bigoplus_\beta\bfP_{\beta}$.
For details, see Subsection~\ref{subsec: PIM}.

We next show that $\eta$ is an essential epimorphism, which is the most nontrivial part in the present paper.
This can be achieved by showing that $\ker(\eta)$ is contained in $\rad(\bbfP_{\bal_E})$.
To do this, we first give an explicit description of $\ker(\eta)$ which is the $\C$-span of standard ribbon tableaux of shape $\bal_E$ satisfying the four conditions in~\eqref{Eq: Description of kernel}.
Then we give a sufficient condition for a standard ribbon tableau of shape $\bal_E$ to be contained the radical of $\bbfP_{\bal_E}$ (Lemma~\ref{Lem: To be contained in Rad}).
Finally, using the description of $\ker(\eta)$, we show that the tableaux generating $\ker(\eta)$ satisfies the sufficient condition above.
Consequently we conclude that $\bbfP_{\bal_E}$ is the projective cover of $\bfS^\sigma_{\alpha,E}$ (Theorem~\ref{Thm:projective cover S}).
As a byproduct of this theorem, we give a characterization of triples $(\sigma,\alpha,E)$ such that
the projective cover of $\bfS^\sigma_{\alpha,E}$ is indecomposable (Corollary~\ref{Cor: Classification covers}).

This paper is organized as follows.
In Section~\ref{Preliminaries}, we collect the materials which are required to develop our arguments. 
In Section~\ref{Section3}, 
we show that the projective cover of $\calV_\alpha$ is a projective indecomposable module, and $X_\alpha$ and the $\phi$-twist of the canonical submodule $\bfS^{\sigma}_{\beta,C}$ of $\bfS^\sigma_{\beta}$ for $(\beta,\sigma)$'s satisfying suitable conditions appear as homomorphic images of $\calV_\alpha$.
In Section~\ref{sec: surj from bfSsaC},
we introduce a combinatorial model for the $\phi$-twist of $\bfS^\sigma_{\alpha}$ and derive a series of surjections in~\eqref{eq: surj seq}.
The final section is devoted to the construction of the projective cover of $\bfS^\sigma_{\alpha,E}$ for all classes $E$.

\section{Preliminaries}
\label{Preliminaries}

In this section, $n$ denotes a nonnegative integer. 
Define $[n]$ to be $\{1,\ldots, n\}$ if $n > 0$ and $\emptyset$ else. 
In addition, we set $[-1]:=\emptyset$.
For positive integers $i\le j$, set $[i,j]:=\{i,i+1,\ldots, j\}$.

\subsection{Compositions and their diagrams}\label{subsec: comp and diag}
A \emph{composition} $\alpha$ of a nonnegative integer $n$, denoted by $\alpha \models n$, is a finite ordered list of positive integers $(\alpha_1,\ldots, \alpha_k)$ satisfying $\sum_{i=1}^k \alpha_i = n$.
For each $1 \le i \le k$, let us call $\alpha_i$ a \emph{part} of $\alpha$. And we call $k =: \ell(\alpha)$ the \emph{length} of $\alpha$ and $n =:|\alpha|$ the \emph{size} of $\alpha$. For convenience we define the empty composition $\emptyset$ to be the unique composition of size and length $0$.

Given $\alpha = (\alpha_1,\ldots,\alpha_k) \models n$ and $I = \{i_1 < \cdots < i_k\} \subset [n-1]$, 
let 
\begin{align*}
&\set(\alpha) := \{\alpha_1,\alpha_1+\alpha_2,\ldots, \alpha_1 + \alpha_2 + \cdots + \alpha_{k-1}\}, \\
&\comp(I) := (i_1,i_2 - i_1,\ldots,n-i_k).
\end{align*}
The set of compositions of $n$ is in bijection with the set of subsets of $[n-1]$ under the correspondence $\alpha \mapsto \set(\alpha)$ (or $I \mapsto \comp(I)$).

If $\alpha = (\alpha_1,\ldots, \alpha_k) \models n$ is such that $\alpha_1 \ge \cdots \ge \alpha_k$, then we say that $\alpha$ is \emph{partition} of $n$ and denote this by $\alpha \vdash n$. The partition obtained by sorting the parts of $\alpha$ in the weakly decreasing order is denoted by $\tal$.

Let $\alpha = (\alpha_1,\ldots, \alpha_k) \models n$. We define the \emph{composition diagram} $\tcd(\alpha)$ (resp. \emph{reverse composition diagram} $\trcd(\alpha)$) of $\alpha$ by a left-justified array of $n$ boxes where the $i$th row from the top (resp. from the bottom) has $\alpha_i$ boxes for $1 \le i \le k$.
We also define the \emph{ribbon diagram} $\trd(\alpha)$ of $\alpha$ by the connected skew diagram without $2 \times 2$ boxes, such that the $i$th row from the bottom has $\alpha_i$ boxes. 
For example, when $\alpha = (3,1,2)$, we have
\begin{displaymath}
\begin{tikzpicture}
\def\hhh{4mm}
\def\vvv{5mm}
\def\www{0.20mm}
\draw[-] (\hhh*0,-\vvv*1) rectangle (\hhh*3,\vvv*0);
\draw[-] (\hhh*1,-\vvv*1) -- (\hhh*1,\vvv*0);
\draw[-] (\hhh*2,-\vvv*1) -- (\hhh*2,\vvv*0);
\node[left] at (\hhh*0,-\vvv*1.5) {$\tcd(\alpha)=$};
\draw[-] (\hhh*0,-\vvv*2) rectangle (\hhh*1,-\vvv*1);
\draw[-] (\hhh*0,-\vvv*3) rectangle (\hhh*2,-\vvv*2);
\draw[-] (\hhh*1,-\vvv*3) -- (\hhh*1,-\vvv*2);
\node[] at (\hhh*3.3,-\vvv*1.9) {,};
\end{tikzpicture}
\hskip 10mm
\begin{tikzpicture}
\def\hhh{4mm}
\def\vvv{5mm}
\def\www{0.20mm}
\node[left] at (\hhh*0,-\vvv*1.5) {$\trcd(\alpha)=$};
\draw[-] (\hhh*0,-\vvv*1) rectangle (\hhh*2,\vvv*0);
\draw[-] (\hhh*1,-\vvv*1) -- (\hhh*1,\vvv*0);
\draw[-] (\hhh*0,-\vvv*2) rectangle (\hhh*1,-\vvv*1);
\draw[-] (\hhh*0,-\vvv*3) rectangle (\hhh*3,-\vvv*2);
\draw[-] (\hhh*1,-\vvv*3) -- (\hhh*1,-\vvv*2);
\draw[-] (\hhh*2,-\vvv*3) -- (\hhh*2,-\vvv*2);
\end{tikzpicture}
\hskip 10mm
\begin{tikzpicture}
\def\hhh{4mm}
\def\vvv{5mm}
\def\www{0.20mm}
\node[left] at (-\hhh*4,-\vvv*1.5) {and};
\node[left] at (\hhh*0,-\vvv*1.5) {$\trd(\alpha)=$};
\draw[-] (\hhh*2,-\vvv*1) rectangle (\hhh*4,\vvv*0);
\draw[-] (\hhh*3,-\vvv*1) -- (\hhh*3,\vvv*0);
\draw[-] (\hhh*2,-\vvv*2) rectangle (\hhh*3,-\vvv*1);
\draw[-] (\hhh*0,-\vvv*3) rectangle (\hhh*3,-\vvv*2);
\draw[-] (\hhh*1,-\vvv*3) -- (\hhh*1,-\vvv*2);
\draw[-] (\hhh*2,-\vvv*3) -- (\hhh*2,-\vvv*2);
\node[] at (\hhh*4.3,-\vvv*1.9) {.};
\end{tikzpicture}
\end{displaymath}

A box is said to be in the $i$th row if it is in the $i$th row from the top, and in the $j$th column if it is in the $j$th column from the left.
We use $(i,j)$ to denote the box in the $i$th row and $j$th column.
For a filling $\tau$ of $\tcd(\alpha)$ or $\trcd(\alpha)$, we denote by $\tau_{i,j}$ the entry at $(i,j)$ in $\tau$.

For a composition $\alpha = (\alpha_1,\ldots,\alpha_k) \models n$, let $\alpha^\rmr$ denote the composition $(\alpha_k, \ldots, \alpha_1)$ and $\alpha^\rmc$ the unique composition satisfying that $\set(\alpha^c) = [n-1] \setminus \set(\alpha)$, and let $\alpha^\rmt := (\alpha^\rmr)^\rmc = (\alpha^\rmc)^\rmr$. Note that $\trd(\alpha^\rmt)$ is obtained by reflecting $\trd(\alpha)$ along the diagonal. 

The symmetric group $\SG_k$ acts (on the right) on the set of compositions of length $k$ by place permutation. 
In particular, $\alpha \cdot w_0 = \alpha^\rmr$, where $w_0$ is the longest element in $\SG_k$.

\subsection{The $0$-Hecke algebra and the quasisymmetric characteristic}
To begin with, we recall that the symmetric group $\SG_n$ is generated by simple transpositions $s_i := (i,i+1)$ with $1 \le i \le n-1$.
An expression for $\sigma \in \SG_n$ of the form $s_{i_1} \cdots s_{i_p}$ that uses the minimal number of simple transpositions is called a \emph{reduced expression} for $\sigma$. 
The number of simple transpositions in any reduced expression for $\sigma$, denoted by $\ell(\sigma)$, is called the \emph{length} of $\sigma$.

The $0$-Hecke algebra $H_n(0)$ is the $\C$-algebra generated by $\pi_1,\ldots,\pi_{n-1}$ subject to the following relations:
\begin{align*}
\pi_i^2 &= \pi_i \quad \text{for $1\le i \le n-1$},\\
\pi_i \pi_{i+1} \pi_i &= \pi_{i+1} \pi_i \pi_{i+1}  \quad \text{for $1\le i \le n-2$},\\
\pi_i \pi_j &=\pi_j \pi_i \quad \text{if $|i-j| \ge 2$}.
\end{align*}
For each $1 \le i \le n-1$, let $\opi_i := \pi_i -1$. Then $\{\opi_i \mid i = 1,\ldots,n-1\}$ is also a generating set of $H_n(0)$, which satisfies the following relations:
\begin{align*}
\opi_i^2 &= -\opi_i \quad \text{for $1\le i \le n-1$},\\
\opi_i \opi_{i+1} \opi_i &= \opi_{i+1} \opi_i \opi_{i+1}  \quad \text{for $1\le i \le n-2$},\\
\opi_i \opi_j &=\opi_j \opi_i \quad \text{if $|i-j| \ge 2$}.
\end{align*}

Pick up any reduced expression $s_{i_1}\cdots s_{i_p}$ for a permutation $\sigma \in \SG_n$. Then the elements $\pi_{\sigma}$ and $\opi_{\sigma}$ of $H_n(0)$ are defined by
\[
\pi_{\sigma} := \pi_{i_1} \cdots \pi_{i_p} \quad \text{and} \quad \opi_{\sigma} := \opi_{i_1} \cdots \opi_{i_p}.
\]
It is well known that these elements are independent of the choice of reduced expressions, and both $\{\pi_\sigma \mid \sigma \in \SG_n\}$ and $\{\opi_\sigma \mid \sigma \in \SG_n\}$ are bases for $H_n(0)$.

Let $\calR(H_n(0))$ denote the $\Z$-span of the isomorphism classes of finite dimensional representations of $H_n(0)$. The isomorphism class corresponding to an $H_n(0)$-module $M$ will be denoted by $[M]$. The \emph{Grothendieck group} $\calG_0(H_n(0))$ is the quotient of $\calR(H_n(0))$ modulo the relations $[M] = [M'] + [M'']$ whenever there exists a short exact sequence $0 \ra M' \ra M \ra M'' \ra 0$. The irreducible representations of $H_n(0)$ form a free $\Z$-basis for $\calG_0(H_n(0))$. Let
\[
\calG := \bigoplus_{n \ge 0} \calG_0(H_n(0)).
\]
According to \cite{79Norton}, there are $2^{n-1}$ distinct irreducible representations of $H_n(0)$. They are naturally indexed by compositions of $n$. Let $\bfF_{\alpha}$ denote the $1$-dimensional $\C$-vector space corresponding to the composition $\alpha \models n$, spanned by a vector $v_{\alpha}$. 
For each $1\le i \le n-1$, define an action of the generator $\pi_i$ of $H_n(0)$ as follows:
\[
\pi_i(v_\alpha) = \begin{cases}
0 & i \in \set(\alpha),\\
v_\alpha & i \notin \set(\alpha).
\end{cases}
\]
Then $\bfF_\alpha$ is an irreducible $1$-dimensional $H_n(0)$-representation.

In the following, let us review the connection between $\calG$ and quasisymmetric functions.
Quasisymmetric functions are power series of bounded degree in variables $x_{1},x_{2},x_{3},\ldots$  with coefficients in $\Z$, which are shift invariant in the sense that the coefficient of the monomial $x_{1}^{\alpha _{1}}x_{2}^{\alpha _{2}}\cdots x_{k}^{\alpha _{k}}$ is equal to the coefficient of the monomial $x_{i_{1}}^{\alpha _{1}}x_{i_{2}}^{\alpha _{2}}\cdots x_{i_{k}}^{\alpha _{k}}$ for any strictly increasing sequence of positive integers $i_{1}<i_{2}<\cdots <i_{k}$ indexing the variables and any positive integer sequence $(\alpha _{1},\alpha _{2},\ldots ,\alpha _{k})$ of exponents.

The ring $\Qsym$ of quasisymmetric functions is decomposed as
\[
\Qsym =\bigoplus _{n\geq 0} \Qsym_{n},
\]
where $\Qsym_{n}$ is the $\Z$-span of all quasisymmetric functions that are homogeneous of degree $n$.

Given a composition $\alpha$, the \emph{fundamental quasisymmetric function} $F_\alpha$ is defined by $F_\emptyset = 1$ and
\[
F_\alpha = \sum_{\substack{1 \le i_1 \le \cdots \le i_k \\ i_j < i_{j+1} \text{ if } j \in \set(\alpha)}} x_{i_1} \cdots x_{i_k}.
\]
For every nonnegative integer $n$, it is known that $\{F_\alpha \mid \alpha \models n\}$ is a basis for $\Qsym_n$.

It was shown in \cite{96DKLT} that, when $\calG$ is equipped with induction product, the linear map
\begin{align*}
\ch : \calG \ra \Qsym, \quad [\bfF_{\alpha}] \mapsto F_{\alpha},
\end{align*}
called \emph{quasisymmetric characteristic}, is a ring isomorphism.
One can see that, by the definition of Grothendieck group, if we have a short exact sequence $0 \ra M' \ra M \ra M'' \ra 0$ of finite dimensional $H_n(0)$-modules, then
\[
\ch([M]) = \ch([M']) + \ch([M'']).
\]

\subsection{The $H_n(0)$-action on standard ribbon tableaux}\label{subsec: PIM}

A \emph{generalized composition} $\bal$ of $n$, denoted by $\bal \models n$, is a formal composition $\alpha^{(1)} \oplus \cdots \oplus  \alpha^{(k)}$,
where $\alpha^{(i)} \models n_i$ for positive integers $n_i$'s with $n_1 + \cdots + n_k = n$.
Then the {\em generalized ribbon diagram} $\trd(\bal)$ of $\bal$ is 
a skew diagram whose connected components are $\trd(\alpha^{(1)}), \ldots, \trd(\alpha^{(k)})$ such that $\trd(\alpha^{(i+1)})$ is strictly to the northeast of $\trd(\alpha^{(i)})$ for $i = 1,\ldots, k-1$.
For instance, if $\bal = (2,1) \oplus (2)$, then
\[
\trd(\bal) = 
\begin{array}{l}
\begin{ytableau}
\none & \none & ~ & ~ \\
\none & ~  \\
~ & ~
\end{ytableau}
\end{array}.
\]

Let $\alpha = (\alpha_{1}, \ldots, \alpha_{k})$ and $\beta= (\beta_{1},\ldots, \beta_{l})$ be compositions.
Let $\alpha \cdot \beta$ be the \emph{concatenation}
and $\alpha \odot \beta$ the \emph{near concatenation} of $\alpha$ and $\beta$.
In other words,
$ \alpha \cdot \beta = (\alpha_1, \ldots, \alpha_k, \beta_1,\ldots, \beta_l)$ and 
$\alpha \odot \beta = (\alpha_1,\ldots, \alpha_{k-1},\alpha_k + 
\beta_1,\beta_2, \ldots, \beta_l)$.
For a generalized composition 
$\bal = \alpha^{(1)} \oplus \cdots \oplus  \alpha^{(m)}$,
we define $[\bal]$ to be the set consisting of $2^{m-1}$ compositions of the form
$$
\alpha^{(1)} \  \square \  \alpha^{(2)} \ \square \ \cdots \  \square  \ \alpha^{(m)},
\quad 
\text{where }
\square \in \{\odot, \cdot\}.
$$

\begin{definition}\label{def: SRT}
For a generalized composition $\bal \models n$, a \emph{standard ribbon tableau} (SRT) of shape $\bal$ is a filling of $\trd(\bal)$ by $1,2,\ldots,n$ without repetition such that every row increases from left to right and every column increases from top to bottom.
\end{definition}
Let $\SRT(\bal)$ denote the set of all standard ribbon tableaux of shape $\bal$.
Define an $H_n(0)$-action on the $\C$-span of $\SRT(\bal)$ by
\begin{align}\label{eq: action for ribbon}
\opi_i \cdot T = \begin{cases}
-T, & \text{if $i$ is strictly above $i+1$ in $T$},\\
0, & \text{if $i$ and $i+1$ are in the same row of $T$},\\
s_i \cdot T, & \text{if $i$ is strictly below $i+1$ in $T$}
\end{cases}
\end{align}
for $1\le i \le n-1$ and $T \in \SRT(\bal)$. 
Here $s_i \cdot T$ is obtained from $T$ by swapping $i$ and $i+1$. 
The resulting module is denoted by $\bfP_\bal$.

It is known that the $\bfP_\bal$ is projective for every generalized composition $\bal$.
To explain this, let us recall Norton's results on projective modules~\cite{79Norton}.
For $I \subseteq [n-1]$, let $I^\rmc := [n-1] \setminus I$. The \emph{parabolic subgroup} $\SG_{n,I}$ is the subgroup of $\SG_n$ generated by $\{s_i \mid i\in I\}$. We denote by $w_0(I)$ the longest element in $\SG_{n,I}$. 
Norton decomposed the regular representation of $H_n(0)$ into the direct sum of $2^{n-1}$ indecomposable submodules $\calP_I$ $(I \subset [n-1])$, which are defined by
\begin{align*}
\calP_I := H_n(0)\cdot \opi_{w_0(I)}\pi_{w_0(I^\rmc)}.
\end{align*}
Let $\rmtop(\calP_I)$ denote the top of $\calP_I$, that is, $\rmtop(\calP_I) := \calP_I / \rad \; \calP_I$.
It is known that, for each $I \subset [n-1]$, $\rmtop(\calP_I)$ is isomorphic to $\bfF_\alpha$ with $\set(\alpha) = I$.
The set $\{\calP_I \mid I \subseteq [n-1]\}$ is a complete list of non-isomorphic projective indecomposable $H_n(0)$-modules.

The following result is due to Huang~\cite{16Huang}.

\begin{theorem}{\rm (\cite[Theorem 3.3]{16Huang})}
\label{thm: bfP isom to calP}
\begin{enumerate}[label = {\rm (\alph*)}]
\item
Let $\alpha$ be a composition of $n$. 
Then $\bfP_\alpha$ is isomorphic to $\calP_{\set(\alpha)}$ as an $H_n(0)$-module.

\item
Let $\bal = \alpha^{(1)} \oplus \cdots \oplus \alpha^{(k)}$ be a generalized composition of $n$.
Then $\bfP_\bal$ is isomorphic to $\bigoplus_{\beta \in [\bal]} \bfP_\beta$ as an $H_n(0)$-module.
\end{enumerate}
\end{theorem}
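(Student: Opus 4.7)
The plan is to leverage Norton's classification: every projective indecomposable $H_n(0)$-module is isomorphic to some $\calP_I$, and $\calP_I$ is uniquely determined (up to isomorphism) by its simple top $\rmtop(\calP_I) \cong \bfF_\alpha$ with $\set(\alpha) = I$. Accordingly, for part (a) the task is to identify $\rmtop(\bfP_\alpha)$ with $\bfF_\alpha$ and verify projectivity, and for part (b) the task is to partition $\SRT(\bal)$ according to the possible gluings of the components of $\bal$ into a connected ribbon, yielding a compatible direct sum decomposition.

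For part (a), I would first exhibit a distinguished tableau $T_\alpha \in \SRT(\alpha)$, obtained by filling $\trd(\alpha)$ with $1, 2, \ldots, n$ along a canonical traversal so that each consecutive pair $i, i+1$ either shares a row of $T_\alpha$ or has $i$ strictly above $i+1$. This yields $\opi_i \cdot T_\alpha \in \{0, -T_\alpha\}$ for every $i$, so $T_\alpha$ projects to a cyclic generator of a simple quotient $\bfF_\beta$, with $\set(\beta)$ read off from the descents of $T_\alpha$; a direct case analysis identifies $\beta = \alpha$. Next, I would show $T_\alpha$ generates $\bfP_\alpha$: at an ascent $i$ (that is, when $i$ lies strictly below $i+1$), the action of $\opi_i$ swaps $i$ and $i+1$, and by induction on the inversion count, any $T' \in \SRT(\alpha)$ is reachable from $T_\alpha$ through a sequence of such swaps. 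Finally, I would define a surjective $H_n(0)$-homomorphism $\calP_{\set(\alpha)} \twoheadrightarrow \bfP_\alpha$ sending Norton's generator $\opi_{w_0(I)}\pi_{w_0(I^\rmc)}$ to $T_\alpha$ (well-definedness follows from the descent pattern of $T_\alpha$), and conclude injectivity from the dimension equality $\dim \calP_{\set(\alpha)} = |\SRT(\alpha)|$, which matches the number of permutations in $\SG_n$ with descent composition $\alpha$.

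For part (b), write $\bal = \alpha^{(1)} \oplus \cdots \oplus \alpha^{(k)}$. I would construct an $H_n(0)$-equivariant bijection $\Phi : \SRT(\bal) \to \bigsqcup_{\beta \in [\bal]} \SRT(\beta)$ as follows. Given $T \in \SRT(\bal)$, each of the $k-1$ junctions between consecutive components admits a binary classification $\square_j \in \{\odot, \cdot\}$ determined by whether the labels of $T$ near the junction satisfy the extra column-increase constraint that a near-concatenation would impose. This associates to $T$ a unique $\beta = \alpha^{(1)} \square_1 \cdots \square_{k-1} \alpha^{(k)} \in [\bal]$, and $T$ reinterpreted on $\trd(\beta)$ remains a valid SRT of shape $\beta$. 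The bijection is $H_n(0)$-equivariant because the $\opi_i$-action depends only on the rows of $i$ and $i+1$, and those rows are preserved by the reinterpretation. Combining this bijection with part (a) yields $\bfP_\bal \cong \bigoplus_{\beta \in [\bal]} \bfP_\beta$.

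The hard step, in my view, lies in part (a): pinning down $T_\alpha$ so that its descent composition is exactly $\alpha$, and checking that the candidate map from $\calP_{\set(\alpha)}$ is well-defined, i.e.\ that the annihilator of Norton's generator is contained in the annihilator of $T_\alpha$. Once those two verifications are in place, surjectivity and the dimension count are routine, and part (b) reduces to setting up the combinatorial bijection $\Phi$ and observing that equivariance follows automatically from the purely row-dependent definition of the $\opi_i$-action.
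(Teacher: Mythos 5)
This result is cited from Huang \cite[Theorem 3.3]{16Huang}; the paper does not reprove it, so there is no in-paper argument to compare against and I am evaluating your outline on its own terms.

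Your argument for part (a) has a genuine gap: the tableau $T_\alpha$ you construct is the \emph{socle} element of $\bfP_\alpha$, not a cyclic generator, and the rest of the argument conflates the two. By construction every consecutive pair $i,i+1$ in $T_\alpha$ shares a row or has $i$ strictly above $i+1$ (this forces $T_\alpha$ to be the row-by-row, top-to-bottom filling), so indeed $\opi_i T_\alpha\in\{0,-T_\alpha\}$ for \emph{every} $i$. But that means $\C T_\alpha$ is a one-dimensional $H_n(0)$-\emph{submodule} and $H_n(0)\cdot T_\alpha=\C T_\alpha$; in particular $T_\alpha$ has no ascent, so your generation step ``at an ascent $i$, $\opi_i$ swaps $i$ and $i+1$'' has nothing to act on, and $T_\alpha$ generates $\bfP_\alpha$ only when $\dim\bfP_\alpha=1$. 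Moreover, when $\dim\bfP_\alpha>1$ the proper simple submodule $\C T_\alpha$ lies inside $\rad(\bfP_\alpha)$, so $T_\alpha$ maps to \emph{zero} in $\rmtop(\bfP_\alpha)$, not to a generator of a simple quotient. The descent set of this $T_\alpha$ is also $\set(\alpha^\rmr)$ rather than $\set(\alpha)$ under the paper's convention that the $i$th row of $\trd(\alpha)$ from the bottom has $\alpha_i$ boxes (check $\alpha=(2,1)$: you get $1$ on top and $2,3$ below, with $\opi_1$-descent at $\{1\}=\set((1,2))$).

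The correct generator is the tableau $T_0$ filled column by column from left to right, within each column from top to bottom — this is what the paper's phrase ``from top to bottom and from left to right'' must mean for $T_0$ to be cyclic, and you can verify it directly on $\alpha=(2,1)$, where this gives $2$ on top and $1,3$ below. For this $T_0$, $\opi_i T_0=-T_0$ precisely when $i\in\set(\alpha)$, $\opi_i T_0=0$ when $i,i+1$ share a row, and otherwise $\opi_i T_0=s_i\cdot T_0$ is a genuinely different tableau, which is how generation gets started. But then identifying $\rmtop(\bfP_\alpha)\cong\bfF_\alpha$ requires showing that each such ascent image $s_i\cdot T_0$ lies in $\rad(\bfP_\alpha)$; your sketch never addresses this, and it is the substantive content of the proof. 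Once that point is settled, the rest of the plan — lift the map $\calP_{\set(\alpha)}\twoheadrightarrow\bfF_\alpha$ through the epimorphism $\bfP_\alpha\twoheadrightarrow\rmtop(\bfP_\alpha)$, get surjectivity by Nakayama, and conclude by $\dim\calP_{\set(\alpha)}=|\SRT(\alpha)|$ — is sound, and the junction-by-junction bijection for part (b) is the right idea, though its $H_n(0)$-equivariance also deserves a more careful check near the junctions where the row of $i$ or $i+1$ changes under the reinterpretation. As written, however, part (a) is built on the wrong tableau.
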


Let us define another $H_n(0)$-action on the $\C$-span of $\SRT(\bal)$ as follows:
for $i \in [n-1]$ and $T \in \SRT(\bal)$,
\begin{align}\label{eq: def of bbfP action}
\pi_i \star T: = \begin{cases}
T, & \text{if $i$ is strictly above $i+1$ in $T$},\\
0, & \text{if $i$ and $i+1$ are in the same row of $T$},\\
s_i \cdot T, & \text{if $i$ is strictly below $i+1$ in $T$.}
\end{cases}
\end{align}
We denote the resulting $H_n(0)$-module by $\bbfP_\bal$.
One can easily see that for all $\alpha \models n$, 
$\bfP_\alpha$ is isomorphic to $\bbfP_{\alpha^\rmc}$ as an $H_n(0)$-module (for instance, see~\cite{20CKNO,16Huang,79Norton}).
The reason why we prefer $\bbfP_\alpha$ to $\bfP_\alpha$ is that the actions of $H_n(0)$-modules in our concern are described in terms of $\pi_i$'s. 

Let $T_0 \in \SRT(\bal)$ be the SRT obtained by filling $\trd(\bal)$
with entries $1, 2, \ldots, n$ from top to bottom and from left to right.
Then $\bfP_{\bal}$ and $\bbfP_{\bal}$ are cyclically generated by $T_0$.

\subsection{The $H_n(0)$-action on standard immaculate tableaux}
Noncommutative Bernstein operators were introduced by Berg {\it et al.}~\cite{14BBSSZ}.
Applied to the identity element, they yield the \emph{immaculate functions}, which form a basis of the the ring of noncommutative symmetric functions.
Soon after, using the combinatorial objects called standard immaculate tableaux, they constructed indecomposable $H_n(0)$-modules whose quasisymmetric characteristics are the quasisymmetric functions which are dual to immaculate functions (see~\cite{15BBSSZ}).

\begin{definition}\label{def: SIT}
Let $\alpha \models n$. A \emph{standard immaculate tableau} (SIT) of shape $\alpha$  is a filling $\calT$ of the composition diagram $\tcd(\alpha)$ with $\{1,2,\ldots,n\}$ such that the entries are all distinct, the entries in each row increase from left to right, and the entries in the first column increase from top to bottom.
\end{definition}

We denote the set of all standard immaculate tableaux of shape $\alpha$ by $\SIT(\alpha)$. 
Define an $H_n(0)$-action on the $\C$-span of $\SIT(\alpha)$ as follows: 
for each $i = 1,\ldots, n-1$ and $\calT \in \SIT(\alpha)$,  
\begin{align}\label{eq: action on SIT}
\pi_i \cdot \calT = \begin{cases}
\calT & \text{if $i$ is weakly below $i+1$ in $\calT$},\\
0 & \text{if $i$ and $i+1$ are in the first column of $\calT$},\\
s_i \cdot \calT & \text{otherwise}.
\end{cases}
\end{align}
The resulting module is denoted by $\calV_\alpha$.

Let $\calT_0 \in \SIT(\alpha)$ be the SIT obtained by filling $\tcd(\alpha)$
with entries $1, 2, \ldots, n$ from left to right and from top to bottom.

\begin{theorem}{\rm (\cite[Theorem 3.5]{15BBSSZ})}
For $\alpha \models n$, $\calV_\alpha$ is a cyclic indecomposable $H_n(0)$-module generated by $\calT_0$
whose quasisymmetric characteristic is the dual immaculate quasisymmetric function attached to $\alpha$.
\end{theorem}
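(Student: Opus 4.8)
The statement is due to Berg, Bergeron, Saliola, Serrano and Zabrocki \cite{15BBSSZ}; I sketch how I would organize a self-contained proof, which has four parts: checking that \eqref{eq: action on SIT} really defines an $H_n(0)$-action, that $\calT_0$ generates $\calV_\alpha$, that $\calV_\alpha$ is indecomposable, and that $\ch([\calV_\alpha])$ is the dual immaculate quasisymmetric function $\mathfrak{S}^{*}_{\alpha}$.

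For well-definedness, one first observes that in the third case of \eqref{eq: action on SIT} the filling $s_i\cdot\calT$ is again a standard immaculate tableau: if $i$ occupies $(r,c)$ and $i+1$ occupies $(r',c')$ with $r<r'$ and not both $c,c'$ equal to $1$, then every cell adjacent to these two cells carries a value different from $i$ and $i+1$ (the two values being consecutive), so row-monotonicity and the strict increase of the first column survive the swap. The relations $\pi_i^2=\pi_i$, the braid relation $\pi_i\pi_{i+1}\pi_i=\pi_{i+1}\pi_i\pi_{i+1}$, and far commutativity are then verified by a finite case analysis on the mutual positions of $i,i+1$ (and of $i+1,i+2$ for the braid relation) inside $\calT$; consecutiveness of the entries keeps the number of cases small. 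This case analysis is the most tedious portion of the argument, but it is entirely mechanical.

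For cyclicity, attach to each $\calT\in\SIT(\alpha)$ its row reading word $w(\calT)$, obtained by reading the rows left to right, top to bottom, and let $\mathrm{inv}$ count its inversions. If $\pi_i\cdot\calT=s_i\cdot\calT\neq\calT$, then $\mathrm{inv}(w(s_i\cdot\calT))=\mathrm{inv}(w(\calT))+1$: the swap alters only the pair of reading-positions holding $i$ and $i+1$, and since no integer lies strictly between them this produces exactly one new inversion while leaving all other pairs unchanged. Now $\calT_0$ is the unique SIT with $w(\calT_0)=\id$ (the condition ``row of $i\le$ row of $i+1$ for all $i$'', which characterizes $w(\calT)=\id$, forces the filling to be row-by-row), so by descending induction on $\mathrm{inv}$ every $\calT\neq\calT_0$ is of the form $\pi_i\cdot\calT'$ with $\mathrm{inv}(w(\calT'))<\mathrm{inv}(w(\calT))$: pick $i$ with $i$ strictly below $i+1$ in $\calT$ (such $i$ exists since $\calT\neq\calT_0$; these two cells are not both in the first column), take $\calT'=s_i\cdot\calT\in\SIT(\alpha)$, and note $\pi_i$ moves $\calT'$ back to $\calT$. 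Hence $\calV_\alpha=H_n(0)\cdot\calT_0$. As a byproduct, the colored digraph on $\SIT(\alpha)$ with an edge $\calT\to s_i\cdot\calT$ whenever $\pi_i$ moves $\calT$ is acyclic, $\mathrm{inv}$ strictly increasing along edges.

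For indecomposability, set $N:=\Span_\C\{\calT\in\SIT(\alpha):\calT\neq\calT_0\}$; I claim $N$ is a submodule. Since $\pi_i\cdot\calT\in\{0,\calT,s_i\cdot\calT\}$, it suffices to see $s_i\cdot\calT\neq\calT_0$ whenever $\pi_i$ moves $\calT\neq\calT_0$, and $s_i\cdot\calT=\calT_0$ would force $\calT=s_i\cdot\calT_0$, whence ``$i$ strictly above $i+1$ in $\calT$'' contradicts the fact that in $\calT_0$ the values $i,i+1$ lie either in a common row (so $s_i\cdot\calT_0$ is not an SIT) or with $i$ strictly above $i+1$ (so $i$ is strictly below $i+1$ in $s_i\cdot\calT_0$). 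Thus $N$ is a submodule of codimension one with $\calV_\alpha/N\cong\bfF_\alpha$, using that $\pi_i$ kills $\overline{\calT_0}$ exactly when $i$ ends a row of $\calT_0$, i.e. when $i\in\set(\alpha)$. Since $\calT_0$ generates $\calV_\alpha$, every proper submodule misses $\calT_0$ and hence lies in $N$, so $N$ is the unique maximal submodule and $\calV_\alpha$ is indecomposable. Finally, by acyclicity of the above digraph a standard filtration argument (choose a linear extension $\calT^{(1)},\dots,\calT^{(N)}$ and take $M_t=\Span\{\calT^{(t+1)},\dots,\calT^{(N)}\}$) produces a chain of submodules whose successive quotients are the $\bfF_{\comp(D(\calT))}$, where $D(\calT):=\{i:i\text{ lies strictly above }i+1\text{ in }\calT\}$ is precisely the set of $i$ with $\pi_i\cdot\calT\neq\calT$; hence
\[
\ch([\calV_\alpha])=\sum_{\calT\in\SIT(\alpha)}F_{\comp(D(\calT))},
\]
which is the fundamental-basis expansion of $\mathfrak{S}^{*}_{\alpha}$ established in \cite{14BBSSZ,15BBSSZ} (equivalently, obtained from the monomial definition $\mathfrak{S}^{*}_{\alpha}=\sum_T x^{\wt(T)}$ over semistandard immaculate tableaux by standardization). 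The only point of genuine care beyond the braid-relation case check is matching $D(\calT)$ with the descent statistic in the combinatorial definition of $\mathfrak{S}^{*}_{\alpha}$.
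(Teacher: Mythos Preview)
The paper does not prove this theorem; it is quoted as a result of Berg, Bergeron, Saliola, Serrano and Zabrocki \cite{15BBSSZ} and stated without proof. Your sketch is correct and follows essentially the argument in the original reference: verify the $H_n(0)$-relations by case analysis, establish cyclicity via an inversion statistic that increases along nontrivial $\pi_i$-moves, deduce indecomposability from the fact that the span of $\SIT(\alpha)\setminus\{\calT_0\}$ is the unique maximal submodule, and compute the characteristic from the resulting composition series. One small point worth tightening: in your indecomposability paragraph you assert that $\pi_i$ kills $\overline{\calT_0}$ exactly when $i\in\set(\alpha)$, but you should note that when $i\in\set(\alpha)$ and $\alpha_k>1$ the action gives $s_i\cdot\calT_0\in N$ rather than $0$, which is still zero in the quotient; your phrasing slightly conflates ``$\pi_i\cdot\calT_0=0$'' with ``$\pi_i\cdot\overline{\calT_0}=0$''.
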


\subsection{The $H_n(0)$-action on standard extended tableaux}\label{subsec: SET}

In~\cite{19AS}, Assaf and Searles defined the extended Schur functions as the stable limits of lock polynomials and showed that they form a basis of $\Qsym$.
Soon after, using standard extended tableaux, Searles~\cite{19Searles} constructed indecomposable $H_n(0)$-modules whose quasisymmetric characteristics are the extended Schur functions.

\begin{definition}
Given $\alpha \models n$, a \emph{standard extended tableau} (SET) of shape $\alpha$ is a filling $\sfT$ of the reverse composition diagram $\trcd(\alpha)$ with $\{1,2,\ldots,n\}$ such that the entries are all distinct, the entries in each row increase from left to right, and the entries in each column decrease from top to bottom.
\end{definition}

We denote the set of all standard extended tableaux of shape $\alpha$ by $\SET(\alpha)$. 
Define an $H_n(0)$-action on the $\C$-span of $\SET(\alpha)$ as follows: 
for each $i = 1,\ldots, n-1$ and $\sfT \in \SET(\alpha)$,
\begin{align}
\pi_i \cdot \sfT = \begin{cases}
\sfT & \text{if $i$ is strictly left of $i+1$ in $\sfT$},\\
0 & \text{if $i$ and $i+1$ are in the same column of $\sfT$},\\
s_i \cdot \sfT & \text{if $i$ is strictly right of $i+1$ in $\sfT$}.
\end{cases}
\end{align}
The resulting module is denoted by $X_\alpha$.

Let $\sfT_0 \in \SET(\alpha)$ be the SET obtained by filling $\trcd(\alpha)$
with entries $1, 2, \ldots, n$ from left to right and from bottom to top.

\begin{theorem}{\rm (\cite[Theorem 3.10]{19Searles})}
For $\alpha \models n$, $X_\alpha$ is a cyclic indecomposable $H_n(0)$-module generated by $\sfT_0$
whose quasisymmetric characteristic is the extended Schur function attached to $\alpha$.
\end{theorem}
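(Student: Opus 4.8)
The plan is to establish the four assertions packaged into the statement in turn: (a) the displayed formulas define an $H_n(0)$-action on $X_\alpha$; (b) $\ch([X_\alpha])$ is the extended Schur function attached to $\alpha$; (c) $X_\alpha=H_n(0)\cdot\sfT_0$; and (d) $X_\alpha$ is indecomposable. For $\sfT\in\SET(\alpha)$ and $1\le j\le n$, write $c_j(\sfT)$ for the column of $j$ in $\sfT$. For (a) the one substantive point is a local lemma I would prove first: if $i$ lies strictly to the right of $i+1$ in an SET $\sfT$ (that is, $c_i(\sfT)>c_{i+1}(\sfT)$), then $i$ and $i+1$ lie in distinct rows and distinct columns and $s_i\cdot\sfT$ is again an SET. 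This is a short check using only that rows strictly increase to the right and columns strictly decrease downward, and it keeps the ``swap'' branch of the action inside $\SET(\alpha)$. Granting it, $\pi_i^2=\pi_i$ and $\pi_i\pi_j=\pi_j\pi_i$ for $|i-j|\ge 2$ follow at once, since $\pi_i$ sees only the relative horizontal positions of $i$ and $i+1$ and for $|i-j|\ge 2$ a swap of one pair does not affect the other (disjoint entries); the braid relation $\pi_i\pi_{i+1}\pi_i=\pi_{i+1}\pi_i\pi_{i+1}$ reduces to a finite case analysis on the relative columns of $i,i+1,i+2$ applied to an arbitrary $\sfT$.

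For (b), set $\Des(\sfT):=\{\,i\in[n-1]:c_i(\sfT)\ge c_{i+1}(\sfT)\,\}$, so that $\pi_i\sfT=\sfT$ precisely when $i\notin\Des(\sfT)$ while $\pi_i\sfT\in\{0\}\cup\SET(\alpha)$ for $i\in\Des(\sfT)$, and introduce the statistic $N(\sfT):=\sum_{j=1}^{n}j\,c_j(\sfT)$, which obeys $N(s_i\cdot\sfT)=N(\sfT)+\bigl(c_i(\sfT)-c_{i+1}(\sfT)\bigr)>N(\sfT)$ whenever $\pi_i\sfT=s_i\cdot\sfT\ne\sfT$. Listing $\SET(\alpha)$ as $\sfT_{(1)},\sfT_{(2)},\dots$ in order of weakly decreasing $N$ (ties broken arbitrarily), the spans $\Span_\C\{\sfT_{(1)},\dots,\sfT_{(j)}\}$ form an $H_n(0)$-filtration of $X_\alpha$ whose $j$-th subquotient is $\bfF_{\comp(\Des(\sfT_{(j)}))}$, because a descent of $\sfT_{(j)}$ that does not annihilate it moves it strictly up the list. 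Hence $\ch([X_\alpha])=\sum_{\sfT\in\SET(\alpha)}F_{\comp(\Des(\sfT))}$, which is the fundamental-quasisymmetric expansion of the extended Schur function attached to $\alpha$ obtained by standardizing the semistandard tableau model of~\cite{19AS}; this gives (b).

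For (c), I would first check that $\sfT_0$ is the only SET admitting no index $i$ with $i$ strictly left of $i+1$ and in a row different from that of $i+1$: the column- and row-monotonicity constraints force $1$ into the bottom-left cell and then force each successive $j+1$ either to continue the current row or to occupy the column-$1$ cell of the row immediately above, which is exactly $\sfT_0$. Consequently every $\sfT\ne\sfT_0$ admits such an index $i$, so $s_i\cdot\sfT\in\SET(\alpha)$ with $N(s_i\cdot\sfT)<N(\sfT)$ and $\pi_i(s_i\cdot\sfT)=\sfT$; induction on $N$ then gives $\sfT\in H_n(0)\cdot\sfT_0$, hence $X_\alpha=H_n(0)\cdot\sfT_0$.

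Finally, for (d) I would show $\dim\bigl(X_\alpha/\rad X_\alpha\bigr)=1$; combined with cyclicity this makes $\rad X_\alpha$ the unique maximal submodule, so $X_\alpha$ cannot decompose as a direct sum of two nonzero submodules. Since $\opi_i M\subseteq\rad M$ for any $H_n(0)$-module $M$, it suffices to put every $\sfT\ne\sfT_0$ into $\rad X_\alpha$, which I would do by downward induction on $N$: if some $i$ has $c_i(\sfT)=c_{i+1}(\sfT)$ then $\opi_i\sfT=-\sfT\in\rad X_\alpha$; otherwise $\sfT\ne\sfT_0$ forces $\alpha\ne(n)$ (else the entries sit in strictly increasing columns, forcing a single row and $\sfT=\sfT_0$), so there is $i$ with $i$ strictly right of $i+1$, whence $s_i\cdot\sfT$ has larger $N$, lies in $\rad X_\alpha$ by induction, and $\sfT=s_i\cdot\sfT-\opi_i\sfT\in\rad X_\alpha$; the case $\alpha=(n)$ is trivial since $\dim X_{(n)}=1$. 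As $\comp(\Des(\sfT_0))=\alpha$, the top is $\bfF_\alpha$. I expect the real work to sit in (a) — the internality lemma for swaps together with the braid-relation case check — after which (b), (c) and (d) are all driven by the single monotone statistic $N$; a further point needing care is the identification of $\sum_{\sfT}F_{\comp(\Des(\sfT))}$ with the extended Schur function, i.e.\ knowing its fundamental expansion.
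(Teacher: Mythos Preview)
The paper does not prove this statement; it is quoted from \cite{19Searles} as background, with no argument supplied. So there is no proof in the paper to compare against, and I assess your outline on its own.

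Parts (a), (b), (c) are sound. The swap--closure lemma in (a) is the only real content and your sketch is correct (distinct entries force strict inequalities on all neighbours of $i,i+1$, so the swap preserves row-- and column--monotonicity); the $N$-filtration in (b) gives the right $F$-expansion; and the source characterisation of $\sfT_0$ in (c), combined with the $N$-decreasing step, yields cyclicity.

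Part (d), however, rests on a false premise. The claim ``$\opi_i M\subseteq\rad M$ for any $H_n(0)$-module $M$'' fails already for the irreducible $M=\bfF_\beta$ with $i\in\set(\beta)$: there $\pi_i v=0$, so $\opi_i v=-v$ and $\opi_i M=M$, while $\rad M=0$. Both branches of your downward induction invoke this claim (once as $-\sfT=\opi_i\sfT\in\rad X_\alpha$ when $c_i=c_{i+1}$, once inside $\sfT=s_i\cdot\sfT-\opi_i\sfT$), so neither goes through. The standard repair is to strengthen your source characterisation slightly: show that $\sfT_0$ is the \emph{unique} SET with $\Des(\sfT)\subseteq\set(\alpha)$, i.e.\ the unique SET fixed by $\pi_i$ for every $i\notin\set(\alpha)$ (the proof is the same left-to-right, bottom-to-top forcing you sketch in (c)). Given this, any simple quotient $X_\alpha\twoheadrightarrow\bfF_\gamma$ sends the generator $\sfT_0$ to a nonzero vector fixed by all such $\pi_i$, whence $\set(\gamma)\subseteq\set(\alpha)$; but $\bfF_\gamma$ is also a composition factor of $X_\alpha$, so $\set(\gamma)=\Des(\sfT)$ for some SET $\sfT$, and the uniqueness lemma forces $\sfT=\sfT_0$, $\gamma=\alpha$. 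Hence $\rmtop(X_\alpha)\cong\bfF_\alpha$ is one-dimensional, and indecomposability follows.
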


\subsection{The $H_n(0)$-action on standard permuted composition tableaux}\label{subsec: SPCT}
Standard permuted composition tableaux were introduced and studied intensively by Tewari and van Willigenburg~\cite{15TW, 19TW}. 
\begin{definition}\label{def: PCT}
Given $\alpha \models n$ and $\sigma \in \SG_{\ell(\alpha)}$, a \emph{standard permuted composition tableau} ($\SPCT$) of shape $\alpha$ and type $\sigma$ is a filling $\tau$ of $\tcd(\alpha)$ with entries in $\{1,2,\ldots,n\}$ such that the following conditions hold:
\begin{enumerate}[label = {\rm (\arabic*)}]
\item The entries are all distinct.
\item The standardization of the word obtained by reading the first column from top to bottom is $\sigma$.
\item The entries along the rows decrease weakly when read from left to right.
\item If $i<j$ and $\tau_{i,k} > \tau_{j,k+1}$, then $(i,k+1) \in \tcd(\alpha)$ and $\tau_{i,k+1} > \tau_{j,k+1}$.
\end{enumerate}
The condition (4) is called the \emph{triple condition}.
\end{definition}

We denote by $\SPCT^\sigma(\alpha)$ the set of all standard permuted composition tableau of shape $\alpha$ and type $\sigma$. 
For $\alpha \models n$ and $\sigma \in \SG_{\ell(\alpha)}$, we say $\alpha$ is compatible with $\sigma$ if $\alpha_i \ge \alpha_j$ for all $i < j$ with $\sigma(i) > \sigma(j)$.

\begin{proposition} {\rm (\cite[Proposition 14]{13HMR})}\label{Prop: comparibility}
For a composition $\alpha$ and $\sigma \in \SG_{\ell(\alpha)}$, $\SPCTsa$ is nonempty if and only if $\alpha$ is compatible with $\sigma$.
\end{proposition}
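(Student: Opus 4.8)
The plan is to prove the two implications separately. The ``only if'' direction is a short propagation argument built on the triple condition, while the ``if'' direction requires exhibiting one explicit element of $\SPCTsa$, which I would do by induction on $|\alpha|$.

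For the forward direction, suppose $\tau \in \SPCTsa$ and fix $i < j$ with $\sigma(i) > \sigma(j)$. Since every part of $\alpha$ is positive, the first column of $\tcd(\alpha)$ has exactly $\ell(\alpha)$ entries $\tau_{1,1}, \dots, \tau_{\ell(\alpha),1}$, and condition (2) forces $\tau_{i,1} > \tau_{j,1}$ whenever $\sigma(i) > \sigma(j)$. I would then show by induction on $k$ that $(i,k) \in \tcd(\alpha)$ and $\tau_{i,k} > \tau_{j,k}$ for every $k$ with $(j,k) \in \tcd(\alpha)$: the case $k = 1$ is what we just observed, and for the step, if $(j,k+1) \in \tcd(\alpha)$ then condition (3) together with distinctness of entries gives $\tau_{j,k} > \tau_{j,k+1}$, hence $\tau_{i,k} > \tau_{j,k+1}$, and the triple condition (4) yields $(i,k+1) \in \tcd(\alpha)$ and $\tau_{i,k+1} > \tau_{j,k+1}$. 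Specializing to $k = \alpha_j$ gives $(i,\alpha_j) \in \tcd(\alpha)$, i.e. $\alpha_i \ge \alpha_j$, so $\alpha$ is compatible with $\sigma$.

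For the backward direction, assume $\alpha$ is compatible with $\sigma$ and induct on $n = |\alpha|$, the case $n = 0$ being trivial. Put $i_0 := \sigma^{-1}(\ell(\alpha))$, the row whose first-column entry must be largest, let $\widetilde{\alpha}$ be the composition of $n - \alpha_{i_0}$ obtained by deleting row $i_0$, and let $\widetilde{\sigma} \in \SG_{\ell(\alpha)-1}$ be the standardization of the sequence $(\sigma(i))_{i \ne i_0}$; since $\sigma(i_0)$ is the maximum value, $\widetilde\sigma$ is order-isomorphic to $\sigma$ on $[\ell(\alpha)] \setminus \{i_0\}$, and one checks that compatibility passes from $(\alpha,\sigma)$ to $(\widetilde\alpha,\widetilde\sigma)$. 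By induction there is $\widetilde\tau \in \SPCT^{\widetilde\sigma}(\widetilde\alpha)$, a filling by $\{1,\dots,n-\alpha_{i_0}\}$. I would then define $\tau$ on $\tcd(\alpha)$ by retaining $\widetilde\tau$ on all rows $\ne i_0$ and filling row $i_0$ from left to right with $n, n-1, \dots, n-\alpha_{i_0}+1$. Conditions (1), (2), (3) are immediate, and one verifies that $\tau$ is the required element.

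The step I expect to be the main obstacle is exactly this last verification of the triple condition for the reinserted row: one must confirm that the new block $n, n-1, \dots$ creates no violation among the old rows (where the triples are inherited from $\widetilde\tau$, up to a shift of row indices) and none between an old row $i$ and row $i_0$. Because every entry of row $i_0$ exceeds every old entry, the only triples that can arise have $i = i_0 < j$; there, the hypothesis $\tau_{i_0,k} > \tau_{j,k+1}$ holds automatically, so one needs $(i_0,k+1) \in \tcd(\alpha)$ whenever $(j,k+1) \in \tcd(\alpha)$, and this is precisely guaranteed by $\alpha_j \le \alpha_{i_0}$ for all $j > i_0$ --- i.e. compatibility applied to the pairs $(i_0,j)$, using $\sigma(i_0) = \ell(\alpha) > \sigma(j)$ --- while the accompanying inequality $\tau_{i_0,k+1} > n - \alpha_{i_0} \ge \tau_{j,k+1}$ is clear. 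Everything else reduces to routine bookkeeping about how deletion and reinsertion of a row interacts with standardization of the first column.
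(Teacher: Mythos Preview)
The paper does not supply its own proof of this proposition: it is quoted from \cite{13HMR} and stated without argument. So there is nothing in the paper to compare your proof against directly.

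That said, your argument is correct in both directions. The forward direction is exactly the standard propagation via the triple condition, and your inductive step is clean. For the backward direction, note that your construction, once the induction is unrolled, produces precisely the canonical source tableau $\tauC$ that the paper introduces immediately after the proposition: row $\sigma^{-1}(k)$ receives the top block of $\alpha_{\sigma^{-1}(k)}$ largest entries, row $\sigma^{-1}(k-1)$ the next block, and so on. The paper asserts $\tauC \in \SPCTsa$ without verification; your inductive framing supplies that verification, and in particular your handling of the triple condition is complete --- you correctly observe that triples with $j = i_0$ are vacuous (since every entry of row $i_0$ dominates the rest) and that triples with $i = i_0 < j$ are handled by compatibility applied to the pair $(i_0,j)$.
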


Let $\tau \in \SPCTsa$. An integer $1 \le i \le n-1$ is a \emph{descent} of $\tau$ if $i+1$ lies weakly right of $i$ in $\tau$. Denote by $\Des(\tau)$ the set of all descents of $\tau$ and set $\comp(\tau) := \comp(\Des(\tau))$.
And, for $1 \leq i < j \leq n$, we say that $i$ and $j$ are \emph{attacking} (in $\tau$)
if either
\begin{enumerate}[label = {\rm (\roman*)}]
\item $i$ and $j$ are in the same column in $\tau$, or
\item $i$ and $j$ are in adjacent columns in $\tau$, with $j$ positioned lower-right of $i$.
\end{enumerate}
In case where $i$ and $i+1$ are attacking (resp. nonattacking) and $i$ is a descent of $\tau$, we simply say that $i$ is an \emph{attacking descent} (resp. \emph{nonattacking descent}).

Define an $H_n(0)$-action on the $\C$-span of $\SPCTsa$ as follows: 
for each $i = 1,\ldots, n-1$ and $\tau \in \SPCTsa$,
\begin{align}\label{eq: Hecke action on SPCT}
\pi_i \cdot \tau = \begin{cases}
\tau & \text{if $i$ is not a descent},\\
0 & \text{if $i$ is an attacking descent},\\
s_i \cdot \tau & \text{if $i$ is a nonattacking descent.}
\end{cases}
\end{align}
The resulting module is denoted by $\bfSsa$.

\begin{theorem}{\rm (\cite[Theorem 3.1]{19TW})}
\label{thm: Hecke action on SPCT}
Given $\alpha \models n$, \eqref{eq: Hecke action on SPCT} induces an $H_n(0)$-action on $\bfSsa$.
\end{theorem}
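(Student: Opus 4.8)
The plan is to verify directly that the operators $\pi_i$ defined by~\eqref{eq: Hecke action on SPCT} satisfy the defining relations of $H_n(0)$: namely $\pi_i^2 = \pi_i$, the braid relation $\pi_i \pi_{i+1} \pi_i = \pi_{i+1}\pi_i\pi_{i+1}$, and the commutation $\pi_i\pi_j = \pi_j\pi_i$ when $|i-j|\ge 2$. The main point that must be checked along the way is that the rule~\eqref{eq: Hecke action on SPCT} is well-defined, i.e. that whenever $i$ is a nonattacking descent of $\tau \in \SPCTsa$, the tableau $s_i\cdot\tau$ obtained by swapping $i$ and $i+1$ again lies in $\SPCTsa$. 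This closure statement is what makes $\bfSsa$ an $H_n(0)$-module rather than merely a vector space with partially defined operators.

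First I would establish the closure claim. Swapping $i$ and $i+1$ obviously preserves conditions (1) and (2) of Definition~\ref{def: PCT}, since these only depend on the set of entries and (for (2)) on the relative order of the first-column entries, and $i$ being a nonattacking descent forces $i$ and $i+1$ not to be in the same column, in particular not both in the first column (and if one of them is in the first column, the other lies strictly to its right in a later column, so the standardization of the first-column word is unchanged). For condition (3), the only potential violation occurs at a box adjacent to the cell containing $i$ or $i+1$; one checks case by case—using that $i+1$ lies weakly right of $i$ (descent) and that they are nonattacking—that no row inversion is created. The delicate part is the triple condition (4): one must verify that swapping $i$ and $i+1$ does not destroy any existing triple and does not create a new violated one. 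This is the heart of the argument and is handled by a finite case analysis on the relative positions of the cells containing $i$, $i+1$ and the third cell involved in a triple, repeatedly invoking that $i$ and $i+1$ are nonattacking. I expect this triple-condition bookkeeping to be the main obstacle; everything else is comparatively mechanical.

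Next, granting closure, I would check $\pi_i^2 = \pi_i$. If $i$ is not a descent of $\tau$, then $\pi_i\tau = \tau$ and there is nothing to do. If $i$ is an attacking descent, $\pi_i\tau = 0$ and again $\pi_i^2\tau = 0 = \pi_i\tau$. If $i$ is a nonattacking descent, then $\pi_i\tau = s_i\cdot\tau$, and the key observation is that in $s_i\cdot\tau$ the value $i+1$ now sits where $i$ was (weakly left of where $i+1$ was), so $i$ is \emph{not} a descent of $s_i\cdot\tau$; hence $\pi_i(s_i\cdot\tau) = s_i\cdot\tau$, giving $\pi_i^2\tau = \pi_i\tau$. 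The commutation relation $\pi_i\pi_j = \pi_j\pi_i$ for $|i-j|\ge 2$ is straightforward: the cells involved in the status of $i$ (cells containing $i, i+1$) are disjoint from those involved in the status of $j$ (cells containing $j, j+1$), so the two operators act on "independent" parts of the tableau and the relevant descent/attacking data for one is unaffected by applying the other; a short check of the handful of combined cases suffices.

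Finally, for the braid relation $\pi_i\pi_{i+1}\pi_i = \pi_{i+1}\pi_i\pi_{i+1}$, I would argue as in~\cite{15TW}: one examines the relative positions of the cells containing $i$, $i+1$, $i+2$ in $\tau$. These three entries occupy three cells, and the possible configurations (which of $i,i+1$ is a descent, whether descents are attacking, whether swaps are legal) fall into finitely many cases. In each case one computes both sides of the braid relation explicitly and checks they agree; the nontrivial cases are those where both $\pi_i$ and $\pi_{i+1}$ act by genuine swaps, and there the verification reduces to the identity $s_i s_{i+1} s_i = s_{i+1} s_i s_{i+1}$ in $\SG_n$ together with the already-established closure of $\SPCTsa$ under legal swaps, which guarantees each intermediate tableau lies in $\SPCTsa$ so that the subsequent operator is defined. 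Since the $\pi_i$ are linear by construction and $\SPCTsa$ is a basis of $\bfSsa$, verifying the relations on basis elements is enough, and the theorem follows.
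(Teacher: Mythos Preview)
The paper does not prove this theorem; it is stated in the Preliminaries section with a citation to \cite[Theorem~3.1]{19TW} and no proof is given. Your outline is the standard verification carried out in the cited reference (and, for $\sigma=\id$, in \cite{15TW}): first show that a nonattacking swap $s_i\cdot\tau$ stays in $\SPCTsa$ (the triple-condition case analysis you flag is indeed the substantive part), and then check the quadratic, commutation, and braid relations on basis tableaux by a finite case analysis on the positions of the entries $i,i+1$ (and $i+2$). So there is nothing to compare against in this paper, and your plan matches the approach of the original source.
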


Let $\alpha$ be a composition whose largest part is $\alphamax$ and $\tau$ an SPCT of shape $\alpha$ and type $\sigma$. 
For $1 \le i \le \alphamax$, we define the \emph{$i$th column word} $w^i(\tau)$ of $\tau$ to be the word obtained from $\tau$ by reading the entries in the $i$th column from top to bottom. 
The \emph{standardized $i$th column word} of $\tau$, denoted by $\rmst(w^i(\tau))$, is the permutation $\rho \in \SG_{\ell(w^i(\tau))}$ uniquely determined by the condition: for $1 \le j < j' \le \ell(w^i(\tau))$,
\begin{align*}
\rho(j) > \rho(j') \quad \text{if and only if} \quad  w^i(\tau)_{j} > w^i(\tau)_{j'}.
\end{align*}
 Here $\ell(w^i(\tau))$ is the length of the word $w^i(\tau)$. The \emph{standardized column word} of $\tau$, denoted by $\rmst(\tau)$, is the word
\[
\rmst(\tau) = \rmst(w^1(\tau)) \  \rmst(w^2(\tau)) \cdots \rmst(w^{\alphamax}(\tau)).
\]

Recall that the equivalence relation $\sim_{\alpha}$ on $\SPCTsa$ defined by
\[
\tau_1 \sim_{\alpha} \tau_2 \quad \text{if and only if} \quad \rmst(\tau_1) = \rmst(\tau_2)\quad \text{for $\tau_1,\tau_2 \in \SPCTsa$}
\]
was introduced in~\cite[Section 3]{19TW}.
Let $\mathcal{E}^\sigma(\alpha)$ be the set of all equivalence classes under $\sim_{\alpha}$.
For each $E \in \calEsa$, let $\bfS^{\sigma}_{\alpha,E}$ be the $H_n(0)$-submodule of $\bfSsa$ whose underlying space is the $\C$-span of $E$. As $H_n(0)$ modules,
\begin{align*}
\bfSsa \cong \bigoplus_{E \in \calEsa} \bfS^{\sigma}_{\alpha,E}.
\end{align*}
An $\SPCT$ $\tau$ is said to be a \emph{source tableau} if, for every $i \notin \Des(\tau)$ where $i \neq n$, we have that $i+1$ lies to the immediate left of $i$.
In particular, for every composition $\alpha = (\alpha_1,\ldots, \alpha_{k})$ and $\sigma \in \SG_k$,
one can construct a source tableau in $\SPCTsa$ in the following way:
For $1 \leq i \leq k$,
fill the $\alpha_{\sigma^{-1}(i)}$th row with
\[
\sum_{j = 0}^{i-1} \alpha_{\sigma^{-1}(j)} + 1 \, ,  \sum_{j = 0}^{i-1} \alpha_{\sigma^{-1}(j)} + 2 \, , \ldots \, , \sum_{j = 0}^{i-1} \alpha_{\sigma^{-1}(j)} + \alpha_{\sigma^{-1}(i)}
\]
in the decreasing order from left to right.
Here $\alpha_{\sigma^{-1}(0)}$ is set to be 0.
The tableau defined as above is called \emph{the canonical source tableau of shape $\alpha$ and type $\sigma$}, and is denoted by $\tauC$.
We call the class containing $\tauC$, denoted by $C$, the \emph{canonical class} in $\mathcal{E}^\sigma(\alpha)$, and ${\bfS}^\sigma_{\alpha,C}$ the \emph{canonical submodule} of $\bfSsa$.

\begin{theorem}{\rm (\cite{20CKNO,19TW})}\label{lem: unique source and sink}
For $E \in \mathcal{E}^{\sigma}(\alpha)$, there is a unique source tableau $\tauE$ in $E$ and $\bfS^\sigma_{\alpha,E}$ is a cyclic indecomposable $H_n(0)$-module generated by $\tauE$.
\end{theorem}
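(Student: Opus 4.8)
The plan is to prove the three assertions---existence of a source tableau in $E$, its uniqueness, and cyclicity of $\bfS^\sigma_{\alpha,E}$ with that tableau as generator---in that order, leveraging the colored-graph structure on $\SPCTsa$ induced by the $H_n(0)$-action in~\eqref{eq: Hecke action on SPCT}. First I would make precise the directed graph $\calG^\sigma(\alpha)$ whose vertices are the elements of $\SPCTsa$ and whose edges $\tau \to s_i\cdot\tau$ are drawn whenever $i$ is a nonattacking descent of $\tau$ (so that $\pi_i\cdot\tau = s_i\cdot\tau$), with edge color $i$. The connected components of the underlying undirected graph are exactly the classes $E \in \calEsa$: one checks that applying a generator $\pi_i$ either fixes $\tau$, kills it, or moves it to another tableau with the \emph{same} standardized column word $\rmst(\tau)$, and conversely that any two tableaux with the same $\rmst$ are joined by a chain of such moves. (This is the content of~\cite[Section 3]{19TW}; I would cite it rather than reprove it.) A \emph{source tableau} is then precisely a vertex with no incoming edge in the directed structure within its component, i.e.\ one that is not of the form $s_i\cdot\tau'$ for any nonattacking descent $i$ of $\tau'$---equivalently, by the definition recalled just before the theorem, one in which $i+1$ lies immediately left of $i$ whenever $i \notin \Des(\tau)$, $i\neq n$.

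For existence, I would argue by a descent-counting / well-foundedness argument: define on $E$ a partial order by $\tau \prec s_i\cdot\tau$ whenever $i$ is a nonattacking descent of $\tau$, and show this has no infinite ascending chain---for instance because each such move strictly decreases a statistic like the number of inversions of the standardized reading word, or the sum of the entries' row indices, so any $\tau\in E$ can be brought down to a minimal element, which is a source. Uniqueness is the crux and the step I expect to be the main obstacle: I must show a component $E$ cannot contain two distinct sources. The natural route is to show that from \emph{any} $\tau\in E$ there is a canonical ``straightening'' path down to a source, and that this source is independent of the path---i.e.\ a confluence (diamond lemma) argument on the colored graph. Concretely, if $i$ and $j$ are both nonattacking descents of $\tau$ witnessing downward moves, I would verify that the two resulting tableaux can be brought to a common tableau by further downward moves, handling the cases $|i-j|\ge 2$ (the moves commute, since swapping $i\leftrightarrow i+1$ and $j\leftrightarrow j+1$ act on disjoint entries and do not interfere with descent/attacking status) and $|i-j|=1$ (a braid-type local analysis, using the triple condition to control where the relevant entries sit) separately. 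Local confluence plus termination then gives global confluence, hence a unique minimal element; this unique minimal element is the unique source.

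Finally, for cyclicity: given the unique source $\tauE$, I would show $H_n(0)\cdot\tauE = \C\text{-span}(E)$. Since $\bfS^\sigma_{\alpha,E}$ is a direct summand of $\bfSsa$ and the latter is finite-dimensional, it suffices to show every $\tau\in E$ is reachable from $\tauE$ by applying generators $\pi_{i_1},\dots,\pi_{i_r}$, up to a nonzero scalar. Because $\tauE$ is a source, for any $\tau\in E$ one takes a path in the connected graph from $\tauE$ to $\tau$; one must choose the path so that it only ever \emph{ascends} (uses moves $\pi_i$ with $\pi_i\cdot(\text{current}) = s_i\cdot(\text{current})\neq$ current), which is possible precisely because $\tauE$ is minimal in the partial order and---again by the confluence/termination structure---every vertex of $E$ lies above the unique source. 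Along such an ascending path each $\pi_{i_t}$ acts by the permutation action (never by $0$ or by the identity), so the composite sends $\tauE$ to $\tau$ exactly. Hence $\tau\in H_n(0)\cdot\tauE$ for all $\tau\in E$, giving cyclicity; indecomposability is already asserted in~\cite{20CKNO} and may be quoted. The one genuinely delicate point throughout is the braid-case local confluence, where the triple condition must be invoked carefully to rule out configurations that would otherwise break the diamond property.
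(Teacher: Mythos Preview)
The paper does not supply its own proof of this theorem: it is stated in the preliminaries with the citation \cite{20CKNO,19TW} and used as a black box throughout. So there is no proof in the paper to compare against; your proposal is an attempt to reconstruct what is established in those references.

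Your overall architecture---a poset on $E$, termination to get existence of a minimal element, local confluence to get uniqueness, and then cyclicity by tracing an ascending path from the unique source---is a legitimate route, but there is a directional slip you would need to fix before the argument goes through. You define $\tau \prec s_i\cdot\tau$ when $i$ is a nonattacking descent of $\tau$, so applying $\pi_i$ moves \emph{up}; a source is then a \emph{minimal} element, and ``going down'' means reversing an edge, i.e.\ passing from $\tau$ to $s_i\cdot\tau$ where $i$ is \emph{not} a descent of $\tau$ but $i+1$ does not sit immediately left of $i$. Your confluence paragraph, however, speaks of ``$i$ and $j$ both nonattacking descents of $\tau$ witnessing downward moves,'' which describes upward moves from $\tau$, not downward ones. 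The diamond lemma you actually need is for the reverse moves, and the local analysis (especially in the $|i-j|=1$ case) must be carried out on \emph{those}. This is repairable, but as written the confluence step is set up on the wrong side.

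For context, the arguments in \cite{15TW,19TW} for uniqueness of the source are more directly combinatorial than an abstract diamond lemma: one uses the explicit characterization of source tableaux (that $i+1$ lies immediately left of $i$ whenever $i\notin\Des(\tau)$) together with the fact that all tableaux in $E$ share the same standardized column word to pin down the entries column by column. That approach avoids the braid-case confluence analysis you anticipate as delicate. The indecomposability assertion, as you note, is proved in \cite{20CKNO} (following the method of \cite{19Konig}) and is not something one would re-derive here.
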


It was shown in~\cite{15TW} that the image of the quasisymmetric characteristic of $\bfS^{\id}_\alpha$ is the \emph{quasisymmetric Schur function} $\calS_\alpha$.
Very recently, the quasi-Schur expansion of the image of the quasisymmetric characteristic of $\bfS^{\sigma}_\alpha$ was given in~\cite{20CKNO}.

\section{A series of surjections starting from $\bfP_{\alpha}$}
\label{Section3}

To begin with, we recall the notion of a projective cover.
Let $R$ be a left artin ring and $A,B$ be finitely generated $R$-modules.
An epimorphism $f:A\to B$ is called an \emph{essential epimorphism} if a morphism $g: X\to A$ is an epimorphism 
whenever $f \circ g:X\to B$ is an epimorphism. 
A \emph{projective cover} of $A$ is an essential epimorphism $f:P\to A$ with $P$ a projective $R$-module, 
which always exists and is unique up to isomorphism. 
It plays an extremely important role in understanding the structure of $A$ (see~\cite{95ARS}).

The purpose of this section is to demonstrate the relationship among $\calV_\alpha$,
$X_\alpha$, and $\bfS^\sigma_{\alpha,C}$. 
To be precise, we construct a series of surjections
\begin{equation}\label{eq: surj series 1}
\begin{tikzcd}
\bfP_{\alpha} \arrow[r, two heads] & 
\calV_\alpha \arrow[r, two heads] & 
X_\alpha \arrow[r, two heads] & 
\mphi[\bfS^\sigma_{\tal,C}],
\end{tikzcd}
\end{equation}
which implies that $\calV_\alpha$, $X_\alpha$, and $\mphi[\bfS^\sigma_{\tal,C}]$ have the same projective cover.
Here $\mphi[\bfS^\sigma_{\tal,C}]$ denotes the $\phi$-twist of $\bfS^\sigma_{\tal,C}$ and $\sigma$ ranges over the set of permutations in $\SG_{\ell(\alpha)}$ satisfying that 
$\lambda(\alpha) = \alpha^\rmr \cdot \sigma$. 
For the definition of $\phi$-twist, see Subsection~\ref{subsec: X to bfS}.

\subsection{A surjection from $\bfP_{\alpha}$ to $\calV_\alpha$}
\label{Sec3.1}

For $\calT \in \SIT(\alpha)$, we denote by $\calT_{i,j}$ the entry at the box in row $i$ from top to bottom and column $j$ from left to right in $\calT$.
For $T \in \SRT(\alpha)$, we denote by $T^{q}_{p}$ the entry at the $q$th box from the top of the $p$th column from the left in $T$.

For each $T \in \SRT(\alpha^\rmc)$, define $\calT_T$ to be the filling of $\tcd(\alpha)$ given by
\[
(\calT_T)_{i,j} = T^{j}_{i}.
\]
\newpage\noindent
In other words, $\calT_T$ is defined via the following process:
\begin{enumerate}[label = {\rm (\arabic*)}]
\item Lift each column of $T$ to the topmost row.
\item Transpose the resulting filling along the main diagonal.
\end{enumerate}

Define a $\C$-linear map $\mPhi : \bbfP_{\alpha^c} \ra \calV_\alpha$ by 
\begin{align}\label{eq: def of phi}
\mPhi(T) = 
\begin{cases}
\calT_T & \text{if $\calT_T$ is an SIT,}\\
0 & \text{otherwise.}
\end{cases}
\end{align}

\begin{example}
For $\alpha = (1,2,2)$, let
\[
T_1 = 
\begin{array}{l}
\begin{ytableau}
\none & \none & 4 \\
\none & 2 & 5 \\
1 & 3
\end{ytableau}
\end{array}
\in \SRT(\alpha^\rmc)
\quad \text{and} \quad
T_2 = 
\begin{array}{l}
\begin{ytableau}
\none & \none & 4 \\
\none & 1 & 5 \\
2 & 3
\end{ytableau}
\end{array}
\in \SRT(\alpha^\rmc).
\]
Then
\[
\calT_{T_1} =
\begin{array}{l}
\begin{ytableau}
1 & \none \\
2 & 3 \\
4 & 5
\end{ytableau}
\end{array}
\in \SIT(\alpha)
\quad \text{and} \quad
\calT_{T_2} =
\begin{array}{l}
\begin{ytableau}
2 & \none \\
1 & 3 \\
4 & 5
\end{ytableau}
\end{array}
\notin \SIT(\alpha).
\]
Thus, $\mPhi(T_1) = \calT_{T_1}$ and  $\mPhi(T_2) = 0$.
\end{example}

\begin{theorem}\label{thm: V hom ima of P}
For $\alpha \models n$, $\mPhi : \bbfP_{\alpha^c} \ra \calV_\alpha$ is a surjective $H_n(0)$-module homomorphism,
thus $\calV_\alpha$ is a homomorphic image of $\bfP_{\alpha}$.
\end{theorem}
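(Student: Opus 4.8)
The plan is to verify directly that the $\C$-linear map $\mPhi$ intertwines the $\opi_i$-action on $\bbfP_{\alpha^\rmc}$ (equivalently, the $\pi_i \star$-action of \eqref{eq: def of bbfP action}) with the $\pi_i$-action on $\calV_\alpha$ from \eqref{eq: action on SIT}, and then to check surjectivity. First I would set up the dictionary between the two combinatorial models. Given $T \in \SRT(\alpha^\rmc)$, the filling $\calT_T$ is obtained by lifting each column of $T$ to the top and transposing; so column $j$ of $T$ becomes row $j$ of $\calT_T$, read in the same order. The key observation is how the relative position of two consecutive entries $i$ and $i+1$ translates. In $T$, the entries $i$ and $i+1$ are either in the same row, in the same column, or in "skew" position; after the lift-and-transpose, "same column of $T$" becomes "same row of $\calT_T$", and since the ribbon $\trd(\alpha^\rmc)$ has no $2\times 2$ box, being in the same row of $T$ corresponds to $i,i+1$ occupying the first column of $\calT_T$ in consecutive rows, while the third case corresponds to $i$ being strictly below $i+1$ in $\calT_T$ but not both in the first column. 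I would make this correspondence precise as a short lemma, being careful with the edge cases forced by the ribbon shape and by column-strictness of SRTs.

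With the dictionary in hand, the homomorphism check splits into the three cases of \eqref{eq: def of bbfP action}. When $i$ is strictly above $i+1$ in $T$ (equivalently $i$ is weakly below $i+1$ in $\calT_T$, by the translation above), one has $\pi_i \star T = T$ and $\pi_i \cdot \calT_T = \calT_T$, so we need $\mPhi(T) = \mPhi(\pi_i \star T)$, which is immediate; but one must also handle the subcase $\mPhi(T) = 0$, i.e. $\calT_T \notin \SIT(\alpha)$, and argue $\calT_{\pi_i \star T}$ is likewise not an SIT — here $\pi_i\star T = T$ so there is nothing to do. The genuinely delicate case is when $i$ is strictly below $i+1$ in $T$, so $\pi_i \star T = s_i \cdot T$ and we want $\mPhi(T) = \mPhi(s_i\cdot T)$ matched against $\pi_i \cdot \calT_T = s_i \cdot \calT_T$ (when neither entry is in the first column of $\calT_T$, this is the "otherwise" branch of \eqref{eq: action on SIT}) or against $\pi_i \cdot \calT_T = 0$ (when $i,i+1$ both lie in the first column of $\calT_T$). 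I would argue: (a) $\calT_{s_i \cdot T} = s_i \cdot \calT_T$ as fillings, which is formal since the bijection $T \mapsto \calT_T$ is position-preserving up to the fixed rearrangement; (b) $\calT_T$ is an SIT $\iff$ $s_i \cdot \calT_T$ is an SIT in this case — this needs the triple-type reasoning: swapping $i \leftrightarrow i+1$ when $i+1$ sits strictly NW of $i$ (the transposed version of "strictly below in $T$") cannot break the weakly-increasing rows nor the first-column condition, and conversely; and (c) when $i,i+1$ are both in the first column of $\calT_T$, show $\calT_T$ cannot be an SIT (its first column would fail to increase), so $\mPhi(T) = 0 = \pi_i \cdot \mPhi(T)$, and also $\mPhi(s_i \cdot T) = 0$ for the same reason. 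The case where $i,i+1$ are in the same row of $T$ gives $\pi_i \star T = 0$; here they land in the same column of $\calT_T$ in consecutive rows, so if $\calT_T$ is an SIT it is not the first column (rows are strictly... wait, weakly increasing, but distinct entries force strict within a row — fine), and then $i$ is in the row directly above $i+1$ with $i < i+1$, which I must show forces $\pi_i \cdot \calT_T = 0$; indeed $i$ and $i+1$ in the same column means $i$ is neither weakly below $i+1$ nor is the swap legal, and a direct check using the triple condition analog shows this configuration in an SIT forces them into the first column — giving $\pi_i \cdot \calT_T = 0$ and matching $\mPhi(0) = 0$.

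The hardest part, as usual in these tableau-module arguments, is case (b): proving that applying $s_i$ preserves SIT-ness exactly when the parallel move on $T$ is the "$s_i$" branch. This is where one must chase the interaction between the swap and the defining inequalities of an SIT, and it is the step most prone to subtle errors about which boxes exist in $\tcd(\alpha)$ and whether strictness is maintained. Finally, surjectivity: I would show $\calV_\alpha$ is cyclically generated by $\calT_0$ (this is the theorem of \cite{15BBSSZ} quoted above) and exhibit a preimage of $\calT_0$ under $\mPhi$ — namely the SRT $T_0 \in \SRT(\alpha^\rmc)$ filled column-by-column (top-to-bottom, left-to-right), for which $\calT_{T_0} = \calT_0$ by construction, so $\mPhi(T_0) = \calT_0$; since $\mPhi$ is an $H_n(0)$-module homomorphism whose image contains a generator of $\calV_\alpha$, it is surjective. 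The last sentence of the statement, that $\calV_\alpha$ is a homomorphic image of $\bfP_\alpha$, then follows because $\bbfP_{\alpha^\rmc} \cong \bfP_\alpha$ as $H_n(0)$-modules (recorded in Subsection~\ref{subsec: PIM}).
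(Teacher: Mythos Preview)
Your overall architecture matches the paper's: a three-way case split on $\pi_i \star T$, followed by surjectivity. Your surjectivity argument (exhibit $T_0$ with $\Phi(T_0)=\calT_0$ and invoke cyclicity of $\calV_\alpha$) is valid and in fact leaner than the paper's, which constructs an explicit preimage $T_\calT$ for every $\calT \in \SIT(\alpha)$.

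The homomorphism verification, however, has genuine errors in Cases~2 and~3.

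\textbf{Case 2} ($i,i+1$ in the same row of $T$). Your dictionary is wrong: the pair does \emph{not} land in the same column of $\calT_T$. In the ribbon, $i$ sits at the top of its column $j$ and $i+1$ at the bottom of column $j+1$, so in $\calT_T$ one has $(\calT_T)_{j,1}=i$ and $(\calT_T)_{j+1,\alpha_{j+1}}=i+1$. These coincide in column only when $\alpha_{j+1}=1$. The paper's point is that \emph{if} $\calT_T$ is an SIT, then the first-column and row conditions squeeze $i<(\calT_T)_{j+1,1}\le i+1$, forcing $(\calT_T)_{j+1,1}=i+1$ and hence $\alpha_{j+1}=1$; only then are both in column~1 and $\pi_i\cdot\calT_T=0$. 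Your text asserts the opposite (``if $\calT_T$ is an SIT it is not the first column'') and invokes a nonexistent triple condition for SITs.

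\textbf{Case 3} ($i$ strictly below $i+1$ in $T$). Both (b) and (c) are false. For (c): if $i$ and $i+1$ are both at the tops of their columns in $T$, then in $\calT_T$ they sit at $(p,1)$ and $(q,1)$ with $p<q$ and $i<i+1$, so the first column \emph{can} increase; $\calT_T$ can perfectly well be an SIT (e.g.\ $\alpha=(1,3)$, $\calT_T$ with first column $1,2$). What is needed is: $\pi_i\cdot\calT_T=0$ by the SIT action, and separately $\calT_{s_i\cdot T}$ fails the first-column condition (now $i+1$ sits above $i$), so $\Phi(s_i\cdot T)=0$. This very example refutes (b): the swap does \emph{not} preserve SIT-ness here. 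The paper therefore splits Case~3 into four subcases ($\Phi(T)$ zero or not, crossed with $\Phi(s_i\cdot T)$ zero or not) and tracks first-column violations explicitly; in particular, when $\Phi(T)=0$ one fixes a violating index $j$ and shows either the same violation persists in $\calT_{s_i\cdot T}$ or the violation is exactly $\{i,i+1\}$, which would force $i$ above $i+1$ in $T$ and contradict the case hypothesis. That bookkeeping cannot be replaced by the equivalence you claim in~(b).
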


\begin{proof}
First, let us show the surjectivity. Let $\calT$ be an SIT of shape $\alpha$.
Define the filling $T_{\calT}$ of $\trd(\alpha^c)$ by
\begin{align}\label{eq: T_calT}
(T_{\calT})^{j}_{i} = \calT_{i,j}. 
\end{align}
We claim that $T_{\calT} \in \SRT(\alpha^\rmc)$ and $\mPhi(T_{\calT}) = \calT$.
Since the entries in each row of $\calT$ increase from left to right, those in each column of $T_{\calT}$ increase from top to bottom.
Moreover, since the entries in the first column of $\calT$ increase from top to bottom, we have
\[
(T_{\calT})^{1}_{i} < (T_{\calT})^{1}_{i+1} < (T_{\calT})^{\alpha_{i+1}}_{i+1}.
\]
This implies that the entries in each row of $T_{\calT}$ increase from left to right, thus $T_{\calT} \in \SRT(\alpha^\rmc)$.
It follows from the definition of $\mPhi$ that $\mPhi(T_{\calT}) = \calT$, which verifies the claim.

Next, let us show that $\mPhi$ is an $H_n(0)$-module homomorphism. 
Let $T$ be an SRT of shape $\alpha^c$ and $1 \le i \le n-1$. We have three cases.
\smallskip

{\it Case 1: $\pi_i \star T = T$.}
There is nothing to prove in case where $\mPhi(T) = 0$.
Assume that $\mPhi(T) = \calT_T \neq 0$.
Since $i$ lies strictly above $i+1$ in $T$ and $T$ is of ribbon shape, $i$ lies weakly right of $i+1$ in $T$.
If $i$ and $i+1$ are in the same column in $T$, then they are in the same row in $\calT_T$. 
Otherwise, $i$ lies strictly below $i+1$ in $\calT_T$. 
In both cases, it is immediate from~\eqref{eq: action on SIT} that $\pi_i \cdot \calT_T = \calT_T$.
\smallskip

{\it Case 2: $\pi_i \star T = 0$.}
There is nothing to prove in case where $\mPhi(T) = 0$.
Assume that $\mPhi(T) = \calT_T \neq 0$.
Since $i$ and $i+1$ are in the same row of $T$, we have 
\[
(\calT_T)_{j,1} = i \quad \text{and} \quad (\calT_T)_{j+1,\alpha_{j+1}} = i+1 \quad \text{for some $1\le j \le \ell(\alpha)-1$.}
\]
This implies that $\alpha_{j+1} = 1$, and thus $\pi_i \cdot \calT_T = 0$.
\smallskip

{\it Case 3: $\pi_i \star T = s_i \cdot T$.}
We first deal with the case where $\mPhi(T) = 0$. 
Since $T$ is increasing down each column, $\calT_T$ is increasing across each row from left to right.
The assumption $\mPhi(T) = 0$ implies that the entries in the first column of $\calT_T$ fail to increase from top to bottom.
Pick up $1\le j \le n-1$ such that $(\calT_T)_{j,1} > (\calT_T)_{j+1,1}$.
If either $(\calT_T)_{j+1,1} \neq i$ or $(\calT_T)_{j,1} \neq i+1$, then $(\calT_{s_i \cdot T})_{j,1} > (\calT_{s_i \cdot T})_{j+1,1}$. 
This implies that $\calT_{s_i \cdot T} \notin \SIT(\alpha)$, thus $\mPhi(s_i \cdot T) = 0$. Otherwise, $T^1_{j+1} = i$ and $T^1_{j} = i+1$. This forces $i$ to be strictly above $i+1$ in $T$. 
It, however, contradicts the assumption that $\pi_i \star T = s_i \cdot T$.

Next we deal with the case where $\mPhi(T) = \calT_T \neq 0$. 
We claim that $\mPhi (s_i \cdot T) = \pi_i \cdot \calT_T$.
We first consider the case where $\mPhi (s_i \cdot T) = 0$. 
Then the entries in the first column of $\calT_{s_i \cdot T}$ fail to increase from top to bottom. 
On the other hand, the entries in the first column of $\calT_T$ increase from top to bottom. 
Thus there exists an index $1 \le j \le \ell(\alpha) - 1$ such that $(\calT_{s_i \cdot T})_{j+1,1} = i$ and $(\calT_{s_i \cdot T})_{j,1} = i+1$. 
This implies that $(\calT_T)_{j,1} = i$ and $(\calT_T)_{j+1,1} = i+1$, thus $\pi_i \cdot \calT_T = 0$. 
We next consider the case where $\mPhi (s_i \cdot T) = \calT_{s_i \cdot T} \neq 0$.
Since $\pi_i \star T = s_i \cdot T$, $i$ is strictly below $i+1$ in $T$. 
Since $T$ is an SRT, $i$ lies strictly left of $i+1$ in $T$. 
This forces $i$ to be strictly above $i+1$ in $\calT_T$. 
If $i$ and $i+1$ appear in the first column of $\calT_T$, then the entries of the first column of $\calT_{s_i \cdot T}$ fail to increase. 
It contradicts the assumption that $\mPhi (s_i \cdot T) \neq 0$. 
As a consequence, $\pi_i \cdot \calT_T = s_i \cdot \calT_T = \calT_{s_i \cdot T}$.
\smallskip

The second assertion follows from the fact that $\bfP_\alpha \cong \bbfP_{\alpha^\rmc}$.
\end{proof}

Note that $\Phi(T) \neq 0$ if and only if the topmost entries of the columns of $T$ are increasing from left to right.
Therefore the kernel of $\Phi$ is given by the $\C$-span of
\[
\{T \in \SRT(\alpha^\rmc) \mid T^1_p > T^1_{p+1} \quad \text{for some $1 \leq p < \ell(\alpha)$}\}.
\]
We describe a sufficient and necessary condition for $\overline{\bfP}_{\alpha^\rmc}$ to be isomorphic to $\calV_\alpha$ as an $H_n(0)$-module.

\begin{corollary}\label{Cor: P isom to X}
Let $\alpha \models n$.
Then $\overline{\bfP}_{\alpha^\rmc}$ is isomorphic to $\calV_\alpha$ as an $H_n(0)$-module if and only if $\alpha$ is of hook shape, that is, $\alpha = (n-k, 1^k)$ for some $k \geq 0$.
\end{corollary}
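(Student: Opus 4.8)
The plan is to exploit the surjection $\Phi\colon\bbfP_{\alpha^\rmc}\twoheadrightarrow\calV_\alpha$ furnished by Theorem~\ref{thm: V hom ima of P}. Since both modules are finite dimensional, $\bbfP_{\alpha^\rmc}\cong\calV_\alpha$ as $H_n(0)$-modules if and only if $\dim\bbfP_{\alpha^\rmc}=\dim\calV_\alpha$, equivalently if and only if the surjection $\Phi$ is injective, i.e.\ $\ker\Phi=0$. By the description of $\ker\Phi$ recorded just before the statement, $\ker\Phi=0$ precisely when every $T\in\SRT(\alpha^\rmc)$ satisfies $T^1_1<T^1_2<\cdots<T^1_{\ell(\alpha)}$; that is, when the topmost entries of the columns of every standard ribbon tableau of shape $\alpha^\rmc$ increase from left to right. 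So the corollary reduces to showing that this monotonicity property holds for $\SRT(\alpha^\rmc)$ exactly when $\alpha$ is a hook. I will pass through the bijection $\beta\mapsto\set(\beta)$: $\alpha$ is a hook $(n-k,1^k)$ iff $\set(\alpha)=\{n-k,\dots,n-1\}$ is a terminal interval of $[n-1]$, iff $\set(\alpha^\rmc)=[n-1]\setminus\set(\alpha)$ is an initial interval $\{1,\dots,m\}$, iff $\alpha^\rmc=(1^{n-k-1},k+1)$.

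For the ``if'' direction, suppose $\alpha=(n-k,1^k)$, so $\alpha^\rmc=(1^{n-k-1},k+1)$. Then $\trd(\alpha^\rmc)$ is an ``$L$-shape'': a single column of length $n-k$ whose topmost cell is the leftmost cell of the unique row with more than one cell, namely the top row, which has $k+1=\ell(\alpha)$ cells. Hence for any $T\in\SRT(\alpha^\rmc)$ the topmost cells of columns $1,2,\dots,\ell(\alpha)$ are exactly the cells of the top row, listed from left to right; since that row is increasing, $T^1_1<\cdots<T^1_{\ell(\alpha)}$. Thus $\ker\Phi=0$ and $\Phi$ is an isomorphism.

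For the ``only if'' direction I argue the contrapositive. If $\alpha$ is not a hook then $\set(\alpha^\rmc)$ is not an initial interval, which forces $\alpha^\rmc=(\gamma_1,\dots,\gamma_l)$ to have some part $\gamma_i\ge 2$ with $i<l$; equivalently $\trd(\alpha^\rmc)$ has a row $R$ with at least two cells that is not the top row. Let $j$ and $j+1$ be the columns of the two rightmost cells of $R$. By the ribbon structure the cell of $R$ in column $j+1$ is the rightmost cell of $R$, so the next row up begins in column $j+1$; hence column $j+1$ contains a cell strictly above $R$, while column $j$ does not extend above $R$. Therefore the topmost cell $x$ of column $j$ lies in $R$ and the topmost cell $y$ of column $j+1$ lies strictly above $R$. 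Since the covering relations of the cell poset of $\trd(\alpha^\rmc)$ move a cell only rightward within a row or downward within a column, no saturated chain joins $x$ and $y$: any chain issuing upward from $x$ stays weakly below the row $R$, whereas $y$ sits strictly above $R$; and any chain issuing upward from $y$ stays in columns $\ge j+1$, whereas $x$ lies in column $j$. Hence $x$ and $y$ are incomparable, so some linear extension of this poset --- that is, some $T\in\SRT(\alpha^\rmc)$ --- has $T^1_j>T^1_{j+1}$. Then $\ker\Phi\ne 0$, so $\bbfP_{\alpha^\rmc}\not\cong\calV_\alpha$.

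The main obstacle is the ``only if'' direction: the crux is to convert the negative hypothesis ``$\alpha$ is not a hook'' into a concrete local feature of the ribbon $\trd(\alpha^\rmc)$ and then to produce an honest standard ribbon tableau breaking the column-top monotonicity. The rest is routine: the reduction in the first paragraph is immediate from Theorem~\ref{thm: V hom ima of P} and the displayed description of $\ker\Phi$, and the ``if'' direction is a one-line inspection of the $L$-shaped diagram.
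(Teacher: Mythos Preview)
Your proof is correct and follows the same overall strategy as the paper: reduce to showing $\ker\Phi=0$ iff $\alpha$ is a hook, dispose of the ``if'' direction by inspecting the $L$-shaped ribbon, and for the ``only if'' direction produce a tableau in $\ker\Phi$. The only notable difference is in this last step: the paper works directly with $\alpha$ (observing that $\alpha$ not a hook means $\alpha_i\ge 2$ for some $i\ge 2$) and simply writes down the explicit tableau obtained by filling $\trd(\alpha^\rmc)$ left to right, top to bottom, checking that $T^1_{i-1}>T^1_i$. You instead translate the non-hook condition to $\alpha^\rmc$, locate a non-top row of length $\ge 2$, and invoke an incomparability/linear-extension argument. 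Both arguments work; the paper's is shorter and more concrete, while yours isolates the poset-theoretic reason the bad tableau exists. Your phrasing ``any chain issuing upward from $x$'' is a bit ambiguous---what you need is simply that right/down moves never increase the row (so $x\not\le y$) and never decrease the column (so $y\not\le x$)---but the intended argument is sound.
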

\begin{proof}
Let $T$ be an SRT of shape $\alpha^\rmc$.
Note that $\ker(\Phi) = \{0\}$ since the topmost entries of the columns of $T$ are increasing from left to right.
Thus the ``if'' part is verified.

To show the ``only if'' part, we assume that $\alpha$ is not of hook shape.
Then there is an index $i\geq 2$ such that $\alpha_i \geq 2$.
Let $T \in \SRT(\alpha^\rmc)$ be the SRT obtained by filling $\trd(\alpha^\rmc)$
with entries $1, 2, \ldots, n$ from left to right and from top to bottom.
Since $T^1_{i-1} > T^1_{i}$, $T \in \ker(\Phi)$, which implies that
$\dim(\bbfP_{\alpha^\rmc}) > \dim(\calV_\alpha)$.
This says that $\overline{\bfP}_{\alpha^\rmc}$ is not isomorphic to $\calV_\alpha$, as required.
\end{proof}

\subsection{A surjection from $\calV_\alpha$ to $X_\alpha$}

Let $\alpha$ be a composition. 
For $\sfT \in \SET(\alpha)$, we denote by $\sfT_{i,j}$ the entry at the box in row $i$ and column $j$ in $\sfT$.
For $\calT \in \SIT(\alpha)$,
define $\sfT_{\calT}$ to be the filling of $\trcd(\alpha)$ given by
\begin{align*}
(\sfT_{\calT})_{i,j} = \calT_{\ell(\alpha) +1 - i, j},
\end{align*}
equivalently, by flipping $\calT$ horizontally.
Define a $\C$-linear map $\mGam : \calV_\alpha \ra X_\alpha$ by 
\begin{align}\label{eq: def of psi}
\mGam(\calT) = 
\begin{cases}
\sfT_{\calT} & \text{if $\sfT_{\calT}$ is an SET,}\\
0 & \text{otherwise.}
\end{cases}
\end{align}

\begin{example}
For $\alpha = (1,2,2)$, let
\[
\calT_1 = 
\begin{array}{l}
\begin{ytableau}
1 & \none \\
2 & 3 \\
4 & 5
\end{ytableau}
\end{array}
\in \SIT(\alpha)
\quad \text{and} \quad
\calT_2 = 
\begin{array}{l}
\begin{ytableau}
1 & \none \\
2 & 5 \\
3 & 4
\end{ytableau}
\end{array}
\in \SIT(\alpha).
\]
Then
\[
\sfT_{\calT_1} =
\begin{array}{l}
\begin{ytableau}
4 & 5 \\
2 & 3 \\
1 & \none
\end{ytableau}
\end{array}
\in \SET(\alpha)
\quad \text{but} \quad
\sfT_{\calT_2} =
\begin{array}{l}
\begin{ytableau}
3 & 4 \\
2 & 5 \\
1
\end{ytableau}
\end{array}
\notin \SET(\alpha).
\]
Thus, $\mGam(\calT_1) = \sfT_{\calT_1}$ and  $\mGam(\calT_2) = 0$.
\end{example}

\begin{theorem}\label{thm: X hom ima of V}
Given $\alpha \models n$, the map $\mGam : \calV_\alpha \ra X_\alpha$ is a surjective $H_n(0)$-module homomorphism, thus $X_\alpha$ is a homomorphic image of $\bfP_\alpha$.
\end{theorem}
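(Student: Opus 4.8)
The plan is to mirror the proof of Theorem~\ref{thm: V hom ima of P}: show first that $\mGam$ is surjective by exhibiting an explicit preimage, then check that $\mGam$ commutes with each generator $\pi_i$ by a case analysis on the action. For surjectivity, given any $\sfT \in \SET(\alpha)$, let $\calT$ be the filling of $\tcd(\alpha)$ obtained by flipping $\sfT$ vertically, i.e. $\calT_{i,j} := \sfT_{\ell(\alpha)+1-i,j}$. The rows of $\calT$ increase left to right because the rows of $\sfT$ do; and the first column of $\calT$ increases top to bottom because the first column of $\sfT$ decreases top to bottom. Hence $\calT \in \SIT(\alpha)$, and by construction $\sfT_{\calT} = \sfT$, so $\mGam(\calT) = \sfT$.

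For the homomorphism property, fix $\calT \in \SIT(\alpha)$ and $1 \le i \le n-1$, and split into the three cases dictated by~\eqref{eq: action on SIT}: $\pi_i \cdot \calT = \calT$ (i.e. $i$ is weakly below $i+1$ in $\calT$), $\pi_i \cdot \calT = 0$ (i.e. $i,i+1$ both in the first column), and $\pi_i \cdot \calT = s_i \cdot \calT$ (otherwise). Under the vertical flip, ``$i$ weakly below $i+1$ in $\calT$'' becomes ``$i$ weakly above $i+1$ in $\sfT_{\calT}$'', and since columns of an SET are strictly decreasing, this splits further into ``$i$ strictly left of $i+1$'' (giving $\pi_i \cdot \sfT_{\calT} = \sfT_{\calT}$) or ``$i,i+1$ in the same column'' (giving $\pi_i \cdot \sfT_{\calT} = 0$, which forces $\calT_{i,j} = i$, $\calT_{i+1,j} = i+1$ only when the flipped positions land in a single column). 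The case $\pi_i \cdot \calT = 0$ corresponds to $i,i+1$ in the first column of $\sfT_{\calT}$, hence in the same column, so $\pi_i \cdot \sfT_{\calT} = 0$ as well. In the generic case $\pi_i \cdot \calT = s_i \cdot \calT$, one argues as in Theorem~\ref{thm: V hom ima of P}: if $\mGam(\calT) = 0$ one shows $\sfT_{s_i \cdot \calT}$ still violates the column-decreasing condition of an SET (so $\mGam(s_i \cdot \calT) = 0$), except in the situation that would contradict $\pi_i \cdot \calT = s_i \cdot \calT$; and if $\mGam(\calT) = \sfT_{\calT} \ne 0$, one checks $\sfT_{s_i\cdot\calT}$ is again an SET unless $i,i+1$ occupy the same column, a situation precluded by the hypothesis, whence $\mGam(s_i \cdot \calT) = s_i \cdot \sfT_{\calT} = \pi_i \cdot \sfT_{\calT}$. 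Finally, since $\calV_\alpha$ is a homomorphic image of $\bfP_\alpha$ by Theorem~\ref{thm: V hom ima of P}, composing gives that $X_\alpha$ is a homomorphic image of $\bfP_\alpha$.

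The main obstacle, as in the previous theorem, is the third case: one must carefully track when swapping $i$ and $i+1$ in $\calT$ keeps the SIT conditions intact versus when it destroys them, and correlate this precisely with whether the corresponding swap in $\sfT_{\calT}$ preserves the SET conditions. The delicate point is that the ``weakly below/weakly above'' condition is sensitive to whether $i$ and $i+1$ share a row (in $\calT$) or a column (in $\sfT_{\calT}$), and one has to rule out, using the hypothesis $\pi_i \cdot \calT = s_i \cdot \calT$ together with the fact that $\calT$ and its image are genuine standard tableaux, the configurations where the first column breaks. Everything else is a routine unwinding of the vertical-flip bijection between $\tcd(\alpha)$-fillings and $\trcd(\alpha)$-fillings.
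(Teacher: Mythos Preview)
Your approach is exactly what the paper does—its proof reads in full ``the verification can be done with a slight modification of the proof of Theorem~\ref{thm: V hom ima of P}''—and your surjectivity argument is clean. But two details of the case analysis are not right.

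In Case~3 with $\mGam(\calT)=\sfT_\calT\neq 0$, you assert that the situation ``$i,i+1$ in the same column of $\sfT_\calT$'' is precluded by the hypothesis $\pi_i\cdot\calT=s_i\cdot\calT$. It is not: take $\alpha=(2,2)$, $\calT$ with rows $(1,3)$ and $(2,4)$, and $i=3$; then $\pi_3\cdot\calT=s_3\cdot\calT$ while $3$ and $4$ sit in column~$2$ of $\sfT_\calT$. What actually happens in this subcase is that both sides vanish: $\pi_i\cdot\sfT_\calT=0$ by the $X_\alpha$-action, and $\sfT_{s_i\cdot\calT}$ fails column-decreasing precisely in that column, so $\mGam(s_i\cdot\calT)=0$ as well. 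In the complementary subcase ($i,i+1$ in different columns) you must still argue that $i$ is strictly \emph{right} of $i+1$ in $\sfT_\calT$, so that $\pi_i\cdot\sfT_\calT=s_i\cdot\sfT_\calT$; this follows from looking at the box in $i+1$'s row and $i$'s column (which exists by left-justification) and using both SET conditions to trap an entry strictly between $i$ and $i+1$. The same box argument, with the roles of $i$ and $i+1$ reversed, is what rules out ``$i$ strictly right of $i+1$'' in your Case~1, where your text only invokes column-decreasing (which excludes the same-column case but not the strictly-right case). With these two patches the argument goes through.
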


\begin{proof}
The verification can be done with a slight modification of the proof of Theorem~\ref{thm: V hom ima of P}.
\end{proof}

Note that
$\Gamma \circ \Phi(T) \neq 0$  if and only if 
there exist $1 \leq p < \ell(\alpha)$ and $1 \leq q \leq \alpha_p$ such that $T^q_p > T^q_{p+1}$.
Therefore the kernel of $\Gamma \circ \Phi$ is given by the $\C$-span of
\[
\{T \in \SRT(\alpha^\rmc) \mid T^q_p > T^q_{p+1} \quad \text{for some $1 \leq p < \ell(\alpha)$ and $1 \leq q \leq \alpha_p$}\}.
\]
\begin{remark}
Theorem~\ref{thm: X hom ima of V} can also be derived from the construction of $\calV_\alpha$ and $X_\alpha$.
In~\cite{19Searles}, Searles introduced \emph{standard row-increasing tableaux} of shape $\alpha$ and an $H_n(0)$-module $V_\alpha$ whose underlying vector space is the $\C$-span of the set $\SRIT(\alpha)$ of standard row-increasing tableaux of shape $\alpha$.
Letting $\mathrm{NSET}(\alpha) := \SRIT(\alpha) \setminus \SET(\alpha)$ and $Y_\alpha$ be the $\C$-span of $\mathrm{NSET}(\alpha)$, 
he showed that $Y_\alpha$ is closed under the $H_n(0)$-action, thus it is an $H_n(0)$-submodule of $V_\alpha$.
In this setting, $X_\alpha$ is defined by $V_\alpha / Y_\alpha$.

It should be remarked that the $H_n(0)$-module $\calM_\alpha$ introduced by Berg {\it et al.}~\cite[Subsection 3.3]{15BBSSZ} can be naturally identified with $V_\alpha$.
Under this identification, let $\re(\SIT(\alpha)) := \{\sfT_\calT \mid \calT \in \SIT(\alpha)\}$, $\mathrm{NSIT}(\alpha) := \SRIT(\alpha) \setminus \re(\SIT(\alpha))$, and $\calN_\alpha$ be the $\C$-span of $\mathrm{NSIT}(\alpha)$.
They showed that $\calN_\alpha$ is closed under the $H_n(0)$-action, thus it is an $H_n(0)$-submodule of $V_\alpha$.
In this setting, $\calV_\alpha$ is defined by $V_\alpha / \calN_\alpha$.
Since $\mathrm{NSIT}(\alpha) \subset \mathrm{NSET}(\alpha)$, there exists a natural $H_n(0)$-module projection from $V_\alpha / \calN_\alpha$ onto $V_\alpha / Y_\alpha$, which is essentially same to $\mGam: \calV_\alpha \ra X_\alpha$.
\end{remark}

\subsection{A surjection from $X_\alpha$ to the $\phi$-twist of $\bfS^\sigma_{\tal,C}$}
\label{subsec: X to bfS}

Let us recall the automorphism $\mphi: H_n(0) \ra H_n(0)$ defined by $\phi(\pi_{i}) = \pi_{w_0 s_i w_0} = \pi_{n-i}$ for all $1 \le i \le n-1$ (see~\cite{05Fayers}). 
Given an $H_n(0)$-module $M$, it induces another $H_n(0)$-action $\pact$ on the vector space $M$ given by
\begin{align}\label{eq: phi twist}
\pi_i \pact v := \mphi(\pi_i) \cdot v \quad \text{for $1 \le i \le n-1$}.
\end{align}
We denote the resulting $H_n(0)$-module by $\mphi[M]$ and call it \emph{the $\phi$-twist of $M$}.

Let $\alpha$ be a composition and $\sigma \in \SG_{\ell(\alpha)}$ satisfying that $\lambda(\alpha) = \alpha^\rmr \cdot \sigma$.
Indeed the set of such $\sigma$'s appears as a Bruhat interval (see Remark~\ref{rem:Bruhat}).
For $\sfT \in \SET(\alpha)$,
define the filling $\tau_{\sfT}$ of $\tcd(\ldalpha)$ by
\begin{align*}
(\tau_{\sfT})_{i,j} = n+1 - \sfT_{\sigma(i),j}.
\end{align*}
Roughly speaking,
$\tau_{\sfT}$ is the filling obtained from $\sfT$ by rearranging the rows of $\sfT$ according to $\sigma$ 
and by taking the complement of each entry in $\sfT$. 

\begin{remark}\label{rem:Bruhat}
Assume that $\lambda(\alpha) = \alpha^\rmr \cdot \sigma$.
Letting $\alpha^\rmr = ((\alpha^\rmr)_1, (\alpha^\rmr)_2, \ldots, (\alpha^\rmr)_{\ell(\alpha)})$,
we define $\sigma_{\rm max}$ to be the permutation in $\SG_{\ell(\alpha)}$ such that  $\lambda(\alpha) = \alpha^\rmr \cdot \sigma_{\rm max}$ and
\[
\sigma_{\rm max}^{-1}(i) > \sigma_{\rm max}^{-1}(j) \text{ whenever $(\alpha^\rmr)_i = (\alpha^\rmr)_j$ and $i <j$.}
\]
We also define $\sigma_{\rm min}$ to be the permutation in $\SG_{\ell(\alpha)}$ such that $\lambda(\alpha) = \alpha^\rmr \cdot \sigma_{\rm min}$ and
\[
\sigma_{\rm min}^{-1}(i) < \sigma_{\rm min}^{-1}(j) \text{ whenever $(\alpha^\rmr)_i = (\alpha^\rmr)_j$ and $i <j$.}
\]
Roughly, $\sigma_{\rm max}$ (resp. $\sigma_{\rm min})$ is the permutation sorting the entries of $\alpha^{\rmr}$ in the descending order in such a way that the order of same entries in $\alpha^\rmr$ is reversed in (resp. as in) $\lambda(\alpha)$.
Write $\lambda(\alpha) = 1^{m_1}2^{m_2} \cdots k^{m_k}$ and set $m_{k+1} := 0$.
Let $\rho$ be the longest permutation in the subgroup
$\prod_{1 \le i \le k} \SG_{[ \sum_{j = i+1}^{k+1} m_j+1, \sum_{j = i}^{k+1} m_j ]}$ of $\SG_{\ell(\alpha)}$.
Then $\sigma_{\rm max} = \sigma_{\rm min} \cdot \rho$. 
Hence we have
\[
\{ \sigma \in \SG_{\ell(\alpha)} \mid \lambda(\alpha) = \alpha^\rmr \cdot \sigma \} = 
[\sigma_{\rm min}, \sigma_{\rm max}] \, ,
\]
and the number of such $\sigma$'s
equals to $\prod_{i=1}^k (m_i!)$.
\end{remark}

\begin{example}\label{eg: tau_sft}
Let $\alpha = (1,2,1,1,2)$. Then
$\sigma_{\rm max} = 41 \, 532$,  $\sigma_{\rm min} = 14 \, 235$, and
$\sigma_{\rm max} = \sigma_{\rm min} \cdot (s_1)  (s_3s_4s_3)$,
which are illustrated as follows:
\[
\begin{tikzpicture}
\node at (-0.5,4) {$\alpha^\rmr$};
\node at (0.2,4) {$=$};
\node at (1,4) {$2$};
\node at (2,4) {$1$};
\node at (3,4) {$1$};
\node at (4,4) {$2$};
\node at (5,4) {$1$};

\node at (-1,2) {$\alpha^\rmr \cdot \sigma_{\min}$};
\node at (0.2,2) {$=$};
\node at (1,2) {$2$};
\node at (2,2) {$2$};
\node at (3,2) {$1$};
\node at (4,2) {$1$};
\node at (5,2) {$1$};

\node at (-1,0) {$\alpha^\rmr \cdot \sigma_{\max}$};
\node at (0.2,0) {$=$};
\node at (1,0) {$2$};
\node at (2,0) {$2$};
\node at (3,0) {$1$};
\node at (4,0) {$1$};
\node at (5,0) {$1$};

\draw[->] (1,3.5) -- (1,2.5);
\draw[->] (2,3.5) -- (3,2.5);
\draw[->] (3,3.5) -- (4,2.5);
\draw[->] (4,3.5) -- (2,2.5);
\draw[->] (5,3.5) -- (5,2.5);

\draw[->] (1,1.5) -- (2,0.5);
\draw[->] (2,1.5) -- (1,0.5);
\draw[->] (3,1.5) -- (5,0.5);
\draw[->] (4,1.5) -- (4,0.5);
\draw[->] (5,1.5) -- (3,0.5);

\end{tikzpicture}
\]
Let
\[
\sfT_1 =
\begin{array}{l}
\begin{ytableau}
6 & 7\\
5 \\
4  \\
2 & 3 \\
1 & \none
\end{ytableau}
\end{array}
\quad \text{and} \quad
\sfT_2 =
\begin{array}{l}
\begin{ytableau}
5 & 7\\
4 \\
3 \\
2 & 6 \\
1
\end{ytableau}
\end{array}
\in \SET(\alpha).
\]
In case of $\sigma = \sigma_{\rm min}$, we have
\[
\tau_{\sfT_1} =
\begin{array}{l}
\begin{ytableau}
2 & 1 \\
6 & 5 \\
3 \\
4  \\
7  
\end{ytableau}
\end{array}
\quad \text{and} \quad
\tau_{\sfT_2} =
\begin{array}{l}
\begin{ytableau}
*(black!10) 3 & *(black!10) 1 \\
6 & *(black!10) 2 \\
4 \\
5  \\
7  
\end{ytableau}
\end{array}.
\]
Note that $\tau_{\sfT_1}$ is an $\SPCT$, but $\tau_{\sfT_2}$ is not (see the shaded boxes).
\end{example}

Consider a $\mathbb{C}$-linear map
$\Ups_{\alpha, \sigma} : X_\alpha \ra \mphi[\bfS^{{\sigma}}_{\lambda(\alpha),C}]$ defined by
\begin{align*}
 \quad \sfT \mapsto
\begin{cases}
\tau_\sfT & \text{if it is an SPCT,}\\
0 & \text{otherwise}
\end{cases}
\end{align*}
for $\sfT \in \SET(\alpha)$.
We claim that  if $\tau_T$ is an SPCT, then $\tau_T$ is automatically contained in $\bfS^{{\sigma  }}_{\lambda(\alpha),C}$.
Let $\sfT, \sfT' \in \SET(\alpha)$.
Since the entries in the $i$th column of $\sfT$ and $\sfT'$ are decreasing from top to bottom for each $i \ge 1$, the standardization of the $i$th column of $\tau_{\sfT}$ is equal to that of $\tau_{\sfT'}$.
Moreover, under this construction, 
it is easy to see that $\sfT_0 \in \SET(\alpha)$, which is a generator of $X_\alpha$,
is mapped to the canonical source tableau $\tauC$ in $\SPCT^{\sigma}(\ldalpha)$
(for the definition of $\sfT_0$ and $\tauC$, see Subsection~\ref{subsec: SET} and Subsection~\ref{subsec: SPCT}).
Therefore $\Ups_{\alpha, \sigma}$ is a well-defined $\mathbb{C}$-linear map.
In Example~\ref{eg: tau_sft}, $\Ups_{(1,2,2), \id}(\sfT_1) = \tau_{\sfT_1}$ but $\Ups_{(1,2,2), \id}(\sfT_2) = 0$
since $\tau_{\sfT_2}$ does not satisfy the triple condition (see the shaded boxes).

From now on,
if $\alpha$ and $\sigma$ are clear in the context, we will drop the subscript from $\Ups_{\alpha, \sigma}$.

\begin{theorem}\label{thm: X isom to bfS}
Let $\alpha \models n$ and $\sigma \in \SG_{\ell(\alpha)}$ satisfying that $\lambda(\alpha) = \alpha^\rmr \cdot \sigma$.
The map $\Ups : X_\alpha \rightarrow \mphi[\bfS^{{\sigma}}_{{\ldalpha},C}]$ is a surjective $H_n(0)$-module homomorphism.
\end{theorem}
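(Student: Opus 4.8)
We have already observed that $\Ups(\sfT_0)=\tauC$, and by Theorem~\ref{lem: unique source and sink} the canonical submodule $\bfS^\sigma_{\ldalpha,C}$ is generated as an $H_n(0)$-module by its source tableau $\tauC$. Since $\mphi$ is an algebra automorphism of $H_n(0)$, applying $H_n(0)$ to $\tauC$ under the twisted action $\pact$ inside $\mphi[\bfS^\sigma_{\ldalpha,C}]$ again recovers all of $\bfS^\sigma_{\ldalpha,C}$. As $X_\alpha$ is cyclically generated by $\sfT_0$ (Searles, \cite{19Searles}), once we know that $\Ups$ is an $H_n(0)$-module homomorphism $X_\alpha\to\mphi[\bfS^\sigma_{\ldalpha,C}]$ we obtain $\Ups(X_\alpha)=\Ups(H_n(0)\cdot\sfT_0)=H_n(0)\pact\Ups(\sfT_0)=\bfS^\sigma_{\ldalpha,C}$, so surjectivity is automatic. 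Hence the whole content of the theorem is the identity
\[
\Ups(\pi_i\cdot\sfT)=\pi_i\pact\Ups(\sfT)=\pi_{n-i}\cdot\Ups(\sfT)\qquad(1\le i\le n-1,\ \sfT\in\SET(\alpha)),
\]
which I would prove by the same three-way case analysis as in the proof of Theorem~\ref{thm: V hom ima of P}.

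\textbf{Translation dictionary.} Put $k:=n-i$. Under complementation the entries $i$ and $i+1$ of $\sfT$ become the entries $k+1$ and $k$ of $\tau_\sfT$; the construction of $\tau_\sfT$ preserves columns and relabels rows by $\sigma^{-1}$, and one checks directly that $\tau_{s_i\cdot\sfT}=s_k\cdot\tau_\sfT$ as fillings of $\tcd(\ldalpha)$. Comparing the columns of $i$ and $i+1$ in $\sfT$ with the descent/attacking data of $k$ in $\tau_\sfT$ gives the dictionary: (a) $i$ strictly left of $i+1$ in $\sfT$ $\iff$ $k$ is not a descent of $\tau_\sfT$; (b) $i$ and $i+1$ in the same column of $\sfT$ $\iff$ $k$ is an attacking descent of $\tau_\sfT$ (via the same-column clause); (c) $i$ strictly right of $i+1$ in $\sfT$ $\iff$ $k$ is a descent of $\tau_\sfT$.

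\textbf{Case analysis.} Run the three cases of the SET-action of $\pi_i$, each split on whether $\tau_\sfT$ is an SPCT (i.e. $\Ups(\sfT)\neq 0$). When $\pi_i\cdot\sfT=\sfT$ (case (a)) or $\pi_i\cdot\sfT=0$ (case (b)), the dictionary matches both sides in every sub-case (when $\tau_\sfT$ is not an SPCT both sides vanish or reduce to $\Ups(\sfT)$; when it is, the not-a-descent, resp. attacking, clause of \eqref{eq: Hecke action on SPCT} does the job). The substantial case is $\pi_i\cdot\sfT=s_i\cdot\sfT$ (case (c), so $k$ is a descent of $\tau_\sfT$). If $\tau_\sfT$ is an SPCT and $k$ is nonattacking, then $s_k\cdot\tau_\sfT=\tau_{s_i\cdot\sfT}$ lies in the same connected component $C$ (the $H_n(0)$-action on $\bfSsa$ preserves each class of $\calE^\sigma(\ldalpha)$), hence is an SPCT, and $\Ups(\pi_i\cdot\sfT)=\tau_{s_i\cdot\sfT}=s_k\cdot\tau_\sfT=\pi_k\cdot\tau_\sfT=\pi_{n-i}\cdot\Ups(\sfT)$. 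If $\tau_\sfT$ is an SPCT and $k$ is attacking, then $\pi_k\cdot\tau_\sfT=0$, so we must show $\tau_{s_i\cdot\sfT}=s_k\cdot\tau_\sfT$ is not an SPCT — precisely the fact (implicit in the well-definedness of \eqref{eq: Hecke action on SPCT} in \cite{19TW}) that interchanging a pair realizing an attacking descent destroys the SPCT property. Finally, if $\tau_\sfT$ is not an SPCT, one must check that $s_k\cdot\tau_\sfT=\tau_{s_i\cdot\sfT}$ remains a non-SPCT: as in the first half of Case 3 of the proof of Theorem~\ref{thm: V hom ima of P}, the violated condition can only be the weakly-decreasing-rows condition (3) or the triple condition (4) of Definition~\ref{def: PCT} (conditions (1) and (2) survive interchanging the consecutive entries $k,k+1$, since in case (c) at most one of them lies in the first column), and its witness persists after the swap because the rows of $\sfT$ increase and its columns strictly decrease.

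\textbf{Main obstacle.} The hard part is this last sub-case, i.e. showing that $\{\sfT\in\SET(\alpha):\tau_\sfT\notin\SPCT^\sigma(\ldalpha)\}$ spans an $H_n(0)$-submodule of $X_\alpha$: one has to rule out that an adjacent transposition of values could accidentally repair a broken triple condition, which requires tracking the three cells of a violating triple together with the action of $\sigma$ on rows. Everything else reduces to routine bookkeeping with the dictionary above.
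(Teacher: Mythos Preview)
Your reduction of surjectivity to the intertwining identity via cyclic generation is valid and in fact slicker than the paper's argument (the paper builds an explicit preimage $\sfT_\tau$ for each $\tau\in C$); once the homomorphism property is established, your argument goes through.

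The gap is exactly where you flag it, the ``$\tau_\sfT$ not an SPCT'' subcase of Case~(c). Your proposed strategy---pick a witness to the failure of condition~(3) or~(4) in $\tau_\sfT$ and show it persists after the swap---does not work as stated, for two reasons. First, condition~(3) cannot fail here: in Case~(c) the entries $i,i+1$ lie in different rows of $\sfT$ (same row would violate the SET row-increasing condition), hence $n-i,n-i+1$ lie in different rows of $\tau_\sfT$, and swapping consecutive integers in different rows preserves row-decreasingness. Second, and more seriously, a specific triple violation \emph{can} be repaired by the swap; the point is that when this happens a new violation is forced elsewhere, so ``tracking a fixed witness'' is the wrong picture.

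The paper's resolution is to argue by contradiction and split on the attacking status of the pair $(n-i,n-i+1)$ in $\tau_\sfT$. Assume $\tau_{s_i\cdot\sfT}$ \emph{is} an SPCT. If the pair is attacking (necessarily the adjacent-column clause, since in Case~(c) the columns differ), then after the swap the triple at the boxes $(r_{n-i},c_{n-i}),\ (r_{n-i},c_{n-i}+1),\ (r_{n-i+1},c_{n-i}+1)$ is violated in $\tau_{s_i\cdot\sfT}$: the entry $m$ at $(r_{n-i},c_{n-i}+1)$ satisfies $m<n-i$ by row-decreasingness of $\tau_\sfT$, while the hypothesis $n-i+1>n-i$ of the triple condition now demands $m>n-i$. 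If the pair is nonattacking, one checks (by the same kind of analysis: the only inequality among triple-cells that can flip under the swap is the one between the two swapped boxes, and that forces the attacking configuration) that $\tau_\sfT$ itself satisfies the triple condition---hence is an SPCT, contradicting $\Ups(\sfT)=0$. This attacking/nonattacking dichotomy is the missing idea in your outline; it replaces the attempt to propagate an arbitrary witness by a clean structural case split.

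In the ``$\tau_\sfT$ SPCT, $n-i$ attacking'' subcase your appeal to ``well-definedness of \eqref{eq: Hecke action on SPCT}'' is not quite the right pointer (well-definedness only says $\pi_{n-i}\cdot\tau_\sfT=0$, not that $s_{n-i}\cdot\tau_\sfT$ is a non-SPCT), but the underlying claim is correct and follows from the same triple computation just described.
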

\begin{proof}
We first show that $\Ups$ is surjective.
For $\tau \in C$,
let $\sfT_\tau$ be the filling of $\trcd(\alpha)$ defined by
\[
(\sfT_\tau)_{i,j} = n+1-\tau_{\sigma^{-1}(i),j} \, .
\]
Since $\tau$ is decreasing across each row from left to right, $\sfT_\tau$ is increasing across each row from left to right.
Since $\tau \in C$, $\tau_{\sigma^{-1}(i), j} < \tau_{\sigma^{-1}(k),j}$ for $i < k$ and $(i, j), (k,j) \in \trcd(\alpha)$.
This shows that $\sfT_\tau$ is decreasing down each column.
Consequently, $\sfT_\tau \in \SET(\alpha)$.
It is straightforward from the construction of $\sfT_\tau$ that $\Ups(\sfT_\tau) = \tau$.

Next, we show that $\Ups$ is an $H_n(0)$-module homomorphism, that is,
$$
\Ups(\pi_i \cdot \sfT) = \pi_i  \pact \Ups(\sfT)  = \pi_{n-i} \cdot \Ups(\sfT)  \quad \text{for $1 \le i \le n-1$ and $\sfT \in \SET(\alpha)$.}
$$

{\it Case 1: $\pi_i \cdot \sfT = \sfT$.}
From the assumption, $i$ lies strictly left of $i+1$ in $\sfT$.
Hence, by the construction of $\Ups$,
$n+1-i$ lies strictly left of $n-i$ in $\tau_\sfT$.
If $\Ups(\sfT) = 0$, then $\Ups(\pi_i \cdot \sfT) = \pi_i  \pact \Ups(\sfT) =0$.
Otherwise, $n-i \notin \Des(\tau_T)$, therefore $\pi_{n-i} \cdot \Ups(\sfT) = \Ups(\sfT) $ as required.

{\it Case 2: $\pi_i \cdot \sfT = 0$.}
If $\Ups(\sfT) = 0$, then there is nothing to prove.
Otherwise,
$n-i$ is an attacking descent in $\tau_\sfT$ since $i$ and $i+1$ are in the column of $\sfT$.
Thus $\pi_{n-i}\cdot \Ups(\sfT) = 0$.

{\it Case 3: $\pi_i \cdot \sfT = s_i \cdot \sfT$.}
From the assumption, $i$ lies strictly right of $i+1$ in $\sfT$, thus $n-i$ lies strictly left of $n-i+1$ in $\tau_\sfT$.
Moreover, by construction of $\Ups$ we have that $ \tau_\sfT = s_{n-i}\cdot \tau_{s_{i}\cdot \sfT}$.
Suppose that $\Ups(\sfT) = 0$ but $\Ups(\pi_i \cdot \sfT) \neq 0$.
If $n-i$ and $n-i+1$ are attacking in $\tau_\sfT$, then the triple condition breaks at $n-i$, $n-i+1$, and the entry immediate right of $n-i+1$ in $\tau_{s_{i}\cdot \sfT}$. 
Therefore $n-i$ and $n-i+1$ are nonattacking in $\tau_\sfT$.
This implies that $\tau_\sfT \in C$, which is a contradiction to $\Ups(\sfT) = 0$.
Hence we can conclude that $\Ups(\pi_i \cdot \sfT) = 0$ whenever $\Ups(\sfT) = 0$.
Next we deal with the case where $\Ups(\sfT) \neq 0$.
Since $n-i \in \Des(\tau_\sfT)$, we have two cases:

\begin{enumerate}[label = {\rm (\roman*)}]
\item
If $n-i$ is an attacking descent in $\tau_\sfT$,
then $\pi_{n-i} \cdot \Ups(\sfT) = 0$.
Let $(r_{n-i},c_{n-i})$ and $(r_{n-i+1},c_{n-i+1})$ be the box of $n-i$ and $n-i+1$ in $\tau_\sfT$, respectively.
Then $r_{n-i} < r_{n-i+1}$ and $c_{n-i} + 1 = c_{n-i+1}$.
Since the shape of $\tau_\sfT$ is the partition $\lambda(\alpha)$,
the box $(r_{n-i}, c_{n-i+1})$ should be filled with an entry $< n-i$ in $\tau_{\sfT}$, so in $\tau_{s_i \cdot \sfT}$.
The triple condition breaks in $\tau_{s_i \cdot \sfT}$ since the entry at $(r_{n-i+1}, c_{n-i+1})$ in $\tau_{s_i \cdot \sfT}$ is $n-i$.
This shows that $\Ups(\pi_i \cdot \sfT) = 0$, and therefore $\Ups(\pi_i \cdot \sfT) = \pi_{n-i} \cdot \Ups(\sfT) = 0$.

\item
If $n-i$ is a nonattacking descent in $\tau_\sfT$, then
$$
\pi_{n-i} \cdot \Ups(\sfT) = \pi_{n-i} \cdot \tau_\sfT = s_{n-i} \cdot \tau_\sfT = \tau_{s_{n-i} \cdot \sfT} = \Ups(s_i \cdot \sfT) = \Ups(\pi_i \cdot \sfT)$$
as desired.
\end{enumerate}
\end{proof}

\begin{example}
Let $\alpha = (2,1,2)$ and $\beta  = (1,2,2)$.
Letting $\sigma = 312$ and $\sigma' = 123$, then $\lambda(\alpha) = \alpha^\rmr \cdot \sigma$ and $\lambda(\beta) = \beta^\rmr \cdot \sigma'$.
\[
\def \hp{0.15}
\def \vp{0.25}
\begin{tikzpicture}[baseline = 0mm]
\node[above] at (0.3,4) {$X_{\alpha}$};
\node at (0,3) {
\begin{ytableau}
4 & 5 \\
3  \\
1 & 2
\end{ytableau}} ;
\node at (0.7-\hp,3.25) {} edge [out=40,in=320, loop] ();
\node at (1.4-\hp,3.7) {\small $\pi_1,\pi_4$};

\draw [->] (0,1.75+\vp) -- (0,1.25+\vp);
\node at (0.4,1.5+\vp) {\small $\pi_2$};

\node at (1.5,1.7) {$0$};
\draw [->] (0.6,2.2) -- (1.25,1.85);
\node at (1,2.25) {\small $\pi_3$};

\node at (0,0+2*\vp) {
\begin{ytableau}
4 & 5 \\
2    \\
1 & 3
\end{ytableau}};
\node at (0.7-\hp,0.25+2*\vp) {} edge [out=40,in=320, loop] ();
\node at (1.4-\hp,0.7+2*\vp) {\small $\pi_2,\pi_4$};

\node at (1.5,-1.3+2*\vp) {$0$};
\draw [->] (0.6,-0.8+2*\vp) -- (1.25,-1.15+2*\vp);
\node at (1,-0.75+2*\vp) {\small $\pi_1$};

\draw [->] (0,-1.25+3*\vp) -- (0,-1.75+3*\vp);
\node at (0.4,-1.5+3*\vp) {\small $\pi_3$};

\node at (0,-3+4*\vp) {
\begin{ytableau}
3 & 5 \\
2  \\
1 & 4
\end{ytableau}};
\node at (0.7-\hp,-2.75+4*\vp) {} edge [out=40,in=320, loop] ();
\node at (1.4-2*\hp,-2.3+4*\vp) {\small $\pi_3$};

\draw [->] (0,-4.1+4*\vp) -- (0,-4.75+4*\vp);
\node at (0.9,-4.5+4*\vp) {\small $\pi_1,\pi_2,\pi_4$};
\node at (0,-5.1+4*\vp) {$0$};
\end{tikzpicture}
\qquad
\begin{tikzpicture}[baseline = 0mm]
\node[above] at (0.3,4) {$\bfS^{\sigma}_{\ldalpha,C}$};
\node at (0,3) {
\begin{ytableau}
5 & 4 \\
2 & 1 \\
3
\end{ytableau}} ;
\node at (0.7-\hp,3.25) {} edge [out=40,in=320, loop] ();
\node at (1.4-\hp,3.7) {\small $\pi_1,\pi_4$};

\draw [->] (0,1.75+\vp) -- (0,1.25+\vp);
\node at (0.4,1.5+\vp) {\small $\pi_3$};

\node at (1.5,1.7) {$0$};
\draw [->] (0.5,2.2) -- (1.25,1.85);
\node at (1,2.25) {\small $\pi_2$};

\node at (0,0+2*\vp) {
\begin{ytableau}
5 & 3 \\
2 & 1 \\
4
\end{ytableau}};
\node at (0.7-\hp,0.25+2*\vp) {} edge [out=40,in=320, loop] ();
\node at (1.4-\hp,0.7+2*\vp) {\small $\pi_1,\pi_3$};

\node at (1.5,-1.3+2*\vp) {$0$};
\draw [->] (0.5,-0.8+2*\vp) -- (1.25,-1.15+2*\vp);
\node at (1,-0.75+2*\vp) {\small $\pi_4$};

\draw [->] (0,-1.25+3*\vp) -- (0,-1.75+3*\vp);
\node[right] at (0.,-1.5+3*\vp) {\small $\pi_2$};

\node at (0,-3+4*\vp) {
\begin{ytableau}
5 & 2 \\
3  & 1 \\
4
\end{ytableau}};
\node at (0.7-\hp,-2.75+4*\vp) {} edge [out=40,in=320, loop] ();
\node at (1.4-2*\hp,-2.3+4*\vp) {\small $\pi_2$};

\draw [->] (0,-4.1+4*\vp) -- (0,-4.75+4*\vp);
\node at (0.9,-4.5+4*\vp) {\small $\pi_1,\pi_3,\pi_4$};
\node at (0,-5.1+4*\vp) {$0$};
\end{tikzpicture}
\qquad \qquad 
\begin{tikzpicture}[baseline = 70]
\node[above] at (0.3,4) {$X_{\beta}$};
\node at (0,3) {
\begin{ytableau}
4 & 5 \\
2 & 3  \\
1
\end{ytableau}} ;
\node at (0.7-\hp,3.25) {} edge [out=40,in=320, loop] ();
\node at (1.4-\hp,3.7) {\small $\pi_2,\pi_4$};

\draw [->] (0,1.75+\vp) -- (0,1.25+\vp);
\node at (0.4,1.5+\vp) {\small $\pi_3$};

\node at (1.5,1.7) {$0$};
\draw [->] (0.5,2.2) -- (1.25,1.85);
\node at (1,2.25) {\small $\pi_1$};

\node at (0,0+2*\vp) {
\begin{ytableau}
3 & 5 \\
2 & 4   \\
1
\end{ytableau}};
\node at (0.7-\hp,0.25+2*\vp) {} edge [out=40,in=320, loop] ();
\node at (1.4-\hp,0.7+2*\vp) {\small $\pi_3$};

\draw [->] (0,-1.4+3*\vp) -- (0,-2+3*\vp) node[below] {$0$};
\node[right] at (0.0,-1.) {\small $\pi_1,\pi_2,\pi_4$};
\end{tikzpicture}
\qquad
\begin{tikzpicture}[baseline = 24mm]
\node[above] at (0.3,4) {$\bfS^{\sigma'}_{\lambda(\beta),C}$};
\node at (0,3) {
\begin{ytableau}
2 & 1 \\
4 & 3 \\
5
\end{ytableau}} ;
\node at (0.7-\hp,3.25) {} edge [out=40,in=320, loop] ();
\node at (1.4-\hp,3.7) {\small $\pi_1,\pi_3$};

\draw [->] (0,1.75+\vp) -- (0,1.15+\vp) node[below]  {$0$};
;
\node[right] at (0.,1.7) {\small $\pi_2,\pi_4$};
\end{tikzpicture}
\]
The above figure shows that $\Ups: X_{(2,1,2)} \ra \mphi[\bfS^{{312}}_{{(2,2,1)},C}]$ is an $H_5(0)$-module isomorphism, and 
the map $\Ups: X_{(1,2,2)} \ra \mphi[\bfS^{{123}}_{{(2,2,1)},C}]$ is a surjective $H_5(0)$-module homomorphism with a nonzero kernel.
\end{example}

\begin{remark}\label{rem: ker of Ups}
We can give an explicit description of the kernel of $\Ups : X_\alpha \rightarrow \mphi[\bfS^{{\sigma}}_{{\ldalpha},C}]$, which is the $\C$-span of $\{\sfT \in \SET(\alpha) \mid \Ups(\sfT) = 0\}$.
Note that $\Ups(\sfT) = 0$ if and only if $\tau_\sfT$ does not satisfy the triple condition.

Define $\Xi_\alpha$ to be the subset of $\SET(\alpha)$ consisting of $\sfT$'s with a triple $(i,j,k)$ such that
\begin{enumerate}[label = {\bf K\arabic*.}]
  \item $i < j$ and $\sigma^{-1}(i) < \sigma^{-1}(j)$, and
  \item $\sfT_{i, k} < \sfT_{j, k+1}$ and $\sfT_{i,k+1} > \sfT_{j,k+1}$.
\end{enumerate}
For any $\sfT \in \Xi_\alpha$,
$\tau_\sfT$ is not an SPCT since
the boxes $(\sigma^{-1}(i),k+1)$, $(\sigma^{-1}(j),k)$ and $(\sigma^{-1}(j),k+1)$ in $\tau_\sfT$ does not satisfy the triple condition.
So $\Xi_\alpha \subseteq \ker(\Ups)$.
On the other hand,
if $\sfT \in \ker(\Ups)$, then there exist three boxes $(i, k), (i, k+1)$ and $(j, k+1)$ in $\tau_\sfT$ such that 
\[
i < j, \quad (\tau_\sfT)_{i,k} > (\tau_\sfT)_{j,k+1}, \quad \text{and}  \quad (\tau_\sfT)_{i,k+1} < (\tau_\sfT)_{j,k+1} \, .
\]
Note that $(i,k+1)$ is in $\tcd(\ldalpha)$ since $\tau_\sfT$ is of partition shape.
Moreover, we can see that $\sigma(i) < \sigma(j)$. 
Otherwise, from the construction of $\Ups$ it follows that $\sfT_{\sigma(i),k+1} > \sfT_{\sigma(j),k+1}$, which contradicts the column condition for $\SET$.
Consequently the triple $(\sigma(i), \sigma(j), k)$ satisfies both \textbf{K1} and \textbf{K2}.
So $\sfT \in \Xi_\alpha$.
\end{remark}

We close this subsection with a sufficient and necessary condition for $\Ups : X_\alpha \rightarrow \mphi[\bfS^{{\sigma}}_{{\ldalpha},C}]$ to be an isomorphism. 
To begin with, we recall the notion of $\sigma$-simple compositions, which was introduced in~\cite{20CKNO} to characterize indecomposable $\bfS^\sigma_\alpha$'s.

\begin{definition}\label{def: simple}
  Let $\alpha = (\alpha_1, \alpha_2, \ldots, \alpha_\ell)$ be compatible with $\sigma$ for $\sigma \in \SG_{\ell}$.
\begin{enumerate}
  \item[(a)] For $i < j$,
  if $\sigma(i) < \sigma(j)$ and $\alpha_i \geq \alpha_j \geq 2$,
  then $(i, j)$ is called {\it a permutation-ascending composition-descending $($PACD$)$ pair attached to the pair $(\alpha \, ; \sigma)$}.
  \item[(b)] We say that $\alpha$ is \emph{$\sigma$-simple} if every PACD pair $(i,j)$ attached to $(\alpha \, ; \sigma)$ satisfies one of the following conditions:
      \begin{enumerate}
        \item[(i)] There exists $i < k < j$ such that
        $\sigma(i) < \sigma(k) < \sigma(j)$ and $\alpha_k = \alpha_j -1$.
        \item[(ii)] There exists $k > j$ such that
         $\sigma(i) < \sigma(k) < \sigma(j)$ and $\alpha_k = \alpha_j$.
      \end{enumerate}
\end{enumerate}
\end{definition}

\begin{corollary}\label{cor: X and S isomorphic}
Let $\alpha \models n$ and $\sigma \in \SG_{\ell(\alpha)}$ satisfying that $\lambda(\alpha) = \alpha^\rmr \cdot \sigma$.
Then $X_\alpha$ is isomorphic to $\mphi[\bfS^{{\sigma}}_{{\ldalpha},C}]$ as an $H_n(0)$-module if and only if ${\ldalpha}$ is ${\sigma}$-simple.
In particular, if $\alpha$ is a partition, then $X_\alpha$ is isomorphic to $\phi[\bfS^{w_0}_\alpha]$.
\end{corollary}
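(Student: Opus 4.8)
The plan is to convert the isomorphism question into a combinatorial statement about standard extended tableaux and then to settle it.

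First I would invoke the surjectivity of $\Ups\colon X_\alpha\to\mphi[\bfS^\sigma_{\ldalpha,C}]$ from Theorem~\ref{thm: X isom to bfS}: since both modules are finite dimensional, $\Ups$ is an isomorphism if and only if $\ker(\Ups)=0$, and by Remark~\ref{rem: ker of Ups} this is equivalent to $\Xi_\alpha=\emptyset$. Note that $\ldalpha$, being weakly decreasing, is compatible with every $\sigma\in\SG_{\ell(\alpha)}$, so $\SPCT^\sigma(\ldalpha)\neq\emptyset$ by Proposition~\ref{Prop: comparibility} and the statement is non-vacuous. Thus the whole corollary reduces to the claim that $\Xi_\alpha=\emptyset$ if and only if $\ldalpha$ is $\sigma$-simple.

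The second step is an observation special to partition shapes: for a partition $\lambda$, being $\sigma$-simple is equivalent to having \emph{no} PACD pair attached to $(\lambda\,;\sigma)$ at all. Indeed, condition (i) of Definition~\ref{def: simple} can never be satisfied when $\lambda$ is weakly decreasing; and if some PACD pair existed, one could choose one, say $(a,b)$, with $b$ maximal, and condition (ii) would then furnish $k>b$ with $\sigma(a)<\sigma(k)<\sigma(b)$ and $\lambda_k=\lambda_b\ge 2$ — but then $(a,k)$ is again a PACD pair with larger second index, a contradiction. So, writing $\lambda:=\ldalpha$, the goal becomes: $\Xi_\alpha=\emptyset$ if and only if there is no pair $i<j$ with $\sigma(i)<\sigma(j)$ and $\lambda_j\ge 2$.

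For the combinatorial core I would use the correspondence $\sfT\mapsto\tau_\sfT$. For $\sfT\in\SET(\alpha)$ the filling $\tau_\sfT$ of $\tcd(\lambda)$ has strictly decreasing rows, and — as one checks directly against $\tauC$ — in each of its columns the entries are ordered according to the values of $\sigma$ on the occupied rows; conversely every filling of $\tcd(\lambda)$ with these two properties equals $\tau_\sfT$ for a unique $\sfT\in\SET(\alpha)$. Hence $\SET(\alpha)$ is identified with the set of linear extensions of the poset $P$ on the cells of $\tcd(\lambda)$ generated by "one cell to the right along a row carries a strictly smaller entry" and "in a fixed column, a smaller value of $\sigma$ carries a smaller entry". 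Now, for "$\Leftarrow$" (contrapositive): given $i<j$ with $\sigma(i)<\sigma(j)$ and $\lambda_j\ge 2$, set $k:=\lambda_j-1$; the column coordinate is non-increasing along increasing chains of $P$, so $(i,k)$ is not $\le_P(j,k+1)$ and some linear extension $\tau$ has $\tau_{i,k}>\tau_{j,k+1}$, while $\tau_{i,k+1}<\tau_{j,k+1}$ holds in every linear extension since $(i,k+1)$ and $(j,k+1)$ share a column with $\sigma(i)<\sigma(j)$; thus $\tau$ violates the triple condition and the associated $\SET$ lies in $\Xi_\alpha$. For "$\Rightarrow$" (contrapositive), which is essentially Remark~\ref{rem: ker of Ups}, an element of $\Xi_\alpha$ produces a triple-condition failure in $\tau_\sfT$, which on a partition shape with $\sigma$-ordered columns is exactly a PACD pair. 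The step I expect to require the most care is precisely this poset description — one must verify that \emph{every} column of $\tau_\sfT$, not merely the first, is ordered by $\sigma$, for only then are the cells $(i,k)$ and $(j,k+1)$ genuinely incomparable and a witnessing $\SET$ available.

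Combining the two steps gives $\Xi_\alpha=\emptyset\iff\ldalpha$ is $\sigma$-simple, which is the first assertion. For the last sentence, if $\alpha$ is a partition then $\alpha^\rmr\cdot w_0=\alpha=\ldalpha$, so $\sigma=w_0$ is admissible; since $w_0$ reverses order there is no PACD pair attached to $(\alpha\,;w_0)$, hence $\alpha$ is $w_0$-simple and the first part yields $X_\alpha\cong\mphi[\bfS^{w_0}_{\alpha,C}]$. Finally, $\alpha$ being $w_0$-simple, $\bfS^{w_0}_\alpha$ is indecomposable by the characterization of~\cite{20CKNO}, hence coincides with its canonical summand $\bfS^{w_0}_{\alpha,C}$, so $X_\alpha\cong\mphi[\bfS^{w_0}_\alpha]$.
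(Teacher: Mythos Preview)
Your proof is correct and follows the same overall structure as the paper's: both reduce the isomorphism question to $\ker(\Ups)=\Xi_\alpha=\emptyset$ via Theorem~\ref{thm: X isom to bfS} and Remark~\ref{rem: ker of Ups}, both observe that for a partition $\lambda$ being $\sigma$-simple is equivalent to having no PACD pair at all, and both handle the ``in particular'' clause the same way.

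The one substantive difference is in producing a witness in $\Xi_\alpha$ from a PACD pair $(i,j)$. You set $k=\lambda_j-1$ and argue abstractly via linear extensions of a poset on the cells of $\tcd(\lambda)$, using that the column coordinate is non-increasing along chains to guarantee incomparability of $(i,k)$ and $(j,k+1)$. The paper instead writes down one explicit tableau: let $\sfT_\alpha\in\SET(\alpha)$ be obtained by filling $\trcd(\alpha)$ with $1,2,\ldots,n$ from bottom to top and from left to right, and checks directly that the triple $(\sigma(i),\sigma(j),1)$ satisfies \textbf{K1} and \textbf{K2}. The paper's construction is shorter and avoids the poset formalism (in particular, it sidesteps the step you flagged as delicate, namely verifying that \emph{every} column of $\tau_\sfT$ is $\sigma$-ordered and that this yields a bijection with linear extensions). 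Your argument, on the other hand, makes transparent \emph{why} a witness must exist and would adapt more readily if one wanted to describe all of $\Xi_\alpha$ rather than exhibit a single element.
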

\begin{proof}
We first observe that a partition $\mu$ is $\rho$-simple for some permutation $\rho \in \SG_{\ell(\mu)}$
if and only if there does not exist a PACD pair attached to $(\mu \, ; \rho)$.

In order to prove the ``if" part of the assertion, we suppose that ${\ldalpha}$ is ${\sigma}$-simple.
If there exists $\sfT \in \SET(\alpha)$ contained in $\ker(\Ups)$,
then there is a triple $(i,j,k)$ satisfying both \textbf{K1} and \textbf{K2} in Remark~\ref{rem: ker of Ups}.
In this case one can see that $(\sigma^{-1}(i) , \sigma^{-1}(j))$ is a PACD pair attached to $({\ldalpha} \, ; \sigma)$, which is absurd.
Thus $\ker(\Ups) = 0$.

For the ``only if" part of the assertion,
we assume that ${\ldalpha}$ is not ${\sigma}$-simple.
Then we can choose a PACD pair $(i,j)$ attached to $({\ldalpha} \, ; \sigma)$, that is, $i<j$, $\sigma(i) < \sigma(j)$ and ${\ldalpha}_i \geq {\ldalpha}_j \geq 2$.
Let $\sfT_\alpha$ be the tableau in $\SET(\alpha)$ filled with entries $1, 2, \ldots, n$ from bottom to top and from left to right.
Then the triple $(\sigma(i), \sigma(j), 1)$ in $\sfT_\alpha$ satisfies both \textbf{K1} and \textbf{K2},
and hence $\sfT_\alpha \in \ker(\Ups)$.
This proves the contraposition of the ``only if'' part.
\end{proof}

\section{A series of surjections starting from $\bfSsaC$}
\label{sec: surj from bfSsaC}

In this section, we investigate surjections starting from $\bfSsaC$. We give a surjection from $\bfSsaC$ to $\bfS^{\sigma s_i}_{\alpha \cdot s_i,C}$ when $\ell(\sigma s_i) < \ell(\sigma)$.
Then we define standard permuted Young composition tableaux ($\SPYCT$'s).
Using these combinatorial objects, we introduce a new $H_n(0)$-module $\hbfS^\sigma_{\alpha}$, which is isomorphic to $\mphi[\bfS^{\sigma^{w_0}}_{\alpha^\rmr}]$ with $\sigma^{w_0} = w_0 \sigma w_0^{-1}$.
As in the previous section, $\mphi[\bfS^{\sigma^{w_0}}_{\alpha^\rmr}]$ denotes the $\phi$-twist of $\bfS^{\sigma^{w_0}}_{\alpha^\rmr}$.
Combining the results of Section~\ref{Section3} and Section~\ref{sec: surj from bfSsaC}, we derive a series of surjections starting from $\bfP_\alpha$ and ending at $\hbfS^{\id}_{\alpha,C}$ (see Corollary~\ref{Coro:Seq_of_PVXS}).

\subsection{A surjection from $\bfSsaC$ to $\bfS^{\sigma s_i}_{\alpha \cdot s_i,C}$ when $\ell(\sigma s_i) < \ell(\sigma)$}\label{subsec: surj from SPCTs}

Note that $H_k(0)$ acts on the right on $\C[\Z^k]$, the $\C$-span of $\Z^k$, by
\begin{align}\label{eq: bubble sorting operator}
\bfm \bubact \pi_i = \begin{cases}
\bfm \cdot s_i & \text{if $m_i < m_{i+1}$,}\\
\bfm & \text{otherwise}
\end{cases}
\end{align}
for $1\le i \le k-1$ and $\bfm = (m_1,\ldots,m_k) \in \Z^k$. 
Here $\bfm \cdot s_i$ is obtained from $\bfm$ by swapping $m_i$ and $m_{i+1}$.
Obviously, this action restricts to the $\C$-span of compositions of length $k$.

In this subsection, we will assume that $\alpha \models n$ is compatible with $\sigma \in \SG_{\ell(\alpha)}$. 
It was shown in~\cite{20CKNO} that 
for an index $1 \le i \le \ell(\alpha) - 1$ such that $\ell(\sigma s_i) < \ell(\sigma)$, we have 
\begin{align*}
\ch([\bfSsa]) = \sum_{\alpha = \beta \bubact \pi_{i}} \ch([\bfS^{\sigma s_i}_\beta]).
\end{align*}
It is currently unknown whether this characteristic relation can be lifted to the level of $H_n(0)$-modules.
In the following we provide an interesting relation between $\bfSsaC$ and $\bfS^{\sigma s_i}_{\alpha \cdot s_i,C}$ that can be regarded as a first step towards solving this problem.

Suppose that there is an index $1 \le i \le \ell(\alpha) - 1$, which will be fixed in the rest of this subsection, such that $\ell(\sigma s_i) < \ell(\sigma)$, that is, $\sigma(i) > \sigma(i+1)$.
By the $\sigma$-compatibility of $\alpha$, we have that $\alpha_{i} \ge \alpha_{i+1}$.

Define a map 
\[
\mpsi^{(i)}: \SPCTsa \ra \coprod_{\alpha = \beta \bubact \pi_i} \SPCT^{\sigma s_i}(\beta),
\]
where $\mpsi^{(i)}(\tau)$ is defined as follows:
\begin{enumerate}[label = {$\bullet$}]
\item In case where $\tau_{i+1,j} < \tau_{i,j+1}$ for all $1 \le j \le \alpha_i$, define $\mpsi^{(i)}(\tau)$ to be the SPCT of shape $\alpha \cdot s_i$ and type $\sigma s_i$ obtained from $\tau$ by swapping the $i$th row and the $(i+1)$st row.
\item 
Otherwise, let $j_0$ be the smallest integer such that $\tau_{i+1,j_0} > \tau_{i,j_0+1}$. 
In this case, define $\mpsi^{(i)}(\tau)$ to be the SPCT of shape $\alpha$ and type $\sigma s_i$ obtained from $\tau$ by swapping $\tau_{i,j}$ and $\tau_{i+1,j}$ for all $1 \le j \le j_0$.
\end{enumerate}

Let $\lambda$ be a partition and $\sigma_1, \sigma_2 \in \SG_{\ell(\lambda)}$. In~\cite{20CKNO}, the authors constructed a map $\mpsi_{\sigma_1}^{\sigma_2}$ from $\coprod_{\tal = \lambda} \SPCT^{\sigma_1}(\alpha)$ to $\coprod_{\tal = \lambda} \SPCT^{\sigma_2}(\alpha)$ in a combinatorial manner.
Using~\cite[Lemma 4.5]{20CKNO}, it is not difficult to show that $\mpsi^{(i)}(\tau)$ is identical to $\mpsi_{\sigma}^{\sigma s_i}(\tau)$.
Combining this with~\cite[Proposition 4.6]{20CKNO}, we deduce that $\mpsi^{(i)}$ is well-defined and bijective.

\begin{example}
Given $\alpha = (3,4,2)$ and $\sigma = 231$, let
\[
\tau = \begin{array}{l}
\begin{ytableau}
6 & 3 & 2 \\
9 & 8 & 7 & 5 \\
4 & 1
\end{ytableau} 
\end{array} \in \SPCT^\sigma(\alpha).
\]
Since $\tau_{3,j} < \tau_{2,j+1}$ for all $1 \le j \le \alpha_2$, 
\[
\psi^{(2)}(\tau) = \begin{array}{l}
\begin{ytableau}
6 & 3 & 2 \\
4 & 1 \\
9 & 8 & 7 & 5
\end{ytableau} 
\end{array} \in \SPCT^{\sigma s_2}({\alpha\cdot s_2}).
\]
On the other hand, since $\psi^{(2)}(\tau)_{2,1} > \psi^{(2)}(\tau)_{1,2}$, $j_0 = 1$ and thus
\[
\psi^{(1)}\circ\psi^{(2)}(\tau) = \begin{array}{l}
\begin{ytableau}
4 & 3 & 2 \\
6 & 1 \\
9 & 8 & 7 & 5
\end{ytableau} 
\end{array}\in \SPCT^{\sigma s_2 s_1}({\alpha\cdot s_2s_1}).
\]
\end{example}

We define a linear map $\mPsi^{(i)}: \bfSsa \ra \bfS^{\sigma s_i}_{\alpha \cdot s_i}$ by
\[
\mPsi^{(i)}(\tau) := \begin{cases}
\mpsi^{(i)}(\tau) & \text{if $\mpsi^{(i)}(\tau) \in \SPCT^{\sigma s_i}(\alpha \cdot s_i)$},\\
0 & \text{otherwise.}
\end{cases}
\]
Let $\iota_C: \bfSsaC \ra \bfSsa$ be the inclusion map and $\pr_C: \bfS^{\sigma s_i}_{\alpha \cdot s_i} \ra \bfS^{\sigma s_i}_{\alpha \cdot s_i,C}$ the projection. Finally, we define a linear map
\[
\mPsi^{(i)}_C: \bfSsaC \ra \bfS^{\sigma s_i}_{\alpha \cdot s_i,C}, \quad 
v \mapsto (\pr_C \circ \mPsi^{(i)} \circ \iota_C) (v).
\]

\begin{theorem}\label{thm: surj btw canonical}
For $1 \le i \le \ell(\alpha) - 1$ such that $\ell(\sigma s_i) < \ell(\sigma)$,
the map $\mPsi^{(i)}_C: \bfSsaC \ra \bfS^{\sigma s_i}_{\alpha \cdot s_i,C}$ is a surjective $H_n(0)$-module homomorphism.
\end{theorem}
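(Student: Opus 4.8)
The plan is to peel off the outer maps and then settle two claims about $\mPsi^{(i)}$. Since $\iota_C : \bfSsaC \hookrightarrow \bfSsa$ is the inclusion of a direct summand and $\pr_C : \bfS^{\sigma s_i}_{\alpha \cdot s_i} \to \bfS^{\sigma s_i}_{\alpha \cdot s_i, C}$ is the projection onto a direct summand, both are $H_n(0)$-module homomorphisms. Hence, once we know that $\mPsi^{(i)} : \bfSsa \to \bfS^{\sigma s_i}_{\alpha \cdot s_i}$ is an $H_n(0)$-module homomorphism and that $\mPsi^{(i)}_C = \pr_C \circ \mPsi^{(i)} \circ \iota_C$ carries a generator of $\bfSsaC$ to a generator of $\bfS^{\sigma s_i}_{\alpha \cdot s_i, C}$, we are done: $\mPsi^{(i)}_C$ is a composite of module maps and its image is a submodule of the cyclic module $\bfS^{\sigma s_i}_{\alpha \cdot s_i, C}$ containing a generator, hence everything.

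For the homomorphism property, the structural fact I would record first is that $\mpsi^{(i)}$ never changes the column of an entry: in the first branch it transposes rows $i$ and $i+1$ entirely, and in the second branch it swaps $\tau_{i,j}$ with $\tau_{i+1,j}$ for $j \le j_0$, so in every case the box carrying a given value keeps its column and only its row index, within $\{i,i+1\}$, may change. In particular $\Des(\mpsi^{(i)}(\tau)) = \Des(\tau)$, and the column data together with the affected rows pin down exactly when a descent $j$ changes its attacking/nonattacking status and when the relative horizontal positions of $j$ and $j+1$ change. Equivalently, I would show that the linear extension of the bijection $\mpsi^{(i)}$ is an $H_n(0)$-module homomorphism $\bfSsa \to \bigoplus_{\alpha = \beta \bubact \pi_i} \bfS^{\sigma s_i}_\beta$, of which $\mPsi^{(i)}$ is the composite with projection onto the $\beta = \alpha \cdot s_i$ summand; this reinterprets the vanishing branch of $\mPsi^{(i)}$ as an honest projection.

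The verification of $\mPsi^{(i)}(\pi_j \cdot \tau) = \pi_j \cdot \mPsi^{(i)}(\tau)$ then splits along the three cases of~\eqref{eq: Hecke action on SPCT} for $\pi_j$. When $\pi_j \cdot \tau \in \{\tau, 0\}$, column preservation reduces matters to checking that $j$ is again a non-descent, respectively an attacking descent, of $\mpsi^{(i)}(\tau)$ when $\mpsi^{(i)}(\tau)$ is a genuine tableau of shape $\alpha \cdot s_i$, together with the statement that $\mpsi^{(i)}(\pi_j\cdot\tau)$ again has shape $\alpha$ whenever $\mpsi^{(i)}(\tau)$ does (so that both sides vanish); here the well-definedness of $\mpsi^{(i)}$, that is the fact from \cite{20CKNO} that its outputs are valid $\SPCT$'s, already forces these local configurations to behave. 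The case $\pi_j \cdot \tau = s_j \cdot \tau$ is the substantial one: it requires that $\mpsi^{(i)}$ intertwine the transposition of $j$ and $j+1$, that is $\mpsi^{(i)}(s_j \cdot \tau) = s_j \cdot \mpsi^{(i)}(\tau)$ up to the possible shape change, and it requires the invariance lemma that whether $\mpsi^{(i)}(\tau)$ has shape $\alpha$ or $\alpha \cdot s_i$ is unchanged on passing to $s_j \cdot \tau$ at a nonattacking descent $j$; the latter I would extract from the explicit description of $\mpsi^{(i)}$ and the minimality of $j_0$. Throughout, the identification $\mpsi^{(i)} = \mpsi^\sigma_{\sigma s_i}$ recorded before the statement, together with \cite[Lemma 4.5]{20CKNO} and \cite[Proposition 4.6]{20CKNO}, is what keeps this bookkeeping under control.

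For surjectivity I would evaluate $\mPsi^{(i)}_C$ on the canonical source tableau $\tauC$, which generates $\bfSsaC$ by Theorem~\ref{lem: unique source and sink}. Since $\sigma(i) > \sigma(i+1)$ and, by the compatibility of $\alpha$ with $\sigma$, $\alpha_i \ge \alpha_{i+1}$, in $\tauC$ the $i$th row is a decreasing block of consecutive integers all larger than the decreasing block forming the $(i+1)$st row; thus $\tau_{i+1,j} < \tau_{i,j+1}$ for every admissible $j$, so $\mpsi^{(i)}$ acts through its first branch and transposes rows $i$ and $i+1$. Comparing with the defining recipe for canonical source tableaux shows that the outcome is exactly the canonical source tableau of $\SPCT^{\sigma s_i}(\alpha \cdot s_i)$: the row lengths become $\alpha \cdot s_i$, the standardized first column becomes $\sigma s_i$, and the block-to-row assignment matches. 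In particular $\mPsi^{(i)}(\tauC)$ already lies in $\bfS^{\sigma s_i}_{\alpha \cdot s_i, C}$, so $\pr_C$ fixes it and $\mPsi^{(i)}_C(\tauC)$ generates $\bfS^{\sigma s_i}_{\alpha \cdot s_i, C}$; surjectivity follows. I expect the genuine obstacle to be the case $\pi_j \cdot \tau = s_j \cdot \tau$, and within it the partial-swap situation $j_0 < \alpha_i$ of the second branch, where one must control how the transposition of $j$ and $j+1$ interacts with the cutoff column $j_0$, keep the triple condition intact in the target, and confirm that the shape dichotomy is stable.
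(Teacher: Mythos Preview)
Your strategy of peeling off $\iota_C$ and $\pr_C$ as ``outer'' homomorphisms and then proving that $\mPsi^{(i)}$ itself is an $H_n(0)$-module homomorphism does not work: $\mPsi^{(i)}$ is \emph{not} a homomorphism in general, and neither is the linear extension of the bijection $\mpsi^{(i)}$. Take $\alpha=(2,2)$, $\sigma=s_1$, $i=1$, so that $\alpha\cdot s_i=\alpha$ and $\mPsi^{(1)}=\mpsi^{(1)}$. The canonical source tableau is
\[
A=\begin{ytableau}4&3\\2&1\end{ytableau},
\qquad
\mpsi^{(1)}(A)=\begin{ytableau}2&1\\4&3\end{ytableau}.
\]
Here $2$ is a nonattacking descent of $A$, so $\pi_2\cdot A=s_2\cdot A=B=\begin{ytableau}4&2\\3&1\end{ytableau}$ and $\mpsi^{(1)}(B)=\begin{ytableau}3&2\\4&1\end{ytableau}$, a genuine basis element. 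But in $\mpsi^{(1)}(A)$ the entry $2$ sits at $(1,1)$ with $3$ at $(2,2)$, so $2$ has become an \emph{attacking} descent and $\pi_2\cdot\mpsi^{(1)}(A)=0$. Thus $\mPsi^{(1)}(\pi_2\cdot A)\neq \pi_2\cdot\mPsi^{(1)}(A)$. The ``invariance lemma'' you hope for---that a nonattacking descent stays nonattacking after $\mpsi^{(i)}$---is exactly what fails, and it fails already inside the canonical class.

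What rescues the theorem is that $\pr_C$ is not merely a formal outer map: it kills $\mpsi^{(1)}(B)$, because the standardized second column word of $\mpsi^{(1)}(B)$ is $21$ rather than the $12$ required by the canonical class of $\bfS^{\id}_{(2,2)}$. The paper's proof works directly with $\mPsi^{(i)}_C$ and, at precisely this step, uses that $\pi_j\cdot\tau$ lies in the canonical class to force the columns of $\mpsi^{(i)}(\pi_j\cdot\tau)$ to have inconsistent standardizations, so that $\pr_C$ annihilates it. In other words, the restriction $\iota_C$ and the projection $\pr_C$ interact nontrivially in the verification of $H_n(0)$-equivariance; you cannot decouple them. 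Your surjectivity argument via $\tauC\mapsto\tauC$ is fine and is a pleasant alternative to the paper's explicit preimage construction, but the homomorphism part has to be redone along the lines above.
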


\begin{proof}
First, we show that the map $\mPsi^{(i)}_C$ is surjective. 
Let $\tau$ be an SPCT in $\bfS^{\sigma s_i}_{\alpha \cdot s_i,C}$. 
Let $\otau$ be the filling obtained from $\tau$ by swapping the $i$th row and the $(i+1)$st row. 
The only nontrivial part in showing $\otau \in \bfSsaC$ is to verify that $\otau$ satisfies the triple condition at the boxes in the $i$th and $(i+1)$st row.
Suppose that there is $k \ge 1$ such that $\otau_{i,k} > \otau_{i+1,k+1}$ and $\otau_{i,k+1} < \otau_{i+1,k+1}$, equivalently, $\tau_{i+1,k} > \tau_{i,k+1}$ and $\tau_{i+1,k+1} < \tau_{i,k+1}$. 
However, the latter inequality cannot occur. 
This is because the conditions $\sigma(i) > \sigma(i+1)$ and $\tau \in \bfS^{\sigma s_i}_{\alpha \cdot s_i,C}$ imply that $\tau_{i+1,j} > \tau_{i,j}$ for every $1 \le j \le \min\{\alpha_i, \alpha_{i+1}\}$.
Hence, $\otau$ is an SPCT in $\bfS^{\sigma}_{\alpha,C}$.
By the triple condition, we have $\tau_{i,j} < \tau_{i+1,j+1}$ for $1\le j \le \min\{\alpha_i, \alpha_{i+1}\}$, equivalently, $\otau_{i+1,j} < \otau_{i,j+1}$ for $1\le j \le \min\{\alpha_i, \alpha_{i+1}\}$, thus $\psi^{(i)}(\otau) = \tau$.
As a consequence, $\mPsi^{(i)}_C(\otau) = \tau$.

Next, let us prove that $\mPsi^{(i)}_C$ is an $H_n(0)$-module homomorphism.
Let $1 \le j < n$ and $\tau \in \bfS^{\sigma}_{\alpha,C}$.
We have two cases.

{\it Case 1: $j$ is in the left-adjacent column of $j+1$ in $\tau$.}
If $j$ or $j+1$ is neither in the $i$th row nor in the $(i+1)$st row, then the relative position of $j$ and $j+1$ does not change. 
In this case $\pi_j \cdot \mPsi^{(i)}_C(\tau) = \mPsi^{(i)}_C(\pi_j \cdot \tau)$, so we are done. 

Let us deal with the remaining case where $j, j+1$ are in row $i$ or row $(i+1)$, but they are positioned in distinct rows.
Since $\sigma(i) > \sigma(i+1)$, we have $\tau_{i,1} > \tau_{i+1,1}$.
By the triple condition, $\tau_{i,k} > \tau_{i,k+1} > \tau_{i+1,k+1}$ for all $1 \le k < \min\{\alpha_i,\alpha_{i+1}\}$. 
Therefore $j$ appears in the $(i+1)$st row and $j+1$ in the $i$th row of $\tau$.
Let $j$ be in the $k$th column, that is, $j = \tau_{i+1,k}$ and $j+1 = \tau_{i,k+1}$.
If the positions of $j$ and $j+1$ are invariant under $\mpsi^{(i)}$, then there is nothing to prove.
Otherwise, $j$ appears at $(i,k)$ and $j+1$ at $(i+1,k+1)$ in $\mpsi^{(i)}(\tau)$. 
This means that $j$ is an attacking descent of $\mpsi^{(i)}(\tau)$, thus $\pi_j \cdot \mPsi^{(i)}_C(\tau) = 0$. 
On the other hand, in $\pi_j \cdot \tau$, $j$ appears at $(i,k+1)$ and $j+1$ at $(i+1,k)$.
By the definition of $\mpsi^{(i)}$, we deduce that
the $(k+1)$st column of $\mpsi^{(i)}(\pi_j \cdot \tau)$ is equal to that of $\pi_j \cdot \tau$.
Combining this with the conditions $\sigma(i) > \sigma(i+1)$ and $\pi_j \cdot \tau$ is in the canonical class yields
\[
(\mpsi^{(i)}(\pi_j \cdot \tau))_{i,k+1} > (\mpsi^{(i)}(\pi_j \cdot \tau))_{i+1,k+1}.
\]
One can see that $(\mpsi^{(i)}(\pi_j \cdot \tau))_{i,1} < (\mpsi^{(i)}(\pi_j \cdot \tau))_{i+1,1}$, thus $\mpsi^{(i)}(\pi_j \cdot \tau)$ is not in the canonical class.
Hence $\mPsi^{(i)}_C(\pi_j \cdot \tau) = 0$.

{\it Case 2: $j$ is not in the left-adjacent column of $j+1$ in $\tau$.}
Note that for $\tau \in \SPCTsa$, the set of entries in the $p$th column of $\tau$ is equal to that in the $p$th column of $\psi^{(i)}(\tau)$ for $p \ge 1$.
This implies that $j$ is also not in the left adjacent column of $j+1$ in $\mpsi^{(i)}(\tau)$. 
Therefore $\pi_j \cdot \mPsi^{(i)}_C(\tau) = \mPsi^{(i)}_C(\pi_j \cdot \tau)$.
\end{proof}

As an immediate consequence of Theorem~\ref{thm: surj btw canonical}, we derive a series of surjections starting from a $\bfSsaC$.
\begin{corollary}\label{cor: seq of surj}
For any reduced expression $s_{i_1}\cdots s_{i_k}$ of $\sigma$, 
we have the following series of surjections:
\[
\begin{tikzcd}
\bfS^{\sigma}_{\alpha,C} \arrow[r, two heads] &
\bfS^{\sigma s_{i_k}}_{\alpha \cdot s_{i_k},C} \arrow[r, two heads] &
\bfS^{\sigma s_{i_k} s_{i_{k-1}}}_{\alpha \cdot s_{i_k}s_{i_{k-1}},C} \arrow[r, two heads] &
\cdots \arrow[r, two heads] &
\bfS^{\id}_{\alpha \cdot \sigma^{-1},C}
\end{tikzcd}
\]
\end{corollary}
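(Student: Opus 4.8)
The plan is simply to iterate Theorem~\ref{thm: surj btw canonical} $k$ times along the given reduced expression. Write $\sigma = s_{i_1}\cdots s_{i_k}$, set $\sigma^{(0)} := \sigma$ and $\alpha^{(0)} := \alpha$, and for $1\le j\le k$ put
\[
\sigma^{(j)} := \sigma\, s_{i_k} s_{i_{k-1}}\cdots s_{i_{k-j+1}},\qquad
\alpha^{(j)} := \alpha\cdot s_{i_k} s_{i_{k-1}}\cdots s_{i_{k-j+1}}.
\]
Cancellation collapses the tail, so $\sigma^{(j)} = s_{i_1}\cdots s_{i_{k-j}}$; being a prefix of a reduced word, this expression is itself reduced, whence $\ell(\sigma^{(j)}) = k-j$. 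Hence for $0\le j\le k-1$ we get $\sigma^{(j)} s_{i_{k-j}} = s_{i_1}\cdots s_{i_{k-j-1}} = \sigma^{(j+1)}$ with $\ell(\sigma^{(j)} s_{i_{k-j}}) = k-j-1 < \ell(\sigma^{(j)})$, i.e.\ $\sigma^{(j)}(i_{k-j}) > \sigma^{(j)}(i_{k-j}+1)$.

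Before invoking Theorem~\ref{thm: surj btw canonical} I would record that each pair $(\alpha^{(j)},\sigma^{(j)})$ is compatible, equivalently (Proposition~\ref{Prop: comparibility}) that $\SPCT^{\sigma^{(j)}}(\alpha^{(j)})\neq\emptyset$, so that the canonical submodule $\bfS^{\sigma^{(j)}}_{\alpha^{(j)},C}$ is defined and generated by its canonical source tableau. This holds for $j=0$ by hypothesis, and a direct check from the definition of $\sigma$-compatibility (Subsection~\ref{subsec: SPCT}) shows that whenever $\alpha'$ is compatible with $\sigma'$ and $\sigma'(i)>\sigma'(i+1)$ the composition $\alpha'\cdot s_i$ is compatible with $\sigma' s_i$; applying this with $(\alpha',\sigma',i)=(\alpha^{(j)},\sigma^{(j)},i_{k-j})$ and using the previous paragraph propagates compatibility by induction on $j$.

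With both points in place, Theorem~\ref{thm: surj btw canonical} applies at each stage and yields surjective $H_n(0)$-module homomorphisms
\[
\mPsi^{(i_{k-j})}_C\colon \bfS^{\sigma^{(j)}}_{\alpha^{(j)},C}\ \twoheadrightarrow\ \bfS^{\sigma^{(j+1)}}_{\alpha^{(j+1)},C},\qquad 0\le j\le k-1.
\]
Since $\sigma^{(k)} = s_{i_1}\cdots s_{i_k}\,s_{i_k}\cdots s_{i_1} = \id$ and $\alpha^{(k)} = \alpha\cdot(s_{i_k}\cdots s_{i_1}) = \alpha\cdot\sigma^{-1}$, composing these $k$ maps produces exactly the asserted series of surjections ending at $\bfS^{\id}_{\alpha\cdot\sigma^{-1},C}$. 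The argument is essentially bookkeeping; the only step needing a moment's care --- the ``main obstacle'', such as it is --- is confirming that the two hypotheses of Theorem~\ref{thm: surj btw canonical} persist all the way along the chain, namely that every intermediate type $\sigma^{(j)}$ still has a descent at the position $i_{k-j}$ (immediate from the prefix-of-a-reduced-word property) and that every intermediate pair $(\alpha^{(j)},\sigma^{(j)})$ remains compatible, so that all the intermediate canonical submodules genuinely exist.
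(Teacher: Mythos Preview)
Your argument is correct and is exactly the approach the paper intends: the corollary is stated as an immediate consequence of Theorem~\ref{thm: surj btw canonical}, and the paper gives no further proof. You have simply written out the iteration carefully, and in fact supplied the two verifications the paper leaves implicit --- that each prefix $\sigma^{(j)}=s_{i_1}\cdots s_{i_{k-j}}$ has a descent at position $i_{k-j}$, and that compatibility of $\alpha^{(j)}$ with $\sigma^{(j)}$ is preserved along the chain --- so there is nothing to add.
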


\subsection{The description of the $\phi$-twist of $\bfS^{\sigma}_{\alpha}$ in terms of standard permuted Young composition tableaux}
\label{subsec: SPYCT}

Standard Young composition tableaux were introduced in~\cite{13LMvW} as a
combinatorial model for the image of the quasisymmetric Schur functions under the automorphism $\rho: \Qsym \ra \Qsym$, $F_\alpha \mapsto F_{\alpha^\rmr}$.
We here introduce their permuted version, standard permuted Young composition tableaux, which turn out to be very suitable to describe the $H_n(0)$-module $\phi[\bfSsa]$.

\begin{definition}\label{def: SPYCT}
Given $\alpha \models n$ and $\sigma \in \SG_{\ell(\alpha)}$, a \emph{standard permuted Young composition tableau} $\sytab$ ($\SPYCT$) of shape $\alpha$ and type $\sigma$ is a filling of $\trcd(\alpha)$ with entries in $\{1,2,\ldots,n\}$ such that the following conditions hold:
\begin{enumerate}[label = {\rm (\arabic*)}]
    \item The entries are all distinct.
    \item The standardization of the word obtained by reading the first column from bottom to top is $\sigma$.
    \item The entries in each row are increasing from left to right.
   \item If $i<j$ and $\sytab_{i,k} < \sytab_{j,k+1}$, then $(i,k+1) \in \trcd(\alpha)$ and $\sytab_{i,k+1} < \sytab_{j,k+1}$.
\end{enumerate}
The condition (4) is called the \emph{Young triple condition}.
\end{definition}

In this subsection, let $\alpha \models n$ and $\sigma \in \SG_{\ell(\alpha)}$. 
We denote by $\SPYCT^\sigma(\alpha)$ the set of all SPYCTs of shape $\alpha$ and type $\sigma$. 
Let $\sytab \in \SPYCT^\sigma(\alpha)$. An integer $1\le i \le n-1$ is a \emph{descent} of $\sytab$ if $i+1$ lies weakly left of $i$ in $\sytab$. 
Denote by $\Des(\sytab)$ the set of all descents of $\sytab$ and set $\comp(\sytab) := \comp(\Des(\sytab))$.
For $1\le i < j \le n$, we say that $i$ and $j$ are \emph{attacking} (in $\sytab$) if either
\begin{enumerate}[label = {\rm (\roman*)}]
    \item $i$ and $j$ are in the same column, or
    \item $i$ and $j$ are in adjacent columns, with $j$ positioned upper-left of $i$.
\end{enumerate}
In case where $i$ and $i+1$ are attacking (resp. nonattacking) and $i$ is a descent of $\sytab$, we simply say that $i$ is an \emph{attacking descent} (resp. \emph{nonattacking descent}).

Let $\hbfS^\sigma_\alpha$ be the $\C$-span of $\SPYCT^\sigma(\alpha)$ and
$\sigma^{w_0}$ the conjugation of $\sigma$ by $w_0$, that is, $\sigma^{w_0} = w_0 \sigma w_0^{-1}$.
Define a $\C$-linear isomorphism $\upphi: \hbfS^\sigma_\alpha \ra \phi[\bfS^{\sigma^{w_0}}_{\alpha^\rmr}]$ by letting
\[
\upphi(\sytab)_{i,j} = n + 1 -\sytab_{i,j},
\]
then extending it by linearity.

Define an $H_n(0)$-action on $\hbfS^\sigma_\alpha$ by transporting that of $\phi[\bfS^{\sigma^{w_0}}_{\alpha^\rmr}]$
via $\upphi$, precisely,
\begin{align*}
h \cdot v = \upphi^{-1}\left( h \cdot \upphi(v) \right) \quad \text{for $h \in H_n(0)$ and $v \in \hbfS^\sigma_\alpha$}.
\end{align*}
It follows that 
\begin{align}\label{eq: isom SYCT and SPCT module}
\hbfS^\sigma_\alpha \cong \phi[\bfS^{\sigma^{w_0}}_{\alpha^\rmr}]
\end{align}
as an $H_n(0)$-module.
One can see that the generators $\pi_i$ act on $\SPYCT^\sigma(\alpha)$ in the following way:
\begin{align*}
\pi_i \cdot \sytab := \begin{cases}
\sytab & \text{if $i$ is not a descent},\\
0 & \text{if $i$ is an attacking descent},\\
s_i \cdot \sytab & \text{if $i$ is a nonattacking descent}
\end{cases}
\end{align*}
for $1\le i \le n-1$ and $\htau \in \SPYCT^\sigma(\alpha)$.

\begin{example}
The $H_5(0)$-actions on $\SPCT^{132}((2,2,1))$ and $\SPYCT^{213}((1,2,2))$ are illustrated as follows:
\[
\def \hp{0.25}
\def \vp{0.25}
\begin{tikzpicture}[baseline = 0mm]
\node[below] at (2.25,-4.5) {$\SPCT^{132}((2,2,1))$};

\node at (0,0+2*\vp) {
\begin{ytableau}
2 & 1 \\
5 & 4 \\
3
\end{ytableau}};
\node at (0.7-\hp,0.25+2*\vp) {} edge [out=40,in=320, loop] ();
\node at (1.4-\hp,0.7+2*\vp) {\small $\pi_1,\pi_4$};

\node at (1.5,-1.3+2*\vp) {$0$};
\draw [->] (0.5,-0.8+2*\vp) -- (1.25,-1.15+2*\vp);
\node at (1,-0.75+2*\vp) {\small $\pi_2$};

\draw [->] (0,-1.25+3*\vp) -- (0,-1.75+3*\vp);
\node[right] at (0.,-1.5+3*\vp) {\small $\pi_3$};

\node at (0,-3+4*\vp) {
\begin{ytableau}
2 & 1 \\
5 & 3 \\
4
\end{ytableau}};
\node at (0.7-\hp,-2.75+4*\vp) {} edge [out=40,in=320, loop] ();
\node at (1.4-2*\hp,-2.3+4*\vp) {\small $\pi_1,\pi_3$};

\draw [->] (0,-4.1+4*\vp) -- (0,-4.75+4*\vp);
\node at (0.9,-4.5+4*\vp) {\small $\pi_2,\pi_4$};
\node at (0,-5.1+4*\vp) {$0$};


\node at (4-0.5,-3+4*\vp+1.5) {
\begin{ytableau}
3 & 2 \\
5 & 1 \\
4
\end{ytableau}};
\node at (4.7-\hp-0.5,-2.75+4*\vp+1.5) {} edge [out=40,in=320, loop] ();
\node at (5.4-2*\hp-0.5,-2.3+4*\vp+1.5) {\small $\pi_2$};

\draw [->] (4-0.5,-4.1+4*\vp+1.5) -- (4-0.5,-4.75+4*\vp+1.5);
\node at (4.9-0.5,-4.5+4*\vp+1.5) {\small $\pi_1,\pi_3,\pi_4$};
\node at (4-0.5,-5.1+4*\vp+1.5) {$0$};
\end{tikzpicture}
\qquad \qquad 
\begin{tikzpicture}[baseline = 0mm]
\node[below] at (2.25,-4.5) {$\SPYCT^{213}((1,2,2))$};

\node at (0,0+2*\vp) {
\begin{ytableau}
4 & 5 \\
1 & 2 \\
3
\end{ytableau}};
\node at (0.7-\hp,0.25+2*\vp) {} edge [out=40,in=320, loop] ();
\node at (1.4-\hp,0.7+2*\vp) {\small $\pi_1,\pi_4$};

\node at (1.5,-1.3+2*\vp) {$0$};
\draw [->] (0.5,-0.8+2*\vp) -- (1.25,-1.15+2*\vp);
\node at (1,-0.75+2*\vp) {\small $\pi_3$};

\draw [->] (0,-1.25+3*\vp) -- (0,-1.75+3*\vp);
\node[right] at (0.,-1.5+3*\vp) {\small $\pi_2$};

\node at (0,-3+4*\vp) {
\begin{ytableau}
4 & 5 \\
1 & 3 \\
2
\end{ytableau}};
\node at (0.7-\hp,-2.75+4*\vp) {} edge [out=40,in=320, loop] ();
\node at (1.4-2*\hp,-2.3+4*\vp) {\small $\pi_2,\pi_4$};

\draw [->] (0,-4.1+4*\vp) -- (0,-4.75+4*\vp);
\node at (0.9,-4.5+4*\vp) {\small $\pi_1,\pi_3$};
\node at (0,-5.1+4*\vp) {$0$};

\node at (4-0.5,-3+4*\vp+1.5) {
\begin{ytableau}
3 & 4 \\
1 & 5 \\
2
\end{ytableau}};
\node at (4.7-\hp-0.5,-2.75+4*\vp+1.5) {} edge [out=40,in=320, loop] ();
\node at (5.4-2*\hp-0.5,-2.3+4*\vp+1.5) {\small $\pi_3$};

\draw [->] (4-0.5,-4.1+4*\vp+1.5) -- (4-0.5,-4.75+4*\vp+1.5);
\node at (4.9-0.5,-4.5+4*\vp+1.5) {\small $\pi_1,\pi_2,\pi_4$};
\node at (4-0.5,-5.1+4*\vp+1.5) {$0$};
\end{tikzpicture}
\]
\end{example}

For each $E \in \calE^{\sigma^{w_0}}(\alpha^\rmr)$, denote $\upphi^{-1}(\phi[\bfS^{\sigma^{w_0}}_{\alpha^\rmr,E}])$ by $\hbfS^\sigma_{\alpha,E}$. 
Then $\hbfS^\sigma_\alpha$ is decomposed into indecomposable modules as follows:
\begin{align*}
\hbfS^\sigma_\alpha 
= \upphi^{-1}(\phi[\bfS^{\sigma^{w_0}}_{\alpha^\rmr}]) 
\cong \bigoplus_{E \in \calE^{\sigma^{w_0}}(\alpha^\rmr)} \upphi^{-1}\left(\phi[\bfS^{\sigma}_{\alpha^\rmr,E}]\right) 
\cong \bigoplus_{E \in \calE^{\sigma^{w_0}}(\alpha^\rmr)} \hbfS^\sigma_{\alpha,E}
\end{align*}
For the canonical submodule $\bfS^{\sigma^{w_0}}_{\alpha^\rmr,C}$ of $\bfS^{\sigma^{w_0}}_{\alpha^\rmr}$, denote $\upphi^{-1}(\phi[\bfS^{\sigma^{w_0}}_{\alpha^\rmr,C}])$ by $\hbfS^\sigma_{\alpha,C}$.

When $\ell(\sigma s_{i}) < \ell(\sigma)$, the surjective homomorphism $\mPsi^{(i)}_C: \bfS^{\sigma}_{\alpha,C} \ra \bfS^{\sigma s_{i}}_{\alpha\cdot s_{i} ,C}$ in Theorem~\ref{thm: surj btw canonical} induces a surjective $H_n(0)$-module homomorphism from 
$\hmPsi^{(i)}_C: \phi[\bfS^{\sigma}_{\alpha,C}] \ra \phi[\bfS^{\sigma s_{i}}_{\alpha \cdot s_{i} ,C}]$ sending $x$ to $\mPsi^{(i)}_C(x)$.
With this notation, we derive the following noteworthy series of surjections.

\begin{corollary}
\label{Coro:Seq_of_PVXS}
Let $\sigma \in \SG_{\ell(\alpha)}$ be a permutation satisfying $\lambda(\alpha)^\rmr = \alpha \cdot \sigma$. For any reduced expression $s_{i_1}\cdots s_{i_k}$ of $\sigma$, we have the following series of surjections:
\[
\begin{tikzcd}
\bfP_{\alpha}  \arrow[r, two heads] &
\calV_{\alpha} \arrow[r, two heads] &
X_{\alpha} \arrow[r,  two heads] &
\hbfS^{\sigma}_{\ldalpha^\rmr,C} \arrow[r, two heads] &
\hbfS^{\sigma s_{i_{k}}}_{\ldalpha^\rmr \cdot s_{i_k},C} \arrow[r, two heads] &
\cdots \arrow[r, two heads] &
\hbfS^{\id}_{\alpha,C}
\end{tikzcd}
\]
In particular, if $\alpha$ is a partition, then we have
\[
\begin{tikzcd}
\bfP_{\alpha}  \arrow[r, two heads] &
\calV_{\alpha} \arrow[r, two heads] &
X_{\alpha} \cong
\hbfS^{w_0}_{\alpha^\rmr} \arrow[r, two heads] &
\hbfS^{w_0 s_{i_{k}}}_{\alpha^\rmr \cdot s_{i_k},C} \arrow[r, two heads] &
\cdots \arrow[r, two heads] &
\hbfS^{\id}_{\alpha,C}.
\end{tikzcd}
\]
\end{corollary}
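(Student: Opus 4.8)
The assertion will follow by splicing together four families of surjections that have all been established above; the only real work is to keep the combinatorial dictionary between $\sigma$ and $\sigma^{w_0}$, and between $\alpha$ and $\alpha^\rmr$, consistent. First, since $\bfP_\alpha\cong\bbfP_{\alpha^\rmc}$ as $H_n(0)$-modules, Theorem~\ref{thm: V hom ima of P} supplies a surjection $\bfP_\alpha\twoheadrightarrow\calV_\alpha$, and Theorem~\ref{thm: X hom ima of V} supplies $\calV_\alpha\twoheadrightarrow X_\alpha$. It remains to produce the arrow $X_\alpha\twoheadrightarrow\hbfS^\sigma_{\ldalpha^\rmr,C}$ and then the tail of the sequence.

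For the third arrow, the plan is to rewrite the hypothesis in a form fitting Theorem~\ref{thm: X isom to bfS}. Using $\gamma^\rmr=\gamma\cdot w_0$ and $w_0 s_i w_0=s_{\ell(\alpha)-i}$ in $\SG_{\ell(\alpha)}$, the identity $\lambda(\alpha)^\rmr=\alpha\cdot\sigma$ is equivalent to
\[
\lambda(\alpha)=(\alpha\cdot\sigma)\cdot w_0=(\alpha\cdot w_0)\cdot(w_0\sigma w_0)=\alpha^\rmr\cdot\sigma^{w_0},
\]
which is precisely the hypothesis of Theorem~\ref{thm: X isom to bfS} for the permutation $\sigma^{w_0}\in\SG_{\ell(\alpha)}$. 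That theorem then gives a surjective $H_n(0)$-module homomorphism $X_\alpha\twoheadrightarrow\mphi[\bfS^{\sigma^{w_0}}_{\ldalpha,C}]$. Composing it with the inverse of the isomorphism $\upphi$ of~\eqref{eq: isom SYCT and SPCT module}, restricted to the canonical pieces, and recalling that by definition $\hbfS^\sigma_{\ldalpha^\rmr,C}=\upphi^{-1}\!\bigl(\mphi[\bfS^{\sigma^{w_0}}_{(\ldalpha^\rmr)^\rmr,C}]\bigr)=\upphi^{-1}\!\bigl(\mphi[\bfS^{\sigma^{w_0}}_{\ldalpha,C}]\bigr)$, we obtain the desired surjection $X_\alpha\twoheadrightarrow\hbfS^\sigma_{\ldalpha^\rmr,C}$.

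For the tail, I would invoke Corollary~\ref{cor: seq of surj} with the partition $\ldalpha$ in place of $\alpha$ and the permutation $\sigma^{w_0}$: here $\ldalpha$ is compatible with every element of $\SG_{\ell(\alpha)}$, so Proposition~\ref{Prop: comparibility} raises no issue, and $s_{\ell(\alpha)-i_1}\cdots s_{\ell(\alpha)-i_k}$ is a reduced word for $\sigma^{w_0}$ since conjugation by $w_0$ is length-preserving. This yields a chain of surjections of $\bfS$-modules from $\bfS^{\sigma^{w_0}}_{\ldalpha,C}$ down to $\bfS^{\id}_{\ldalpha\cdot(\sigma^{w_0})^{-1},C}$. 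Applying the twisting functor $\mphi[-]$ — an autoequivalence of the category of $H_n(0)$-modules, hence carrying surjections to surjections — and then transporting every term along $\upphi^{-1}$, one checks term by term, via the identities $(\mu\cdot u)^\rmr=\mu^\rmr\cdot u^{w_0}$, $(\nu u)^{w_0}=\nu^{w_0}u^{w_0}$ and $\hbfS^\tau_{\gamma,C}=\upphi^{-1}\bigl(\mphi[\bfS^{\tau^{w_0}}_{\gamma^\rmr,C}]\bigr)$, that the $j$-th term becomes $\hbfS^{\sigma s_{i_k}\cdots s_{i_{k-j+1}}}_{\ldalpha^\rmr\cdot s_{i_k}\cdots s_{i_{k-j+1}},C}$ and the last term becomes $\hbfS^{\id}_{\alpha,C}$, because $\ldalpha\cdot(\sigma^{w_0})^{-1}=\alpha^\rmr$ (equivalently $\alpha=\ldalpha^\rmr\cdot\sigma^{-1}$, which is the hypothesis once more). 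Concatenating the four segments gives the stated series. When $\alpha$ is a partition one may take $\sigma=w_0$, since $\lambda(\alpha)^\rmr=\alpha^\rmr=\alpha\cdot w_0$; then $\ldalpha^\rmr=\alpha^\rmr$, Corollary~\ref{cor: X and S isomorphic} gives $X_\alpha\cong\mphi[\bfS^{w_0}_\alpha]$ with $\upphi^{-1}(\mphi[\bfS^{w_0}_\alpha])=\hbfS^{w_0}_{\alpha^\rmr}$, and $\bfS^{w_0}_\alpha$ is indecomposable — its $\mphi$-twist being $X_\alpha$, which is indecomposable — so it consists of the single class $C$, whence $\hbfS^{w_0}_{\alpha^\rmr}=\hbfS^{w_0}_{\alpha^\rmr,C}$ and the third arrow becomes an isomorphism.

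The constituent maps are all already in hand, so I do not expect a genuinely hard step; the delicate point will be the bookkeeping in the two middle paragraphs — first recognizing $\lambda(\alpha)^\rmr=\alpha\cdot\sigma$ as the hypothesis of Theorem~\ref{thm: X isom to bfS} for $\sigma^{w_0}$, and then verifying, term by term, that the image under $\upphi^{-1}\circ\mphi[-]$ of the $\bfS$-chain built from a reduced word of $\sigma^{w_0}$ is exactly the $\hbfS$-chain indexed by the given reduced word of $\sigma$, with the correct types and shapes.
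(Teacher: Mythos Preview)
Your proposal is correct and follows essentially the same approach as the paper: use Theorems~\ref{thm: V hom ima of P} and~\ref{thm: X hom ima of V} for the first two arrows, combine Theorem~\ref{thm: X isom to bfS} with the isomorphism~\eqref{eq: isom SYCT and SPCT module} for the third, and transport the chain of Theorem~\ref{thm: surj btw canonical}/Corollary~\ref{cor: seq of surj} through $\upphi^{-1}\circ\mphi[-]$ for the tail. Your treatment is in fact more careful than the paper's terse proof: you explicitly verify that $\lambda(\alpha)^\rmr=\alpha\cdot\sigma$ is equivalent to $\lambda(\alpha)=\alpha^\rmr\cdot\sigma^{w_0}$, track the reduced word $s_{\ell(\alpha)-i_1}\cdots s_{\ell(\alpha)-i_k}$ for $\sigma^{w_0}$, and check term by term that the transported chain has the correct shapes and types; the paper leaves this reindexing implicit. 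Your justification in the partition case that $\hbfS^{w_0}_{\alpha^\rmr}=\hbfS^{w_0}_{\alpha^\rmr,C}$ via the indecomposability of $X_\alpha$ is also a nice touch not spelled out in the paper.
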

\begin{proof}
Combining Theorem~\ref{thm: X isom to bfS} with~\eqref{eq: isom SYCT and SPCT module} yields that 
$\upphi^{-1} \circ \Ups : X_\alpha \rightarrow \hbfS^{\sigma}_{\ldalpha^\rmr,C}$ is a surjective $H_n(0)$-module homomorphism.
And, from Theorem~\ref{thm: surj btw canonical} it follows that 
$\upphi^{-1} \circ \hmPsi^{(i)}_C \circ \upphi: \hbfS^{\sigma}_{\alpha,C} \ra \hbfS^{\sigma s_i}_{\alpha \cdot s_i,C}$ is a surjective $H_n(0)$-module homomorphism.
Now, the desired result is obtained by~\eqref{eq: surj series 1}. 
\end{proof}

\begin{remark}
Let $\alpha \models n$. 
Choose a permutation $\sigma \in \SG_{\ell(\alpha)}$ satisfying that $\lambda(\alpha)^\rmr = \alpha \cdot \sigma$ and $\lambda(\alpha)^\rmr = \alpha \cdot (\sigma s_i)$ for some $1 \le i \le \ell(\alpha) - 1$.
From the definitions of the maps $\Ups$, $\upphi$, and $\hmPsi^{(i)}_C$ we have the following commuting diagram:
\[
\begin{tikzcd}
X_\alpha \arrow[rr, "\Ups"] \arrow[rrd, "\Ups"'] &  & {\phi[\bfS^{\sigma^{w_0}}_{\ldalpha,C}]} \arrow[rr, "\upphi^{-1}"] \arrow[d, " \hmPsi^{(i)}_C"]   &  & {\hbfS^{\sigma}_{\ldalpha^\rmr,C}} \arrow[d, "\upphi^{-1} \circ \hmPsi^{(i)}_C \circ \upphi"] \\
&  & {\phi[\bfS^{(\sigma s_{i})^{w_0}}_{\ldalpha,C}]} \arrow[rr, "\upphi^{-1}"] &  & {\hbfS^{\sigma s_{i}}_{\ldalpha^\rmr, C}}                 
\end{tikzcd}
\]
\end{remark}

Let us consider the quasisymmetric characteristic image of $\hbfS^\sigma_{\alpha}$.
Recall that $\rho: \Qsym \ra \Qsym$ is the involution defined by $\rho(F_\alpha) = F_{\alpha^\rmr}$
and $\phi: H_n(0) \ra H_n(0)$ is the involution satisfying that 
$\phi [\bfF_\alpha] \cong \bfF_{\alpha^{\rmr}}$.
This implies that for any finite dimensional $H_n(0)$-module $M$,
we have
\begin{align}\label{eq : mphi and rho}
\ch([\mphi[M]]) = \rho (\ch[M]).
\end{align}

In \cite{13LMvW}, the {\em Young quasisymmetric Schur function} $\hcalS_\alpha$ is defined by 
$\rho(\calS_{\alpha^\rmr})$, where $\calS_{\alpha^\rmr}$ is the quasisymmetric Schur function.
Combining~\eqref{eq: isom SYCT and SPCT module} with~\eqref{eq : mphi and rho},
we derive that
$$
\ch([\hbfSsa]) = \rho (\ch([\bfS^{\sigma^{w_0}}_{\alpha^\rmr}])).
$$
In particular, 
\[
\ch([\hbfS^\id_\alpha]) = \hcalS_\alpha. 
\]
We obtain a recursive relation for $\ch([\hbfSsa])$'s
from~\cite[Theorem 4.7]{20CKNO}.

\begin{theorem}
Let $\alpha \models n$ and $\sigma \in \SG_{\ell(\alpha)}$.
For $1 \le i \le \ell(\alpha)-1$ such that $\ell(\sigma s_i) < \ell(\sigma)$, 
we have
\begin{align*}
\ch([\hbfSsa]) = \sum_{\alpha^\rmr = \beta^\rmr \bubact \pi_{n-i}} \ch([\hbfS^{ \sigma s_{i}}_\beta]).
\end{align*}
\end{theorem}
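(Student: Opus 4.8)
The plan is to transport the recursion of~\cite[Theorem~4.7]{20CKNO} for the modules $\bfSsa$ --- that is, $\ch([\bfSsa])=\sum_{\alpha=\beta\bubact\pi_j}\ch([\bfS^{\sigma s_j}_\beta])$ whenever $\ell(\sigma s_j)<\ell(\sigma)$ --- through the isomorphism $\hbfSsa\cong\mphi[\bfS^{\sigma^{w_0}}_{\alpha^\rmr}]$ of~\eqref{eq: isom SYCT and SPCT module} and the character identity $\ch([\mphi[M]])=\rho(\ch([M]))$ of~\eqref{eq : mphi and rho}. We may assume $\SPYCT^\sigma(\alpha)\ne\emptyset$, equivalently that $\alpha^\rmr$ is compatible with $\sigma^{w_0}$, since otherwise both sides vanish. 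First I would record the consequences
\[
\ch([\hbfSsa])=\rho\big(\ch([\bfS^{\sigma^{w_0}}_{\alpha^\rmr}])\big)
\qquad\text{and}\qquad
\ch([\hbfS^{\sigma s_i}_\beta])=\rho\big(\ch([\bfS^{(\sigma s_i)^{w_0}}_{\beta^\rmr}])\big),
\]
the second by applying~\eqref{eq: isom SYCT and SPCT module} and~\eqref{eq : mphi and rho} with $(\sigma s_i,\beta)$ in place of $(\sigma,\alpha)$; these hold for every composition $\beta$ of $n$ with $\ell(\beta)=\ell(\alpha)$.

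Next I would check that~\cite[Theorem~4.7]{20CKNO} may be invoked for $\bfS^{\sigma^{w_0}}_{\alpha^\rmr}$ at the appropriate simple reflection. The conjugation $w\mapsto w_0ww_0$ is the length-preserving automorphism of $\SG_{\ell(\alpha)}$ induced by the flip $s_k\mapsto s_{\ell(\alpha)-k}$; hence $\ell(\sigma^{w_0})=\ell(\sigma)$ and $\sigma^{w_0}s_{\ell(\alpha)-i}=(\sigma s_i)^{w_0}$, so the hypothesis $\ell(\sigma s_i)<\ell(\sigma)$ becomes $\ell(\sigma^{w_0}s_{\ell(\alpha)-i})<\ell(\sigma^{w_0})$. (The reflected index $\ell(\alpha)-i$ is the index denoted $n-i$ in the statement.) Applying~\cite[Theorem~4.7]{20CKNO} with shape $\alpha^\rmr$, type $\sigma^{w_0}$ and simple transposition $s_{\ell(\alpha)-i}$ then gives
\[
\ch([\bfS^{\sigma^{w_0}}_{\alpha^\rmr}])=\sum_{\alpha^\rmr=\delta\bubact\pi_{\ell(\alpha)-i}}\ch([\bfS^{(\sigma s_i)^{w_0}}_{\delta}]).
\]

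Finally I would apply $\rho$ to both sides, distribute it over the finite sum, and rewrite each summand using the first step: for $\delta$ with $\ell(\delta)=\ell(\alpha)$ one has $\rho(\ch([\bfS^{(\sigma s_i)^{w_0}}_{\delta}]))=\ch([\hbfS^{\sigma s_i}_{\delta^\rmr}])$. Setting $\beta:=\delta^\rmr$ --- a bijection on compositions of $n$ of length $\ell(\alpha)$ under which the condition $\alpha^\rmr=\delta\bubact\pi_{\ell(\alpha)-i}$ becomes $\alpha^\rmr=\beta^\rmr\bubact\pi_{n-i}$ --- then yields $\ch([\hbfSsa])=\sum_{\alpha^\rmr=\beta^\rmr\bubact\pi_{n-i}}\ch([\hbfS^{\sigma s_i}_\beta])$, as claimed.

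I do not expect a deep obstacle: the whole argument is a transport of~\cite[Theorem~4.7]{20CKNO} across the two involutions $\rho$ on $\Qsym$ and $\mphi$ (equivalently, reversal) on modules and on compositions, and~\cite[Theorem~4.7]{20CKNO} carries all the substance. The one point requiring care is the index bookkeeping of the middle step --- confirming that $w_0$-conjugation on $\SG_{\ell(\alpha)}$ is length preserving and carries the descent of $\sigma$ at $i$ to the descent of $\sigma^{w_0}$ at $\ell(\alpha)-i$, so that $s_{\ell(\alpha)-i}$ is an admissible reflection in~\cite[Theorem~4.7]{20CKNO} --- together with checking that the reversal $\delta\leftrightarrow\beta^\rmr$ is exactly the substitution converting the $\bfS$-side summation range into the $\hbfS$-side range appearing in the statement.
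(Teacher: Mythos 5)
Your proof is correct and fills in exactly the argument the paper leaves implicit (the paper states the theorem immediately after the one-line remark ``We obtain a recursive relation for $\ch([\hbfSsa])$'s from~\cite[Theorem 4.7]{20CKNO}'' with no further proof), namely transporting that recursion through $\hbfSsa\cong\mphi[\bfS^{\sigma^{w_0}}_{\alpha^\rmr}]$ and $\ch\circ\mphi=\rho\circ\ch$ and then reindexing by $\beta=\delta^\rmr$. You are also right that the index in the bubble-sorting operator must be $\ell(\alpha)-i$ rather than $|\alpha|-i$ (the action $\bubact\pi_j$ is only defined for $1\le j\le\ell(\alpha)-1$), so the statement's ``$\pi_{n-i}$'' should be read as $\pi_{\ell(\alpha)-i}$; your proof handles this correctly.
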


Applying $\rho: \Qsym \ra \Qsym$ to~\cite[Corollary 4.8]{20CKNO}, 
we obtain the expansion of $\ch([\hbfSsa])$ in terms of Young quasisymmetric Schur functions.
\begin{corollary}\label{cor: qschur exp}
Let $\alpha$ be a composition. 
For any $\sigma \in \SG_{\ell(\alpha)}$, we have
\begin{align*}
\ch([\hbfSsa]) &= \sum_{\alpha^\rmr = \beta^\rmr \bubact \pi_{\sigma^{w_0}}} \hcalS_{\beta}.
\end{align*}
Furthermore, if $\alpha^\rmr$ is a partition and $w_0$ is the longest element in $\SG_{\ell(\alpha)}$, 
then
\begin{align*}
\ch([\hbfS_\alpha^{w_0}]) &= s_{\alpha^\rmr}, 
\end{align*}
where $s_{\alpha^\rmr}$ is the Schur function.
\end{corollary}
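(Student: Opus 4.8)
The plan is to deduce both assertions from \cite[Corollary 4.8]{20CKNO} by transporting that result through the isomorphism~\eqref{eq: isom SYCT and SPCT module}. Recall that \cite[Corollary 4.8]{20CKNO} gives, for any composition $\delta$ and any $\tau \in \SG_{\ell(\delta)}$, the quasi-Schur expansion
\[
\ch([\bfS^\tau_\delta]) = \sum_{\delta = \epsilon \bubact \pi_\tau} \calS_\epsilon,
\]
and, in the special case where $\delta$ is a partition and $\tau = w_0$, that the right-hand side collapses to $s_\delta$. On the other hand, combining~\eqref{eq: isom SYCT and SPCT module} with~\eqref{eq : mphi and rho} already yields $\ch([\hbfSsa]) = \rho\big(\ch([\bfS^{\sigma^{w_0}}_{\alpha^\rmr}])\big)$. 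So everything reduces to applying the involution $\rho$, which is a ring isomorphism of $\Qsym$, term by term.

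For the first formula, I would substitute $\delta = \alpha^\rmr$ and $\tau = \sigma^{w_0}$ into the displayed identity above and then apply $\rho$:
\[
\ch([\hbfSsa]) = \rho\Big( \sum_{\alpha^\rmr = \epsilon \bubact \pi_{\sigma^{w_0}}} \calS_\epsilon \Big) = \sum_{\alpha^\rmr = \epsilon \bubact \pi_{\sigma^{w_0}}} \hcalS_{\epsilon^\rmr},
\]
where the last equality uses $\rho(\calS_\epsilon) = \rho(\calS_{(\epsilon^\rmr)^\rmr}) = \hcalS_{\epsilon^\rmr}$, i.e.\ just the definition of the Young quasisymmetric Schur function. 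Reindexing the sum via the involution $\epsilon \mapsto \beta := \epsilon^\rmr$ on compositions of $|\alpha|$ turns the condition $\alpha^\rmr = \epsilon \bubact \pi_{\sigma^{w_0}}$ into $\alpha^\rmr = \beta^\rmr \bubact \pi_{\sigma^{w_0}}$ and produces exactly the claimed expansion.

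For the ``furthermore'' part, assume $\alpha^\rmr$ is a partition, set $\mu := \alpha^\rmr$, and take $\sigma = w_0$; then $\sigma^{w_0} = w_0 w_0 w_0^{-1} = w_0$, so $\hbfS^{w_0}_\alpha \cong \phi[\bfS^{w_0}_{\mu}]$ by~\eqref{eq: isom SYCT and SPCT module}, whence
\[
\ch([\hbfS^{w_0}_\alpha]) = \rho\big(\ch([\bfS^{w_0}_{\mu}])\big) = \rho(s_\mu) = s_\mu = s_{\alpha^\rmr},
\]
the second equality being the partition case of \cite[Corollary 4.8]{20CKNO} and the third using that $\rho$ restricts to the identity on $\Sym$. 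Alternatively, one reads this off the first formula: for $\sigma = w_0$ the operator $\bubact \pi_{w_0}$ sorts a sequence into weakly decreasing order, so the sum ranges over all $\beta$ with $\lambda(\beta) = \mu$, and $\sum_{\lambda(\beta) = \mu} \hcalS_\beta = s_\mu$ by the Schur-refinement property of Young quasisymmetric Schur functions \cite{13LMvW}.

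The argument is entirely formal: $\rho$ is a linear ring isomorphism, so it distributes over the sum and sends $\calS_\epsilon$ to $\hcalS_{\epsilon^\rmr}$. The only point requiring care — and it is bookkeeping rather than a genuine obstacle — is tracking the reversal $\epsilon \leftrightarrow \epsilon^\rmr$ together with the conjugation $\sigma \leftrightarrow \sigma^{w_0}$, so that the summation condition and the subscript of $\hcalS$ come out precisely as in the statement, and making sure~\eqref{eq: isom SYCT and SPCT module} is invoked with shape $\alpha^\rmr$ and type $\sigma^{w_0}$ (not $\alpha$ and $\sigma$).
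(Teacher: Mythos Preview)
Your proposal is correct and follows essentially the same approach as the paper: the corollary is obtained by applying the involution $\rho$ to \cite[Corollary 4.8]{20CKNO}, using~\eqref{eq: isom SYCT and SPCT module} and~\eqref{eq : mphi and rho} to identify $\ch([\hbfSsa])$ with $\rho(\ch([\bfS^{\sigma^{w_0}}_{\alpha^\rmr}]))$, then reindexing via $\beta = \epsilon^\rmr$. Your treatment of the ``furthermore'' part, including the observation that $\rho$ fixes $s_\mu$, is exactly what the paper intends.
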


\section{The projective cover of $\bfS^{\sigma}_{\alpha,E}$ for an arbitrary class $E$}
\label{Sec: Projective cover of E}

In this section, we assume that $\alpha \models n$ is compatible with $\sigma \in \SG_{\ell(\alpha)}$. 
It was shown in~\cite{20CKNO} that 
$\bfP_{\alpha \cdot \sigma^{-1}}$ is the projective cover of $\bfS^\sigma_{\alpha,C}$
for the canonical class $C$ of $\mathcal{E}^\sigma(\alpha)$.
The purpose of this section is to find the projective cover of 
$\bfS^\sigma_{\alpha,E}$ for all classes $E \in \mathcal{E}^\sigma(\alpha)$
and 
to classify $\bfS^\sigma_{\alpha,E}$'s whose projective covers are indecomposable.
To begin with,
let us collect necessary  definitions and notation.
\begin{definition}
Suppose that 
$H$ and $H'$ are two disjoint connected horizontal strips in $\tcd(\alpha)$.
\begin{enumerate}[label = {\rm (\alph*)}]
\item 
We say that {\it $H$ and $H'$  share a column} (in $\tcd(\alpha)$) 
if there exists a column of $\tcd(\alpha)$ which intersects with both $H$ and $H'$.

\item 
We say that {\it $H$ and $H'$ are attacking} (in $\tcd(\alpha)$) if 
the rightmost box of $H$ and the leftmost box of $H'$
are in adjacent columns,
with the latter positioned lower-right of the former.
\end{enumerate}
\end{definition}

For example, let us consider the following composition diagram:
\begin{displaymath}
\begin{tikzpicture}
\def\hhh{4mm}
\def\vvv{5mm}
\def\www{0.20mm}
\draw[line width=\www, fill=red!20] (\hhh*0,\vvv*0) rectangle (\hhh*1,\vvv*1);
\draw[line width=\www, fill=yellow!20] (\hhh*1,\vvv*0) rectangle (\hhh*5,\vvv*1);
\draw[line width=\www, fill=purple!20] (\hhh*5,\vvv*0) rectangle (\hhh*8,\vvv*1);
\draw[line width=\www, fill=green!10] (\hhh*0,-\vvv*1) rectangle (\hhh*4,\vvv*0);
\draw[line width=\www, fill=magenta!30] (\hhh*0,-\vvv*2) rectangle (\hhh*2,-\vvv*1);
\draw[line width=\www, fill=blue!30] (\hhh*2,-\vvv*2) rectangle (\hhh*4,-\vvv*1);
\draw[line width=\www, fill=gray!30] (\hhh*4,-\vvv*2) rectangle (\hhh*5,-\vvv*1);
\node[] at (\hhh*0.5,\vvv*0.5) {\tiny $H_1$};
\node[] at (\hhh*6.5,\vvv*0.5) {\tiny $H_4$};
\node[] at (\hhh*2,-\vvv*0.5) {\tiny $H_2$};
\node[] at (\hhh*4.5,-\vvv*1.5) {\tiny $H_3$};
\end{tikzpicture}
\end{displaymath}
The horizontal strips $H_1, H_2$ share the first column,
but $H_2, H_3$ and $H_3,H_4$ do not share a column.
The rightmost box of $H_2$ and the leftmost box of $H_3$
are in adjacent columns and the latter is lower-right of the former,
so $H_2$ and $H_3$ are attacking.

Given an SPCT $\tau$ and an entry $i$ in $\tau$, we denote by ${\sf r}_\tau(i)$ and ${\sf c}_\tau(i)$ the row index and column index of $\tau$, respectively.
\smallskip

\noindent {\bf Convention.}
Since we are working inside $E \in \calE^\sigma(\alpha)$,
when $\tau$ is the source tableau $\tauE$ in $E$, we omit the subscript $\tauE$ from ${\sf r}_{\tauE}(i)$ and $\sfc_{\tauE}(i)$ for simplicity.
\smallskip

Set $\Des(\tauE) = \{d_1 <  d_2 <  \cdots < d_m \}$, $d_0:=0$, and $d_{m+1}: = n$.
Let $\bal^{(1)} = (1^{d_1})$.
For $1 \leq j \le m$, define 
\[
\bal^{(j+1)} = 
\begin{cases}
\bal^{(j)} \odot (1^{d_{j+1} - d_{j}}) & 
\begin{array}{l}
\text{if $d_{j-1} + 1$ is weakly right of $d_{j+1}$ or}\\ 
\text{$d_{j-1}+1$ and $d_{j+1}$ are attacking in $\tauE$,}
\end{array}\\
\bal^{(j)} \oplus (1^{d_{j+1} - d_{j}}) & 
\text{~otherwise,}
\end{cases} 
\]
where $\bal^{(j)} \odot (1^{d_{j+1} - d_{j}})$ is the near concatenation of $\bal^{(j)}$ and $(1^{d_{j+1} - d_{j}})$ (see Subsection~\ref{subsec: PIM}).
Denote $\bal_E := \bal^{(m+1)}$.

Let us depict $\bal_E$ in a more pictorial manner.
For $1 \le j \le m+1$, define $\tH_j$ to be  the horizontal strip consisting of boxes with entries from $d_{j-1} +1$ to $d_j$ in $\tauE$.
For each $2 \le j \le m+1$, the ribbon diagram $\trd(\bal^{(j)})$ can be constructed recursively in the following steps:
\begin{enumerate}[label = {\rm (\roman*)}] 
\item Rotate $\tH_j$ to be a vertical strip. 
\item Attach this vertical strip to the right $\trd(\bal^{(j-1)})$ so that column $j$ and column $j+1$ of the resulting generalized ribbon diagram are connected to each other if $\tH_j$ and $\tH_{j+1}$ 
share a column or are attacking in $\tcd(\alpha)$, disconnected otherwise.
\end{enumerate}

Let $\rtE \in \SRT(\bal_E)$ be SRT obtained by filling $\trd(\bal_E)$
with entries $1, 2, \ldots, n$ from top to bottom and from left to right. 
It should be noticed that this tableau is nothing but $T_0 \in \SRT(\bal_E)$ introduced in Subsection~\ref{subsec: PIM}.

We can construct $\rtE$ directly from the source tableau $\tauE$.
Given $\tau \in E$, let us use the notation $\tau(\tH_j)$ to denote the subfilling of $\tau$ occupied by $\tH_j$ in $\tcd(\alpha)$.
When $i$ is an entry of $\tau(\tH_j)$ in $\tau$, we simply write $i \in \tau(\tH_j)$.
With this notation, $\rtE$ can be obtained by placing $\tauE(\tH_j)$ vertically in the order $j = 1,2,\ldots,m+1$. 

For $T \in \SRT(\bal_E)$, let $\tau_T$ be the filling of $\tcd(\alpha)$ such that $\tH_j$ is filled with the entries of the $j$th column of $T$ in the decreasing order beginning from $j = 1$ and ending at $j = m+1$.
Define a $\mathbb{C}$-linear map $\eta: \bbfP_{\bal_E} \to \bfS^\sigma_{\alpha,E}$ by letting
\begin{align*}
 \quad T \mapsto 
\begin{cases}
\tau_T & \text{if it is contained in $E$,}\\
0 & \text{otherwise}
\end{cases}
\end{align*}
for $T \in \SRT(\bal_E)$ and extending it by linearity.
For the definition of $\bbfP_{\bal_E}$, see~\eqref{eq: def of bbfP action}.

\begin{example}\label{Two sources E and F}
Let
\begin{equation*}
\begin{array}{l}
\tauE =
\end{array}
\begin{array}{l}
\begin{ytableau}
{\color{red} 1}  \\
{\color{red} 7} & 6 & 5 & 4 \\
9 & 8 & {\color{red} 3} & 2 
\end{ytableau}
\end{array}
\begin{array}{l}
\quad
\text{and}
\quad
\tau =
\end{array}
\begin{array}{l}
\begin{ytableau}
4 \\
7 & 6 & 5 & 2 \\
9 & 8 & 3 & 1
\end{ytableau}
\end{array}.
\end{equation*}
In the left tableau, the entries in red denote descents.
Note that 
$d_0+1 = {\color{red} 1}$ and $d_2={\color{red} 3}$ are nonattacking,
$d_1+1=2$ is right of $d_3={\color{red} 7}$,
and $d_2+1=4$ is right of $d_4 = 9$.
So we have $\bal_E = (1) \oplus (1,2,1,1,2,1)$.
The following figures illustrate $\tH_j$, $\tauE(\tH_j)$, and $\tau(\tH_j)$ for $j = 1,2,3,4$.
\begin{displaymath}
\begin{tikzpicture}
\def\hhh{4mm}
\def\vvv{5mm}
\def\www{0.20mm}
\draw[line width=\www, fill=red!20] (\hhh*0,\vvv*0) rectangle (\hhh*1,\vvv*1);
\draw[line width=\www, fill=gray!20] (\hhh*0,-\vvv*1) rectangle (\hhh*4,\vvv*0);
\draw[line width=\www,dotted] (\hhh*1,-\vvv*1) -- (\hhh*1,\vvv*0);
\draw[line width=\www,dotted] (\hhh*2,-\vvv*1) -- (\hhh*2,\vvv*0);
\draw[line width=\www,dotted] (\hhh*3,-\vvv*1) -- (\hhh*3,\vvv*0);
\draw[line width=\www, fill=magenta!30] (\hhh*0,-\vvv*2) rectangle (\hhh*2,-\vvv*1);
\draw[line width=\www,dotted] (\hhh*1,-\vvv*2) -- (\hhh*1,-\vvv*1);
\draw[line width=\www, fill=blue!30] (\hhh*2,-\vvv*2) rectangle (\hhh*4,-\vvv*1);
\draw[line width=\www,dotted] (\hhh*3,-\vvv*2) -- (\hhh*3,-\vvv*1);
\node[] at (\hhh*0.6,\vvv*0.5) {\small $\tH_1$};
\node[] at (\hhh*2,-\vvv*0.5) {\small $\tH_3$};
\node[] at (\hhh*1,-\vvv*1.5) {\small $\tH_4$};
\node[] at (\hhh*3,-\vvv*1.5) {\small $\tH_2$};
\end{tikzpicture}
\hskip 10mm
\begin{tikzpicture}
\def\hhh{4mm}
\def\vvv{5mm}
\def\www{0.20mm}
\draw[line width=\www, fill=red!20] (\hhh*0,\vvv*0) rectangle (\hhh*1,\vvv*1);
\draw[line width=\www, fill=gray!20] (\hhh*0,-\vvv*1) rectangle (\hhh*4,\vvv*0);
\draw[line width=\www,dotted] (\hhh*1,-\vvv*1) -- (\hhh*1,\vvv*0);
\draw[line width=\www,dotted] (\hhh*2,-\vvv*1) -- (\hhh*2,\vvv*0);
\draw[line width=\www,dotted] (\hhh*3,-\vvv*1) -- (\hhh*3,\vvv*0);
\draw[line width=\www, fill=magenta!30] (\hhh*0,-\vvv*2) rectangle (\hhh*2,-\vvv*1);
\draw[line width=\www,dotted] (\hhh*1,-\vvv*2) -- (\hhh*1,-\vvv*1);
\draw[line width=\www, fill=blue!30] (\hhh*2,-\vvv*2) rectangle (\hhh*4,-\vvv*1);
\draw[line width=\www,dotted] (\hhh*3,-\vvv*2) -- (\hhh*3,-\vvv*1);
\node[left] at (-\hhh*0.3,-\vvv*0.5) {$\tauE=$};
\node[] at (\hhh*0.5,\vvv*0.5) {$1$};
\node[] at (\hhh*0.5,-\vvv*0.5) {$7$};
\node[] at (\hhh*1.5,-\vvv*0.5) {$6$};
\node[] at (\hhh*2.5,-\vvv*0.5) {$5$};
\node[] at (\hhh*3.5,-\vvv*0.5) {$4$};
\node[] at (\hhh*0.5,-\vvv*1.5) {$9$};
\node[] at (\hhh*1.5,-\vvv*1.5) {$8$};
\node[] at (\hhh*2.5,-\vvv*1.5) {$3$};
\node[] at (\hhh*3.5,-\vvv*1.5) {$2$};
\end{tikzpicture}
\hskip 10mm
\begin{tikzpicture}
\def\hhh{4mm}
\def\vvv{5mm}
\def\www{0.20mm}
\draw[line width=\www, fill=red!20] (\hhh*0,\vvv*0) rectangle (\hhh*1,\vvv*1);
\draw[line width=\www, fill=gray!20] (\hhh*0,-\vvv*1) rectangle (\hhh*4,\vvv*0);
\draw[line width=\www,dotted] (\hhh*1,-\vvv*1) -- (\hhh*1,\vvv*0);
\draw[line width=\www,dotted] (\hhh*2,-\vvv*1) -- (\hhh*2,\vvv*0);
\draw[line width=\www,dotted] (\hhh*3,-\vvv*1) -- (\hhh*3,\vvv*0);
\draw[line width=\www, fill=magenta!30] (\hhh*0,-\vvv*2) rectangle (\hhh*2,-\vvv*1);
\draw[line width=\www,dotted] (\hhh*1,-\vvv*2) -- (\hhh*1,-\vvv*1);
\draw[line width=\www, fill=blue!30] (\hhh*2,-\vvv*2) rectangle (\hhh*4,-\vvv*1);
\draw[line width=\www,dotted] (\hhh*3,-\vvv*2) -- (\hhh*3,-\vvv*1);
\node[left] at (-\hhh*0.3,-\vvv*0.5) {$\tau=$};
\node[] at (\hhh*0.5,\vvv*0.5) {$4$};
\node[] at (\hhh*0.5,-\vvv*0.5) {$7$};
\node[] at (\hhh*1.5,-\vvv*0.5) {$6$};
\node[] at (\hhh*2.5,-\vvv*0.5) {$5$};
\node[] at (\hhh*3.5,-\vvv*0.5) {$2$};
\node[] at (\hhh*0.5,-\vvv*1.5) {$9$};
\node[] at (\hhh*1.5,-\vvv*1.5) {$8$};
\node[] at (\hhh*2.5,-\vvv*1.5) {$3$};
\node[] at (\hhh*3.5,-\vvv*1.5) {$1$};
\end{tikzpicture}
\end{displaymath}
For instance, 
$\tauE(\tH_3) = 
\begin{tikzpicture}[baseline=-4mm]
\def\hhh{4mm}
\def\vvv{5mm}
\def\www{0.20mm}
\draw[line width=\www, fill=gray!20] (\hhh*0,-\vvv*1) rectangle (\hhh*4,\vvv*0);
\draw[line width=\www,dotted] (\hhh*1,-\vvv*1) -- (\hhh*1,\vvv*0);
\draw[line width=\www,dotted] (\hhh*2,-\vvv*1) -- (\hhh*2,\vvv*0);
\draw[line width=\www,dotted] (\hhh*3,-\vvv*1) -- (\hhh*3,\vvv*0);
\node[] at (\hhh*0.5,-\vvv*0.5) {$7$};
\node[] at (\hhh*1.5,-\vvv*0.5) {$6$};
\node[] at (\hhh*2.5,-\vvv*0.5) {$5$};
\node[] at (\hhh*3.5,-\vvv*0.5) {$4$};
\end{tikzpicture}$ and \
$\tau(\tH_3) = 
\begin{tikzpicture}[baseline=-4mm]
\def\hhh{4mm}
\def\vvv{5mm}
\def\www{0.20mm}
\draw[line width=\www, fill=gray!20] (\hhh*0,-\vvv*1) rectangle (\hhh*4,\vvv*0);
\draw[line width=\www,dotted] (\hhh*1,-\vvv*1) -- (\hhh*1,\vvv*0);
\draw[line width=\www,dotted] (\hhh*2,-\vvv*1) -- (\hhh*2,\vvv*0);
\draw[line width=\www,dotted] (\hhh*3,-\vvv*1) -- (\hhh*3,\vvv*0);
\node[] at (\hhh*0.5,-\vvv*0.5) {$7$};
\node[] at (\hhh*1.5,-\vvv*0.5) {$6$};
\node[] at (\hhh*2.5,-\vvv*0.5) {$5$};
\node[] at (\hhh*3.5,-\vvv*0.5) {$2$};
\end{tikzpicture}$.
Moreover, the ribbon diagram of $\bal_E$ is drawn as follows:
\begin{displaymath}
\begin{tikzpicture}
\def\hhh{4mm}
\def\vvv{5mm}
\def\www{0.20mm}
\draw[line width=\www,fill=red!20] (\hhh*0,\vvv*0) rectangle (\hhh*1,\vvv*1);
\draw[line width=\www,fill=blue!30] (\hhh*1,\vvv*1) rectangle (\hhh*2,\vvv*3);
\draw[line width=\www,dotted] (\hhh*1,\vvv*2) -- (\hhh*2,\vvv*2);
\draw[line width=\www,fill=gray!20] (\hhh*2,\vvv*2) rectangle (\hhh*3,\vvv*6);
\draw[line width=\www,dotted] (\hhh*2,\vvv*3) -- (\hhh*3,\vvv*3);
\draw[line width=\www,dotted] (\hhh*2,\vvv*4) -- (\hhh*3,\vvv*4);
\draw[line width=\www,dotted] (\hhh*2,\vvv*5) -- (\hhh*3,\vvv*5);
\draw[line width=\www,fill=magenta!30] (\hhh*3,\vvv*5) rectangle (\hhh*4,\vvv*7);
\draw[line width=\www,dotted] (\hhh*3,\vvv*6) -- (\hhh*4,\vvv*6);
\node[] at (\hhh*0.5,\vvv*0.5) {\small $\tH_1$};
\node[] at (\hhh*1.5,\vvv*2) {\small $\tH_2$};
\node[] at (\hhh*2.5,\vvv*4) {\small $\tH_3$};
\node[] at (\hhh*3.5,\vvv*6) {\small $\tH_4$};
\end{tikzpicture}
\end{displaymath}
Consider the following SRTs of shape $\bal_E$: 
\begin{displaymath}
\begin{array}{l}
T_0 = 
\end{array}
\begin{array}{l}
\begin{ytableau}
\none & \none & \none & 8 \\
\none & \none & 4 & 9 \\
\none & \none & 5 \\
\none & \none & 6 \\
\none & 2 & 7 \\
\none & 3  \\
1
\end{ytableau}
\end{array}
\begin{array}{l}
\hskip 1mm
T_1 =
\end{array}
\begin{array}{l}
\begin{ytableau}
\none & \none & \none & 8 \\
\none & \none & 2 & 9 \\
\none & \none & 5 \\
\none & \none & 6 \\
\none & 1 & 7 \\
\none & 3  \\
4
\end{ytableau}
\end{array}
\begin{array}{l}
\hskip 1mm
T_2 =
\end{array}
\begin{array}{l}
\begin{ytableau}
\none & \none & \none & 7 \\
\none & \none & 2 & 9 \\
\none & \none & 4 \\
\none & \none & 6 \\
\none & 1 & 8 \\
\none & 3  \\
5
\end{ytableau}
\end{array}
\begin{array}{l}
\hskip 1mm
T_3 =
\end{array}
\begin{array}{l}
\begin{ytableau}
\none & \none & \none & 8 \\
\none & \none & 2 & 9 \\
\none & \none & 3 \\
\none & \none & 5 \\
\none & 4 & 7 \\
\none & 6  \\
1
\end{ytableau}
\end{array}
\end{displaymath}
Observe that $\tau_{T_0} = \tauE$, $\tau_{T_1} = \tau$, 
\begin{displaymath}
\tau_{T_2} =
\begin{array}{l}
\begin{ytableau}
5 \\
*(black!10) 8 & *(black!10) 6 & 4 & 2 \\
9 & *(black!10) 7 & 3 & 1 
\end{ytableau}
\end{array}
\quad
\text{and}
\quad
\tau_{T_3} =
\begin{array}{l}
\begin{ytableau}
1 \\
7 & 5 & 3 & 2 \\
9 & 8 & 6 & 4 
\end{ytableau}
\end{array}.
\hspace*{10mm}
\end{displaymath}
Due to $\tauE, \tau \in E$, we have that $\eta(T_0) = \tauE$ and $\eta(T_1) = \tau$.
On the contrary, we see that $\eta(T_2)=\eta(T_3)=0$ since 
$\tau_{T_2}$ is not an SPCT and $\tau_{T_2}$ is an SPCT not in $E$.
\end{example}
\vskip 3mm

With the above notation and hypothesis, we can state the following theorem.

\begin{theorem}\label{Thm:Surjective homo}
The map $\eta: \overline{\bfP}_{\bal_E} \to \bfS^\sigma_{\alpha,E}$
is a surjective $H_n(0)$-module homomorphism.
\end{theorem}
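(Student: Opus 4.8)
The plan is to establish surjectivity quickly and then concentrate on the module-homomorphism property. For surjectivity, note that $\bbfP_{\bal_E}$ is cyclically generated by $T_0$ while, by Theorem~\ref{lem: unique source and sink}, $\bfS^\sigma_{\alpha,E}$ is cyclically generated by the source tableau $\tauE$. From the recursive construction of $\bal_E$ together with the way $\rtE = T_0$ and $\tauE$ are built, one verifies that $\tau_{T_0} = \tauE$, i.e. $\eta(T_0) = \tauE$; so once $\eta$ is known to be an $H_n(0)$-map, its image is a submodule of $\bfS^\sigma_{\alpha,E}$ containing the generator $\tauE$, hence is everything. (Alternatively, for each $\tau \in E$ one can write down a preimage by sorting $\tau(\tH_1),\dots,\tau(\tH_{m+1})$ increasingly into the columns of $\trd(\bal_E)$ and checking, from the row and column inequalities of SPCTs in $E$ and the way consecutive strips sit inside $\tcd(\alpha)$, that the result is a genuine element of $\SRT(\bal_E)$.)

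The bulk of the proof is the identity $\eta(\pi_i \star T) = \pi_i \cdot \eta(T)$ for all $T \in \SRT(\bal_E)$ and $1 \le i \le n-1$, with $\star$ as in~\eqref{eq: def of bbfP action}. The mechanism is the dictionary between $T$ and $\tau_T$: the entries of $\tau_T$ lying in strip $\tH_j$, read from right to left, are exactly the $j$th column of $T$ read from top to bottom, and columns $j, j+1$ of $\trd(\bal_E)$ sit adjacent in a common row precisely when $\tH_j$ and $\tH_{j+1}$ share a column or are attacking in $\tcd(\alpha)$. Using this one checks, when $\eta(T) = \tau_T \in E$: (i) if $i$ and $i+1$ lie in one strip of $\tau_T$, or more generally if $i$ is strictly above $i+1$ in $T$, then $i \notin \Des(\tau_T)$ and both sides equal $\tau_T$; (ii) if $i$ and $i+1$ occupy the two cells of a length-$2$ row of $T$ (the overlap of columns $j$ and $j+1$), then in $\tau_T$ they land at the right end of $\tH_j$ and the left end of $\tH_{j+1}$, which forces $i$ to be an attacking descent, so both sides are $0$; (iii) if $i$ is strictly below $i+1$ in $T$, then $\pi_i \star T = s_i \cdot T$, and re-sorting columns shows $\tau_{s_i \cdot T} = s_i \cdot \tau_T$ while $i \in \Des(\tau_T)$, so that $i$ nonattacking gives $\pi_i \cdot \tau_T = s_i \cdot \tau_T = \tau_{s_i \cdot T} \in E$ and $i$ attacking gives $\pi_i \cdot \tau_T = 0$ with $\tau_{s_i \cdot T} = s_i \cdot \tau_T$ failing the triple condition.

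The remaining part, and what I expect to be the main obstacle, is the tableaux $T$ with $\eta(T) = 0$, i.e. $\tau_T \notin E$: here $\pi_i \star T \in \{T, 0, s_i \cdot T\}$, and when $\pi_i \star T = s_i \cdot T$ one must show $\tau_{s_i \cdot T} \notin E$ as well, so that $\eta(\pi_i \star T) = 0 = \pi_i \cdot \eta(T)$. In other words, one must show that the obstruction to $\tau_T$ lying in $E$, namely a violation of the triple condition of Definition~\ref{def: PCT} or a standardized column word different from $\rmst(\tauE)$, survives the single transposition of $i$ and $i+1$. This is exactly where the recursive rule defining $\bal^{(j)}$ is used: whether $\tH_j$ and $\tH_{j+1}$ were joined by $\odot$ or by $\oplus$ records whether they share a column of $\tcd(\alpha)$ or are attacking there, hence which inter-strip triple-condition failures are relevant. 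The auxiliary facts used along the way, that $\eta$ is well defined, that rotating a strip into a column is order-reversing because SPCT rows strictly decrease, and that $s_i \cdot \tau_T$ stays in the class $E$ when $i$ is a nonattacking descent, are routine or follow from the known properties of standard permuted composition tableaux in~\cite{19TW, 20CKNO}.
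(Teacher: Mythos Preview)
Your overall architecture matches the paper's: establish surjectivity, then split the homomorphism check into the three cases according to the relative position of $i$ and $i+1$ in $T$. The place where your argument has a genuine gap is the step you treat as routine: in case~(i) you assert that if $i$ sits strictly above $i+1$ in $T$ (so $i \in \tau_T(\tH_j)$, $i+1 \in \tau_T(\tH_k)$ with $j \ge k$) then $i \notin \Des(\tau_T)$, and in case~(iii) you assert the converse. This biconditional, for $\tau_T \in E$, is exactly Lemma~\ref{lem: des to des}, and it is the technical heart of the proof. It does \emph{not} follow from the ``dictionary between $T$ and $\tau_T$'' alone: the strips $\tH_j$ and $\tH_k$ can occupy complicated relative positions in $\tcd(\alpha)$, and knowing only that $j<k$ gives no direct control over which column of $\tcd(\alpha)$ the particular entries $i$ and $i+1$ land in. The paper proves this by an induction on $\ell(\sigma)$ where $\tau = \pi_\sigma \cdot \tauE$, carefully tracking how a single $\pi_p$ can shuffle entries among strips; the argument is about a page long and uses \cite[Lemma~A.2]{20CKNO} to reduce length in one subcase.

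This same lemma is also what makes the $\eta(T)=0$ case go through cleanly. Rather than trying to show directly that every obstruction to $\tau_T \in E$ survives the swap $s_i$, the paper argues by contradiction: if $\eta(T)=0$ but $\eta(s_i \cdot T)=\tau_{s_i\cdot T}\in E$, then Lemma~\ref{lem: des to des} applied to $\tau_{s_i\cdot T}$ (with $i\in\tau_{s_i\cdot T}(\tH_k)$, $i+1\in\tau_{s_i\cdot T}(\tH_j)$, $k>j$) gives $i\notin\Des(\tau_{s_i\cdot T})$; one then checks $i$ and $i+1$ are not in the same row, so $s_i\cdot\tau_{s_i\cdot T}=\tau_T$ is again an SPCT in $E$, contradicting $\eta(T)=0$. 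Your sketch of ``the obstruction survives the transposition'' is plausible but would require separate bookkeeping for triple-condition failures versus standardized-column-word mismatches; the paper's route via Lemma~\ref{lem: des to des} avoids all of that.
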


We now collect lemmas which are required to prove Theorem~\ref{Thm:Surjective homo}.

\begin{lemma}\label{Lemma:Surjective}
The map 
$\eta : \overline{\bfP}_{\bal_E} \to \bfS^\sigma_{\alpha,E}$
is a surjective $\mathbb{C}$-linear map.
\end{lemma}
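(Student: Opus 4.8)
The plan is to prove surjectivity --- the $\C$-linearity of $\eta$ being immediate from its definition --- by producing, for each $\tau\in E$, a standard ribbon tableau $T_\tau\in\SRT(\bal_E)$ with $\eta(T_\tau)=\tau$; the construction of $T_\tau$ simply inverts the recipe defining $\tau_T$. Two structural facts get this off the ground: because $\tauE$ is a source tableau, every entry $i\in\{d_{j-1}+1,\ldots,d_j-1\}$ is a non-descent of $\tauE$, so $i+1$ sits immediately to the left of $i$; hence each horizontal strip $\tH_j$ is a contiguous run of boxes within a single row of $\tcd(\alpha)$, and $\tH_1,\ldots,\tH_{m+1}$ partition $\tcd(\alpha)$. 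Consequently, for any $\tau\in E$ the entries of $\tau$ occupying the boxes of $\tH_j$ --- call this list $\tau(\tH_j)$ --- are distinct and, lying along one row of the $\SPCT$ $\tau$, strictly decrease from left to right. I then define $T_\tau$ by filling the $j$-th column of $\trd(\bal_E)$, which has $|\tH_j|=d_j-d_{j-1}$ boxes, with the entries $\tau(\tH_j)$ in increasing order from top to bottom.

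The first thing to check is that $T_\tau$ is genuinely an element of $\SRT(\bal_E)$. Its columns increase downward by construction, so only the rows require attention, and the only rows in which two distinct columns of $\trd(\bal_E)$ both appear are the junction rows joining a column $j$ to a column $j+1$; by the definition of $\bal_E$ these occur precisely when $\tH_j$ and $\tH_{j+1}$ share a column, or are attacking, in $\tcd(\alpha)$. In such a junction row the box belonging to column $j$ is its topmost box, carrying $\min\tau(\tH_j)$, and the box belonging to column $j+1$ is its bottommost box, carrying $\max\tau(\tH_{j+1})$ (with the evident variant, reducing to two inequalities of the same form, when a size-one column makes a junction row meet three columns), so the inequality to establish is $\min\tau(\tH_j)<\max\tau(\tH_{j+1})$. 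This is the crux. When $\tH_j$ and $\tH_{j+1}$ share a column $c$, pick the box $B\in\tH_j$ and the box $B'\in\tH_{j+1}$ in column $c$: in $\tauE$ the entry at $B$ lies in $\{d_{j-1}+1,\ldots,d_j\}$ and the entry at $B'$ in $\{d_j+1,\ldots,d_{j+1}\}$, so $\tauE$ carries the smaller value at $B$; since every member of $E$ satisfies $\rmst(\tau)=\rmst(\tauE)$ and hence has the same standardized $c$-th column word as $\tauE$, the same comparison holds in $\tau$, and as the entry of $\tau$ at $B$ lies in $\tau(\tH_j)$ and that at $B'$ in $\tau(\tH_{j+1})$ this gives $\min\tau(\tH_j)<\max\tau(\tH_{j+1})$. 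When $\tH_j$ and $\tH_{j+1}$ are attacking, I would argue analogously, now invoking the triple condition for $\tau$ --- applied to the rightmost box of $\tH_j$, the leftmost box of $\tH_{j+1}$, and the box immediately above the latter --- together with the invariance of column standardizations on $E$, to locate an entry of $\tau(\tH_j)$ strictly below an entry of $\tau(\tH_{j+1})$.

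It remains to verify $\eta(T_\tau)=\tau$. By the definition of $\tau_T$, the filling $\tau_{T_\tau}$ refills $\tH_j$ with the $j$-th column of $T_\tau$ read in decreasing order, i.e. with $\tau(\tH_j)$ in decreasing order, which is exactly how those entries sit in $\tau$; since the $\tH_j$ partition $\tcd(\alpha)$ this forces $\tau_{T_\tau}=\tau$, and as $\tau\in E$ we conclude $\eta(T_\tau)=\tau_{T_\tau}=\tau$. Because $E$ is a basis of $\bfS^\sigma_{\alpha,E}$, surjectivity follows. The step I expect to be the genuine obstacle is the junction-row inequality in the second paragraph: identifying which box of each column falls in a junction row, handling the degenerate size-one-column configurations, and --- above all --- the attacking case, where the required comparison has to be extracted from the triple condition rather than read off a column standardization.
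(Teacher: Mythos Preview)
Your approach is essentially identical to the paper's: same construction of $T_\tau$, same reduction to the junction-row inequality $\min\tau(\tH_j)<\max\tau(\tH_{j+1})$, and the same two-case split into ``shared column'' versus ``attacking''. One small correction in the attacking case: the auxiliary box for the triple condition is not ``immediately above'' the leftmost box of $\tH_{j+1}$, but rather the box in the same column as the leftmost box of $\tH_{j+1}$ and the same row as the rightmost box of $\tH_j$ (equivalently, the box immediately to the right of the latter); with that box, the argument runs exactly as you outline---column-standardization invariance on $E$ forces its entry below $y$, and the triple condition (or its contrapositive, when the box is absent from $\tcd(\alpha)$) then yields $x<y$.
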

\begin{proof}
Recall that $\Des(\tauE) = \{ d_1 < d_2 < \cdots < d_m \}$ and $d_0=0$, $d_{m+1}=n$.
For $\tau \in E$,
we fill the $j$th column of $\trd(\bal_E)$ with the entries of $\tau(\tH_j)$ so that they are increasing from top to bottom
for all $1\leq j \leq m+1$.
Denote by $T_\tau$ the resulting filling.
Let us choose an arbitrary $1 \leq j \leq m$, which will be fixed.
Let 
\begin{align*}
x &= \text{the entry at the uppermost box in column $j$ of $T_\tau$,} \\
y &= \text{the entry at the lowermost box in column $j+1$ of $T_\tau$.}
\end{align*}
We claim that $T_\tau$ is an SRT.
To do this, it suffices to show that $x<y$ only in the case where column $j$ and column $j+1$ are connected.
We have two cases.

{\it Case 1: $\sfc(d_{j-1}+1) \geq \sfc(d_{j+1})$}.
Note that $x$ appears at $(\sfr(d_j),\sfc(d_{j-1})+1)$ and 
$y$ at $(\sfr(d_{j+1}),\sfc(d_{j+1}))$ in $\tau$.
Since $\tH_j$ and $\tH_{j+1}$ share a column,
more precisely, there exists $k$ such that 
$(\sfr(d_j),k) \in \tH_j$ and $(\sfr(d_{j+1}),k) \in \tH_{j+1}$,
our claim follows from the inequalities
$$y \geq \tau_{\sfr(d_{j+1}),k} > \tau_{\sfr(d_{j}),k} \geq x.$$

{\it Case 2: $\sfc(d_{j-1}+1)+1 = \sfc(d_{j+1})$ and $\sfr(d_j) < \sfr(d_{j+1})$}.
Note that $x$ appears at $(\sfr(d_j),\sfc(d_{j+1})-1)$ and 
$y$ at $(\sfr(d_{j+1}),\sfc(d_{j+1}))$ in $\tau$.
If $(\sfr(d_j),\sfc(d_{j+1})) \in \tcd(\alpha)$, then 
the entry at this box in $\tauE$ is less than the entry at $(\sfr(d_{j+1}),\sfc(d_{j+1}))$ in $\tauE$.
Since $\tauE, \tau$ are in the same class $E$,
the entry at $(\sfr(d_j),\sfc(d_{j+1}))$ in $\tau$ is less than the entry at $(\sfr(d_{j+1}),\sfc(d_{j+1}))$ in $\tau$.
From the triple condition it follows that $x < y$
even in the case where $(\sfr(d_j),\sfc(d_{j+1})) \notin \tcd(\alpha)$.

The above discussion shows that $T_\tau \in \SRT(\bal_E)$.
It is straightforward from the definition of $\eta : \overline{\bfP}_{\bal_E} \to \bfS^\sigma_{\alpha,E}$ that $\eta(T_\tau) = \tau$.
\end{proof}

The following lemma plays an essential role in proving that $\eta: \overline{\bfP}_{\bal_E} \to \bfS^\sigma_{\alpha,E}$ is an $H_n(0)$-module homomorphism.

\begin{lemma}\label{lem: des to des}
Given $\tau \in E$, 
suppose that $i$ appears in $\tau(\tH_j)$ and $i+1$ in $\tau(\tH_k)$.
Then $i$ is a descent of $\tau$ if and only if $j<k$.
\end{lemma}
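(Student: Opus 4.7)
The plan is to proceed by case analysis on the relation between $j$ and $k$, combined with an induction on the minimum number of nonattacking-descent swaps needed to reach $\tau$ from the source tableau $\tauE$ within the class $E$.

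First I handle the easy case $j=k$. Since $\tauE$ is a source tableau, the entries $d_{j-1}+1, d_{j-1}+2, \ldots, d_j$ all lie in a single row of $\tcd(\alpha)$, so $\tH_j$ is literally a row segment. Hence $i$ and $i+1$ both sit in the same row of $\tau$, and the SPCT row-decrease rule forces $i+1$ strictly to the left of $i$; thus $i$ is not a descent, and $j<k$ fails, so both sides of the biconditional are false.

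For the case $j \neq k$ I plan to induct on the distance from $\tauE$ to $\tau$ along edges given by $\tau \mapsto s_m \cdot \tau$ for nonattacking descents $m$ (such edges generate the class $E$). The base case $\tau = \tauE$ is immediate: the entries of $\tauE(\tH_j)$ form the block $\{d_{j-1}+1,\ldots,d_j\}$, so the only consecutive pair straddling two strips is $(d_j, d_j+1)$ with $d_j+1 \in \tauE(\tH_{j+1})$, and $d_j$ is a descent of $\tauE$ by definition. In the inductive step, pass from $\tau$ to $\tau' = s_{i_0}\cdot\tau$ for a nonattacking descent $i_0$ of $\tau$. For every $i$ with $\{i,i+1\}\cap\{i_0,i_0+1\}=\emptyset$ the positions and strip indices are unchanged, and the inductive hypothesis transfers verbatim. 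For $i = i_0$, the swap simultaneously reverses the column comparison between $i_0$ and $i_0+1$ and the strip-index comparison $\chi_\tau(i_0)$ vs $\chi_\tau(i_0+1)$, so both sides of the equivalence flip to ``not a descent''.

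The substantive obstacle lies in the boundary cases $i = i_0 - 1$ and $i = i_0 + 1$, where only one of the two entries moves. Here the inductive hypothesis directly controls the consecutive pairs $(i_0-1, i_0)$ and $(i_0, i_0+1)$ in $\tau$, but the descent status of $i_0-1$ in $\tau'$ depends on the column comparison between the positions of $i_0-1$ and $i_0+1$ in $\tau$, which is a non-consecutive comparison. To bridge this gap I will combine three ingredients: (i) the descent-and-nonattacking conditions on $i_0$ in $\tau$, which give either a strict column gap of at least $2$ or adjacency with $i_0+1$ in a row strictly above $i_0$; (ii) the triple condition for the SPCT $\tau$ applied to $\{i_0-1, i_0, i_0+1\}$ (respectively $\{i_0, i_0+1, i_0+2\}$), which rules out incompatible three-cell configurations; and (iii) the inductive hypothesis for $\tau$ applied to both overlapping pairs. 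Careful subcase analysis based on whether $\chi_\tau(i_0-1)$ is less than, equal to, or greater than $\chi_\tau(i_0)$ and $\chi_\tau(i_0+1)$ will show that the column comparison in $\tau'$ matches the strip-index comparison $\chi_{\tau'}(i_0-1)$ vs $\chi_{\tau'}(i_0) = \chi_\tau(i_0+1)$, completing the induction; the case $i = i_0+1$ is handled by the symmetric argument. Verifying these subcases consistently is the main technical hurdle in the proof.
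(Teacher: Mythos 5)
Your overall strategy --- induct on the number of nonattacking-descent swaps from $\tauE$ to $\tau$, which is the same as the paper's induction on $\ell(\sigma)$ for $\tau=\pi_\sigma\cdot\tauE$ --- matches the paper's, and your treatment of the easy cases ($j=k$, the base case, the pairs disjoint from $\{i_0,i_0+1\}$, and $i=i_0$) is fine. However, you explicitly defer the cases $i=i_0\pm 1$ to ``careful subcase analysis,'' and that is precisely where the entire content of the lemma lies; the proposal as written has a genuine gap there.

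Concretely, in the case $i=i_0-1$ you need to compare the column of $i_0-1$ with the column of $i_0+1$ in $\tau$, a \emph{nonconsecutive} pair, and the inductive hypothesis on $\tau$ only controls the two consecutive pairs $(i_0-1,i_0)$ and $(i_0,i_0+1)$. Your three ingredients (the nonattacking-descent geometry of $i_0$, the triple condition, and the IH on the overlapping consecutive pairs) do not obviously pin down this skip comparison: in the subcase where $i_0-1$ is not a descent of $\tau$ but the strip of $i_0-1$ is still below that of $i_0+1$, nothing you listed forces $\sfc_\tau(i_0+1)\ge\sfc_\tau(i_0-1)$. The paper resolves exactly this subcase by an extra move you do not have: when $p_l-1$ is not a descent of $\tau'$, it observes that $\tau'':=s_{p_l-1}\cdot\tau'$ is again an SPCT in $E$ (the triple condition rules out the forbidden adjacency), invokes \cite[Lemma A.2]{20CKNO} to write $\tau''=\pi_{\sigma''}\cdot\tauE$ with $\ell(\sigma'')=l-2$, and then applies the IH to the \emph{consecutive} pair $(p_l,p_l+1)$ in $\tau''$, whose columns coincide with the skip pair you need. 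Without this backward step (or some substitute), the induction does not close. Separately, note that the paper proves the ``if'' direction not by induction at all but by a short contradiction argument that applies the completed ``only if'' direction to $s_i\cdot\tau$; if you insist on carrying the full biconditional through the induction you must also verify the reverse implication in each of the boundary subcases, which you have not sketched.
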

\begin{proof}
We first prove the `only if' part.
Note that $\tau = \pi_\sigma \cdot \tauE$ for some $\sigma \in \SG_n$.

We use induction on $\ell(\sigma)$ to show the assertion.
In case where $\ell(\sigma) = 0$, we have $\tau= \tauE$.
By the definition of $\tauE$, 
if $i$ is a descent of $\tauE$, then $i+1$ appears in $\tauE(\tH_{j+1})$ as desired.
In case where $\ell(\sigma) = 1$, $\sigma = s_p$ for some $1 \leq p \leq n-1$.
Observe that $\tau$ and $\tauE$ are same except for $p$ and $p+1$, so we have only to check the cases where $i = p-1,p,p+1$. 
It is clear that $p$ is not a descent of $\tau$.
Suppose that $i = p-1$ is a descent of $\tau$.
Note that $p+1$ is in $\tauE(\tH_{k})$ since $p$ is in $\tau(\tH_{k})$.
Moreover, since $p$ is a descent in $\tauE$, it follows that $p$ is in $\tauE(\tH_{k-1})$.
Combining $p-1 \in \tauE(\tH_{j})$ with $p \in \tauE(\tH_{k-1})$, we deduce that $j \le k-1$, thus $j < k$.
We can deal with the case where $i = p+1$ in the same manner as above.

We now assume that the assertion is true when $\ell(\sigma)$ is less than $l$ ($l \ge 2$).
Let $\tau \in E$ with $\pi_{\sigma} \cdot \tauE$ with $\ell(\sigma) = l$
and $s_{p_l} s_{p_{l-1}} \cdots s_{p_1}$ a reduced expression for $\sigma$.
Let $\tau' = \pi_{\sigma'} \cdot \tauE$ with $\sigma' = s_{p_l} \sigma  =  s_{p_{l-1}} \cdots s_{p_1}$.
Considering that $\tau$ is identical to $\tau'$ except for $p_l, p_l+1$ and $p_l$ is not a descent of $\tau$,
we need to check the cases where $i = p_l-1, p_l+1$.
Since the latter case can be verified in the same manner as in the former case,
we only deal with the case where $i = p_l-1$ is a descent of $\tau$.

Let $p_l+1 \in \tau(\tH_q)$ for some $1 \le q \le m+1$.
Then 
\[
p_l-1  \in \tau'(\tH_j), \quad p_l  \in \tau'(\tH_q), \quad \text{and} \quad p_l+1  \in \tau'(\tH_k).
\]
Since $p_l$ is a descent of $\tau'$, the induction hypothesis implies that 
$q < k$. 
When $p_l-1$ is a descent of $\tau'$, the induction hypothesis again implies that $j < q$, thus $j < k$.
We next deal with the case where $p_l-1$ is not a descent of $\tau'$.
Note that 
$p_l-1$ and $p_l$ are not in the same column in $\tau'$.
Moreover, due to the triple condition, it does not occur that 
$p_l-1$ and $p_l$ are in adjacent columns in $\tau'$,
with $p_l-1$ positioned strictly lower-right of $p_l$.
Considering $\tau'':= s_{p_l-1} \cdot \tau'$,
one sees that $\tau'' \in E$ and $\tau'= \pi_{p_l-1} \cdot \tau''$. 
By virtue of~\cite[Lemma A.2]{20CKNO}, 
one can write $\tau''$ as $\pi_{\sigma''} \cdot \tauE$ for some $\sigma''\in \SG_n$ of length $l-2$. 
Then 
\[
p_l-1  \in \tau''(\tH_q),
\quad
p_l  \in \tau''(\tH_j),
\quad\text{and}\quad
p_l+1  \in \tau''(\tH_k).
\]
Note that 
$\sfc_{\tau''}(p_l) = \sfc_{\tau'}(p_l-1) = \sfc_{\tau}(p_l-1)$
and 
$\sfc_{\tau''}(p_l+1) = \sfc_{\tau'}(p_l+1) = \sfc_{\tau}(p_l)$.
From the assumption that $p_l-1$ is a descent of $\tau$
it follows that $\sfc_{\tau''}(p_l) \leq \sfc_{\tau''}(p_l+1)$,
which again implies that $p_l$ is a descent of $\tau''$.
Hence, the inequality $j<k$ is straightforward from the induction hypothesis.

Next, we prove the `if' part. 
Suppose that $j<k$, but $i$ is not a descent of $\tau$. 
If $i$ and $i+1$ are in the same row, then $i+1$ lies to the immediate left of $i$, which says that
$i$ is in the leftmost box of $\tH_{j}$ and $i+1$ in the rightmost box of $\tH_{k}$.
For each $1 \le t \le m$
the leftmost box of $\tH_t$ is located left of the rightmost box of $\tH_{t+1}$,
thus there exists $\tH_{t_0}$ with $j < t_0 < k$ 
which shares a column with $\tH_{j}$ and also a column with $\tH_{k}$.
Note that the largest entry in $\tH_{j}(\tauE)$ is less than the smallest entry in $\tH_{t_0}(\tauE)$ and 
the largest entry in $\tH_{t_0}(\tauE)$ is less than the smallest entry in $\tH_{k}(\tauE)$.
Since $\tau$ and $\tauE$ are in the same class $E$, there exists an entry $s$ in $\tH_{t_0}(\tau)$ such that $i < s < i+1$, which is absurd.
Thus, we can deduce that $i+1$ is strictly above $i$ or strictly below $i$.
By the triple condition, $i+1$ cannot appear strictly above and immediate left of $i$.
Therefore $\tau' := s_i \cdot \tau$ is an SPCT. It is clear that $i$ is a descent of $\tau'$, $i \in \tau'(\tH_k)$, and $i+1 \in \tau'(\tH_j)$. Thus by the ``only if'' part, we have $k < j$, which contradicts the assumption $j < k$.
Hence, we conclude that $i$ is a descent of $\tau$.
\end{proof}

\begin{proof}[Proof of Theorem~\ref{Thm:Surjective homo}]

The surjectivity of $\eta$ is verified in Lemma~\ref{Lemma:Surjective}.
For the completion of the proof, it suffices to show that $$
\eta(\pi_i \star T) = \pi_i \cdot \eta(T) 
$$ 
for $1 \leq i \leq n-1$ and $T \in \SRT(\bal_E)$.
Let $i \in \tau_T(\tH_j)$ and $i+1 \in \tau_T(\tH_k)$.

{\it Case 1: $\pi_i \star T = T$.}
In this case $i$ is strictly above $i+1$ in $T$, thus $j \geq k$.
When $\eta(T) = 0$, there is nothing to prove.
When $\eta(T) \neq 0$, the equality $\pi_i \cdot \eta(T) = \eta(T)$ follows from Lemma~\ref{lem: des to des}.

{\it Case 2: $\pi_i \star T = 0$.}
In this case $i$ and $i+1$ in the same row of $T$, thus $k = j+1$.
When $\eta(T) = 0$, there is nothing to prove.
Suppose that $\eta(T) \neq 0$.
Notice that $i$ appears at the rightmost box of $\tau_T(\tH_j)$, $i+1$ appears at the leftmost box of $\tau_T(\tH_{j+1})$, and column $j$ is connected to column $j+1$ in $T$.
If $\tH_{j}$ and $\tH_{j+1}$ share a column, then 
$i$ and $i+1$ are in the same column in $\tau_T$.
It means that $i$ is an attacking descent of $\tau_T$.
Otherwise, $\tH_{j}$ is strictly above $\tH_{j+1}$ 
and $i, i+1$ are in the adjacent columns in $\tau_T$.
It means that $i$ is an attacking descent of $\tau_T$.
Consequently $\pi_i \cdot \eta(T) = 0$.

{\it Case 3: $\pi_i \star T = s_i \cdot T$.}
In this case $i$ is strictly below $i+1$ in $T$, thus $j <k$.
This tells us that $i$ is strictly right of $i+1$ in $s_i \cdot T$.

We first deal with the case where $\eta(T) = 0$.
Suppose that $\eta(s_i \cdot T) \neq 0$.
Due to Lemma~\ref{lem: des to des}, we have that $i \not\in \Des(\tau_{s_i \cdot T})$.
If $i$ and $i+1$ are in the same row of $\tau_{s_i \cdot T}$, 
then it follows from the definition of $\eta$ that
$i$ lies weakly left of $i+1$ in $s_i \cdot T$, which is absurd.
This says that $i$ and $i+1$ are not in the same row of $\tau_{s_i \cdot T}$.
Therefore the filling $s_i \cdot \tau_{s_i \cdot T}$, which is obtained from $\tau_{s_i \cdot T}$ by swapping $i$ and $i+1$, still remains inside $E$.
This contradicts $\eta(T) = 0$ since $s_i \cdot \tau_{s_i \cdot T} = \tau_T$.
As a consequence, we can conclude that $\eta(s_i \cdot T) = 0$.

Next, we deal with the case where $\eta(T) \neq 0$.
Due to Lemma~\ref{lem: des to des}, we have that $i \in \Des(\tau_T)$.
When $i$ is a nonattacking descent, it holds that $\pi_i \cdot \eta(T) = s_i \cdot \tau_T = \tau_{s_i \cdot T} = \eta(s_i \cdot T)$.
When $i$ is an attacking descent, $\pi_i \cdot \eta(T) = 0$.
If $i$ and $i+1$ are in the same column of $\tau_T$,
then $\tau_{s_i\cdot T} \not\in E$ since $\tau_{s_i \cdot T} = s_i \cdot \tau_T$.
Otherwise, 
$\tau_{s_i\cdot T}$ is not an SPCT since the triple condition breaks at $i$, $i+1$ and the entry immediate right of $i+1$.
In both cases, we have $\eta(s_i\cdot T) = 0$.
\end{proof}

Theorem~\ref{Thm:Surjective homo} shows that $\bbfP_{\bal_E}$ is a candidate for the projective cover of $\bfS^\sigma_{\alpha,E}$.
To achieve the goal of this section, we have to verify that $\eta: \bbfP_{\bal_E} \to \bfS^\sigma_{\alpha,E}$ is an essential epimorphism.

\begin{lemma}{\rm (\cite[Proposition 3.6]{95ARS})}\label{lem: ess epi}
The following are equivalent for an epimorphism $f:A \ra B$, where $A$ and $B$ are finitely generated modules over a left artin ring.
\begin{enumerate}[label = {\rm (\alph*)}]
\item $f$ is an essential epimorphism.
\item $\ker(f) \subset \rad(A)$.
\end{enumerate}
\end{lemma}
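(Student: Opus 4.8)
The plan is to establish the two implications directly; the only substantive ingredient is the classical fact that the radical of a finitely generated module over a left artin ring is a superfluous (small) submodule, and I expect that to be the real crux, everything else being formal. So first I would record this fact: since $A$ is finitely generated over the left artin ring $R$, one has $\rad(A) = \rad(R)\cdot A$ and $\rad(R)$ is nilpotent, so Nakayama's lemma applies and yields that $A' + \rad(A) = A$ forces $A' = A$ for every submodule $A' \subseteq A$. Equivalently, $\rad(A)$ is small in $A$. The whole proof then amounts to deploying this observation in the right place.

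For the implication (b) $\Rightarrow$ (a), I would assume $\ker(f) \subseteq \rad(A)$ and take an arbitrary homomorphism $g\colon X \to A$ for which $f \circ g$ is an epimorphism. Setting $A' := g(X)$, one has $f(A') = (f\circ g)(X) = B$; since $f$ is surjective with $f(A') = B$, every $a \in A$ differs from some element of $A'$ by an element of $\ker(f)$, so $A = A' + \ker(f) \subseteq A' + \rad(A)$. By the preliminary observation $A' = A$, that is, $g$ is surjective. Hence $f$ is an essential epimorphism.

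For (a) $\Rightarrow$ (b) I would argue by contraposition: assume $\ker(f) \not\subseteq \rad(A)$. Since $\rad(A)$ is by definition the intersection of the maximal submodules of $A$, this intersection is proper, so there exists a maximal submodule $M \subsetneq A$ with $\ker(f) \not\subseteq M$; maximality of $M$ gives $M + \ker(f) = A$. Now the restriction $f|_M \colon M \to B$ satisfies $f(M) = f(M + \ker(f)) = f(A) = B$, so $f|_M$, equivalently the composite of the inclusion $\iota\colon M \hookrightarrow A$ with $f$, is an epimorphism, while $\iota$ itself is not surjective. Thus $\iota$ witnesses that $f$ is not an essential epimorphism, which proves the contrapositive of (a) $\Rightarrow$ (b) and completes the equivalence. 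The main obstacle, such as it is, lies entirely in invoking Nakayama's lemma correctly for (b) $\Rightarrow$ (a); the other direction needs no finiteness hypothesis at all.
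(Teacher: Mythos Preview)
Your proof is correct and complete. The paper does not actually prove this lemma at all: it is stated with a citation to \cite[Proposition 3.6]{95ARS} and used as a black box, so there is no argument in the paper to compare against. What you have written is the standard textbook proof of this fact, and both directions are handled cleanly; your remark that only the implication (b) $\Rightarrow$ (a) genuinely uses the artinian hypothesis (via Nakayama) is also accurate.
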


One can describe $\ker(\eta)$ explicitly.
Let $T \in \SRT(\bal_E)$.
Recall that $T^{q}_{p}$ denotes the entry at the $q$th box \emph{from the top} of the $p$th column from the left in $T$ (see Subsection~\ref{Sec3.1}).
Similarly, we denote by $T^{-q}_{p}$ the entry at the $q$th box \emph{from the bottom} of the $p$th column from the left in $T$.
Letting $c_p(T)$ be the size of the $p$th column of $T$,
it is clear that $T^{-q}_{p} = T^{c_p(T) - q +1}_{p}$.
Let $(x_{p}, y_{p,q})$ be the box in $\tau_T$ corresponding to $T^{-q}_p$ for $1 \leq q \leq c_p(T)$ via $\eta$.
From the definition of $\eta$ we have the following equality: 
\begin{align}\label{eq: x and y def}
y_{p,q} = q +  \sum_{\substack{p < p' \\  x_p = x_{p'}}} c_{p'}(T).
\end{align}
Define $\Theta_{\alpha,E}$ to be the subset of $\SRT(\bal_E)$ 
consisting of $T$'s such that there exists a quadruple $(i,j,s,t)$ satisfying
\begin{equation}\label{Eq: Description of kernel}
\left\{
\begin{aligned}
\text{\bf L1. } & 
i<j,~T_i^{-s} > T_j^{-t},~ \text{and}~y_{i,s}  = y_{j,t},~\text{or}\\
\text{\bf L2. } &
s=c_i(T),~
T_i^{-s} > T_j^{-t}, ~
x_i < x_j, \text{ and } \alpha_{x_i}
= y_{i,s} = y_{j,t} -1 ~\text{or}
\\
\text{\bf L3. } &
s=c_i(T),~
T_{i}^{-s} >  T_j^{-t} > T_{i^-}^{-1},~ x_i < x_j, \text{ and } \alpha_{x_i} > y_{i,s} = y_{j,t}-1, 
\text{ or }\\
\text{\bf L4. } &
s < c_i(T),~ 
T_i^{-s} > T_j^{-t} > T_i^{-(s+1)},~ x_i < x_j, \text{ and }  y_{i,s} = y_{j,t}-1.
\end{aligned}
\right.
\end{equation}
In {\bf L3}, the subscript $i^-$ denotes the column index $<i$ of $T$
uniquely determined by the conditions $x_{i} = x_{i^-}$ and $y_{i^-,1} =y_{i,s}+1$.

\begin{lemma}\label{lem: kernel of eta}
$\ker(\eta)$ is equal to the $\C$-span of $\Theta_{\alpha,E}$.
\end{lemma}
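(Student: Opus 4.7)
The plan is to establish both inclusions separately. The key observation is that $\eta(T) = 0$ means $\tau_T$ is either not an SPCT or lies outside the class $E$, and I will show that \textbf{L1} captures failures of column-standardization type (covering both the first-column condition (2) of an SPCT and class membership) while \textbf{L2}--\textbf{L4} capture failures of the triple condition~(4).

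For $\Span_\C(\Theta_{\alpha,E}) \subseteq \ker(\eta)$, fix $T \in \Theta_{\alpha,E}$ with witnessing quadruple $(i, j, s, t)$. Under \textbf{L1}, the entries $T_i^{-s}$ and $T_j^{-t}$ occupy positions $(x_i, k)$ and $(x_j, k)$ in $\tau_T$ with $k = y_{i,s} = y_{j,t}$; since $i < j$ forces all entries of $\tH_i$ in $\tauE$ to be strictly smaller than those of $\tH_j$, the inequality $T_i^{-s} > T_j^{-t}$ flips the standardization of the $k$-th column relative to $\tauE$, so $\tau_T$ is not in $E$ (and if $k = 1$, $\tau_T$ is not even an SPCT). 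Under \textbf{L2}, the box $(x_i, y_{i,s}+1)$ lies outside $\tcd(\alpha)$ since $\alpha_{x_i} = y_{i,s}$, so the triple condition fails at the pair $(x_i, y_{i,s})$, $(x_j, y_{j,t})$. Under \textbf{L3} (resp.\ \textbf{L4}), the box $(x_i, y_{i,s}+1)$ exists in $\tcd(\alpha)$ and carries the entry $T_{i^-}^{-1}$ (resp.\ $T_i^{-(s+1)}$); the inequality $T_j^{-t} > T_{i^-}^{-1}$ (resp.\ $T_j^{-t} > T_i^{-(s+1)}$) yields $\tau_{T, x_i, y_{i,s}+1} < \tau_{T, x_j, y_{j,t}}$, violating the second clause of the triple condition. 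In each case $\eta(T) = 0$.

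For the reverse inclusion, assume $T \in \SRT(\bal_E)$ with $\eta(T) = 0$. I would first show that the row-decreasing condition (3) of $\tau_T$ is always satisfied: within a single horizontal strip it is forced by placing the column of $T$ in decreasing order, and across the boundary between two strips $\tH_{p'}$ (left) and $\tH_p$ (right) sharing a row of $\tcd(\alpha)$ it follows from the SRT row-increasing condition on $T$ combined with the construction of $\bal_E$. Hence the failure $\eta(T) = 0$ must arise either from a triple-condition violation or from a column-standardization discrepancy with $\tauE$. In the former case, locate a violating pair $(x_i, y_{i,s})$, $(x_j, y_{j,t}) = (x_j, y_{i,s}+1)$ with $x_i < x_j$; the three possibilities for the content of $(x_i, y_{i,s}+1)$---namely, the box is absent, belongs to $\tH_i$ itself, or belongs to a neighbouring strip $\tH_{i^-}$ in row $x_i$---yield \textbf{L2}, \textbf{L4}, and \textbf{L3} respectively. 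In the latter case, find the first column $k$ with $\rmst(w^k(\tau_T)) \neq \rmst(w^k(\tauE))$ and pick an inversion; since entries of distinct strips form disjoint consecutive blocks ordered by the strip index, relabeling the smaller-indexed strip as $i$ and the larger as $j$ recovers \textbf{L1}.

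The most delicate step will be the row-condition verification in the reverse inclusion. Establishing it amounts to showing that whenever $\tH_{p'}$ (on the left) and $\tH_p$ (on the right) are adjacent in a row of $\tcd(\alpha)$, the intermediate columns of $\bal_E$ between indices $p$ and $p'$ are linked in such a way that the SRT row-increasing axiom on $T$ transmits to the required inequality $T_{p'}^{-c_{p'}(T)} > T_p^{-1}$. Once this combinatorial reduction is achieved, the remainder of the argument is direct bookkeeping via the formula $y_{p,q} = q + \sum_{p' > p,\ x_p = x_{p'}} c_{p'}(T)$ relating entries of $T$ to positions in $\tau_T$.
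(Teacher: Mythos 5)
Your forward inclusion is sound and mirrors the paper's argument. The reverse inclusion, however, hinges on a claim that is false: the row-decreasing condition (3) for $\tau_T$ does \emph{not} hold automatically for every $T \in \SRT(\bal_E)$. Already in the setting of Example~\ref{Two sources E and F}, let $T$ be the SRT of shape $\bal_E = (1) \oplus (1,2,1,1,2,1)$ whose four columns, read top to bottom, are $(9)$, $(1,8)$, $(2,5,6,7)$, $(3,4)$. This $T$ satisfies the SRT row- and column-increasing axioms, yet $\tau_T$ has third row $4,3,8,1$, which is not weakly decreasing: the boundary between $\tau_T(\tH_4)$ and $\tau_T(\tH_2)$ is broken because $T_4^{-2} = 3 < 8 = T_2^{-1}$. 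The step you single out as ``the most delicate'' is therefore not merely hard but impossible in the generality you claim: when $p' > p+1$ and the intermediate columns of $\bal_E$ occupy rows of $\tcd(\alpha)$ other than $x_p = x_{p'}$, the SRT axioms on $T$ alone impose no relation between $T_{p'}^{-c_{p'}(T)}$ and $T_p^{-1}$.

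What keeps the lemma true is that every such failure of (3) is forced to come with an \textbf{L1} witness (in the counterexample the first-column word of $\tau_T$ standardizes to $321$ while that of $\tauE$ is $\id$). The chain of inequalities that would pin down $T_{p'}^{-c_{p'}(T)} > T_p^{-1}$ must pass through a third strip occupying both boundary columns in another row, and those cross-row comparisons are exactly the column-standardization comparisons that the absence of \textbf{L1} would supply; they are not consequences of $T$ being an SRT. You should therefore establish row-decreasing only under the hypothesis that no \textbf{L1} quadruple exists, rather than unconditionally and ahead of the case split --- or, equivalently, fold (3)-violations directly into the \textbf{L1} case. The rest of your reverse-direction bookkeeping (sorting triple-condition failures into \textbf{L2}/\textbf{L3}/\textbf{L4} according to whether the box $(x_i,y_{i,s}+1)$ is absent, inside $\tH_i$, or inside $\tH_{i^-}$, and extracting \textbf{L1} from a discrepant column word) is the right idea and matches what the paper leaves as ``routine.''
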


\begin{proof}
Let $T \in \Theta_{\alpha,E}$.
Assume that there exists a quadruple $(i,j,s,t)$ satisfying the condition ${\bf L1}$.
Then the entry at $(x_i,y_{i,s})$ is greater than that at $(x_j,y_{j,t})$ in $\tau_T$.
One can also see that the entry at $(x_i,y_{i,s})$ is less than that at $(x_j,y_{j,t})$ in $\tau_{\rtE}$
since $(\rtE)_i^{-s} < (\rtE)_j^{-t}$.
This implies that $\tau_T$ does not belong to $E$.
For the condition {\bf L1} in $\tau_T$, 
see below.
\begin{displaymath}
\begin{tikzpicture}[scale=0.8]
\def\hhh{13mm}
\def\vvv{6mm}
\def\www{0.20mm}
\draw[line width=\www, fill=red!20] (-\hhh*3,\vvv*0) rectangle (\hhh*1,\vvv*1);
\draw[line width=\www,dotted] (-\hhh*2,\vvv*1) -- (-\hhh*2,\vvv*0);
\draw[line width=\www,dotted] (-\hhh*1,\vvv*1) -- (-\hhh*1,\vvv*0);
\draw[line width=\www,dotted] (\hhh*0,\vvv*1) -- (\hhh*0,\vvv*0);
\draw[line width=\www, fill=blue!30] (-\hhh*2,-\vvv*2) rectangle (\hhh*4,-\vvv*1);
\draw[line width=\www,dotted] (-\hhh*1,-\vvv*2) -- (-\hhh*1,-\vvv*1);
\draw[line width=\www,dotted] (-\hhh*0,-\vvv*2) -- (-\hhh*0,-\vvv*1);
\draw[line width=\www,dotted] (\hhh*1,-\vvv*2) -- (\hhh*1,-\vvv*1);
\draw[line width=\www,dotted] (\hhh*2,-\vvv*2) -- (\hhh*2,-\vvv*1);
\draw[line width=\www,dotted] (\hhh*3,-\vvv*2) -- (\hhh*3,-\vvv*1);
\node[] at (-\hhh*3.3,\vvv*0.4) {\small $\tH_i$};
\node[] at (-\hhh*2.3,-\vvv*1.6) {\small $\tH_j$};
\node[] at (-\hhh*0.5,\vvv*0.5) {\tiny $(x_i,y_{i,s})$};
\node[] at (-\hhh*0.5,-\vvv*1.5) {\tiny $(x_j,y_{j,t})$};
\node[] at (-\hhh*1.5,-\vvv*0.5) {\small $\downineq$};
\node[] at (-\hhh*0.5,-\vvv*0.5) {\small \color{red} $\upineq$};
\node[] at (\hhh*0.5,-\vvv*0.5) {\small  $\downineq$};
\node[] at (-\hhh*4.5,-\vvv*0.5) {({\bf L1})};
\end{tikzpicture}
\end{displaymath}
Next, 
assume that there exists a quadruple $(i,j,s,t)$ satisfying the condition ${\bf L2}$.
Then $\tau_T$ is not an SPCT since
$s=c_i(T)$
and 
the entry at $(x_i,y_{i,s})$ is greater than that at $(x_j,y_{j,t})$ in $\tau_T$ but $(x_i,y_{i,s}+1) \notin \tcd(\alpha)$.
For the condition {\bf L2} in $\tau_T$, 
see below.
\begin{displaymath}
\begin{tikzpicture}[scale=0.8]
\def\hhh{14mm}
\def\vvv{6mm}
\def\www{0.20mm}
\def\hhhh{80mm}
\draw[line width=\www, fill=red!20] (-\hhh*3,\vvv*0) rectangle (\hhh*1,\vvv*1);
\draw[line width=\www,dotted] (-\hhh*2,\vvv*1) -- (-\hhh*2,\vvv*0);
\draw[line width=\www,dotted] (-\hhh*1,\vvv*1) -- (-\hhh*1,\vvv*0);
\draw[line width=\www,dotted] (\hhh*0,\vvv*1) -- (\hhh*0,\vvv*0);
\draw[line width=\www, fill=blue!30] (-\hhh*1,-\vvv*2) rectangle (\hhh*5,-\vvv*1);
\draw[line width=\www,dotted] (-\hhh*0,-\vvv*2) -- (-\hhh*0,-\vvv*1);
\draw[line width=\www,dotted] (\hhh*1,-\vvv*2) -- (\hhh*1,-\vvv*1);
\draw[line width=\www,dotted] (\hhh*2,-\vvv*2) -- (\hhh*2,-\vvv*1);
\draw[line width=\www,dotted] (\hhh*3,-\vvv*2) -- (\hhh*3,-\vvv*1);
\draw[line width=\www,dotted] (\hhh*4,-\vvv*2) -- (\hhh*4,-\vvv*1);
\node[] at (-\hhh*3.3,\vvv*0.4) {\small $\tH_i$};
\node[] at (-\hhh*1.3,-\vvv*1.6) {\small $\tH_j$};
\node[] at (\hhh*1.5,\vvv*0.5) {\color{red} $\emptyset$};
\node[] at (\hhh*0.5,\vvv*0.5) {\tiny $(x_i,y_{i,s})$};
\node[] at (\hhh*1.5,-\vvv*1.5) {\tiny $(x_j,y_{j,t})$};
\node[] at (\hhh*1,-\vvv*0.5) { $\diagineq$};
\node[] at (-\hhh*4.5,-\vvv*0.5) {({\bf L2})};
\end{tikzpicture}
\end{displaymath}
Finally, assume that 
there exists a quadruple $(i,j,s,t)$ satisfying the condition ${\bf L3}$ or ${\bf L4}$,
then the entries at $(x_{i},y_{i,s}), (x_{i^-},y_{i^-,1})$ and $(x_j,y_{j,t})$,
or 
$(x_i,y_{i,s}), (x_i,y_{i,s-1})$ and $(x_j,y_{j,t})$ in $\tau_T$ 
do not satisfy the triple condition.
For the condition {\bf L3} or {\bf L4} in $\tau_T$, 
see below.
\begin{displaymath}
\begin{tikzpicture}[scale=0.8]
\def\hhh{17mm}
\def\vvv{6mm}
\def\www{0.20mm}
\def\hhhh{90mm}
\draw[line width=\www, fill=red!20] (-\hhh*3,\vvv*0) rectangle (\hhh*1,\vvv*1);
\draw[line width=\www,dotted] (-\hhh*2,\vvv*1) -- (-\hhh*2,\vvv*0);
\draw[line width=\www,dotted] (-\hhh*1,\vvv*1) -- (-\hhh*1,\vvv*0);
\draw[line width=\www,dotted] (\hhh*0,\vvv*1) -- (\hhh*0,\vvv*0);
\draw[line width=\www, fill=gray!10] (\hhh*1,\vvv*0) rectangle (\hhh*4,\vvv*1);
\draw[line width=\www,dotted] (\hhh*2,\vvv*1) -- (\hhh*2,\vvv*0);
\draw[line width=\www,dotted] (\hhh*3,\vvv*1) -- (\hhh*3,\vvv*0);
\draw[line width=\www, fill=blue!30] (-\hhh*1,-\vvv*2) rectangle (\hhh*5,-\vvv*1);
\draw[line width=\www,dotted] (-\hhh*0,-\vvv*2) -- (-\hhh*0,-\vvv*1);
\draw[line width=\www,dotted] (\hhh*1,-\vvv*2) -- (\hhh*1,-\vvv*1);
\draw[line width=\www,dotted] (\hhh*2,-\vvv*2) -- (\hhh*2,-\vvv*1);
\draw[line width=\www,dotted] (\hhh*3,-\vvv*2) -- (\hhh*3,-\vvv*1);
\draw[line width=\www,dotted] (\hhh*4,-\vvv*2) -- (\hhh*4,-\vvv*1);
\node[] at (-\hhh*3.3,\vvv*0.4) {\small $\tH_i$};
\node[right] at (\hhh*4,\vvv*0.4) {\small $\tH_{i^-}$};
\node[] at (-\hhh*1.3,-\vvv*1.6) {\small $\tH_j$};
\node[] at (\hhh*1.5,-\vvv*0.5) {\color{red} $\downineq$};
\node[] at (\hhh*0.5,\vvv*0.5) {\tiny $(x_i,y_{i,s})$};
\node[] at (\hhh*1.5,\vvv*0.5) {\tiny $(x_{i^-},y_{i^-,1})$};

\node[] at (\hhh*1.5,-\vvv*1.5) {\tiny $(x_j,y_{j,t})$};
\node[] at (\hhh*1,-\vvv*0.5) {$\diagineq$};
\node[] at (-\hhh*4,-\vvv*0.5) {({\bf L3})};
\end{tikzpicture}
\end{displaymath}
\begin{displaymath}
\begin{tikzpicture}[scale=0.8]
\def\hhh{17mm}
\def\vvv{6mm}
\def\www{0.20mm}
\draw[line width=\www, fill=red!20] (-\hhh*3,\vvv*0) rectangle (\hhh*1,\vvv*1);
\draw[line width=\www,dotted] (-\hhh*2,\vvv*1) -- (-\hhh*2,\vvv*0);
\draw[line width=\www,dotted] (-\hhh*1,\vvv*1) -- (-\hhh*1,\vvv*0);
\draw[line width=\www,dotted] (\hhh*0,\vvv*1) -- (\hhh*0,\vvv*0);
\draw[line width=\www, fill=blue!30] (-\hhh*1,-\vvv*2) rectangle (\hhh*4,-\vvv*1);
\draw[line width=\www,dotted] (-\hhh*0,-\vvv*2) -- (-\hhh*0,-\vvv*1);
\draw[line width=\www,dotted] (\hhh*1,-\vvv*2) -- (\hhh*1,-\vvv*1);
\draw[line width=\www,dotted] (\hhh*2,-\vvv*2) -- (\hhh*2,-\vvv*1);
\draw[line width=\www,dotted] (\hhh*3,-\vvv*2) -- (\hhh*3,-\vvv*1);
\node[] at (-\hhh*3.3,\vvv*0.4) {\small $\tH_i$};
\node[] at (-\hhh*1.3,-\vvv*1.6) {\small $\tH_j$};
\node[] at (-\hhh*0.5,\vvv*0.5) {\tiny $(x_i,y_{i,s})$};
\node[] at (\hhh*0.5,\vvv*0.5) {\tiny $(x_i,y_{i,s-1})$};
\node[] at (\hhh*0.5,-\vvv*1.5) {\tiny $(x_j,y_{j,t})$};
%
\node[] at (\hhh*0,-\vvv*0.5) { $\diagineq$};
\node[] at (\hhh*0.5,-\vvv*0.5) { \small \color{red}
$\downineq$};
\node[] at (-\hhh*4,-\vvv*0.5) {({\bf L4})};
\end{tikzpicture}
\end{displaymath}
In any cases, $\tau_T$ cannot be  an SPCT.
So $\Theta_{\alpha,E} \subseteq \ker(\eta)$.

On the other hand, if $T \in \SRT(\bal_E) \cap \ker(\eta)$, then
$\tau_T$ is either
an SPCT not in $E$ or a non-SPCT. 
One can see in a routine way that there exists a quadruple $(i,j,s,t)$ satisfying ${\bf L1}$ in the former case 
and one of conditions ${\bf L2}, {\bf L3}$ or ${\bf L4}$ in the latter case.
\end{proof}

Let $T$ be an SRT of shape $\bal_E$. 
In the following, We will give a sufficient condition for $T$ to be in $\rad(\bbfP_{\bal_E})$.
To do this, we introduce necessary definitions and notation.

\begin{definition}\hfill
\begin{enumerate}[label = {\rm (\arabic*)}]
\item Given a filling $T$ of a generalized ribbon diagram of size $n$, we define $T[i]$ ($1 \le i \le n$) to be the $i$th entry when we read $T$ from bottom to top and from left to right.
\item Let $\bal$ be a generalized composition of $n$. For each $\beta \in [\bal]$, let $T \in \SRT(\beta)$. We define $T^\bal$ to be the filling of $\trd(\bal)$ such that
$T^\bal[i] := T[i]$ for each $1 \le i \le n$.
\end{enumerate}
\end{definition}

For instance, let $\bal = (1) \oplus (1) \oplus (2,1,2,2)$
and $\beta = (1) \cdot (1) \odot (2,1,2,2) = (1,3,1,2,2)$.
Then  
\begin{displaymath}
\begin{array}{l}
T^\bal = 
\end{array}
\begin{array}{l}
\begin{ytableau}
\none & \none & \none & \none & 2 & 8  \\
\none & \none & \none & 3 & 5 \\
\none & \none & \none & 7  \\
\none & \none &  6  & 9\\
\none &  1 \\
4 
\end{ytableau}
\end{array}
\qquad
\text{if}
\begin{array}{l}
\hskip 1mm
T =
\end{array}
\begin{array}{l}
\begin{ytableau}
\none & \none & \none & 2 & 8 \\
\none & \none & 3 & 5 \\
\none & \none & 7 \\
1 & 6 & 9  \\
4
\end{ytableau}
\end{array}
\in \SRT(\beta).
\end{displaymath}

From the construction it follows that $T^\bal \in \SRT(\bal)$ and the map 
$$
\kappa: \bigcup_{\beta \in [\bal]} \SRT(\beta) \ra  \SRT(\bal), \quad T \mapsto T^\bal
$$
is a bijection.

Let $\alpha$ and $\beta$ be compositions.
We say that $\alpha$ is a \emph{coarsening} of $\beta$, denoted by $\alpha \succeq \beta$, if we can obtain the parts of $\alpha$ in order by adding together adjacent parts of $\beta$ in order. 
We also denote by $\alpha \succdot \beta$ 
if $\alpha \succ \beta$ but there is no $\gamma$ such that $\alpha \succ \gamma$ and $\gamma \succ \beta$.
Then $([\bal], \preceq)$ is a partially order set for any generalized composition $\bal$
and it has a unique maximal element 
$
\bal^{\max} := \alpha^{(1)} \odot \alpha^{(2)} \odot \cdots \odot \alpha^{(k)}
$
and a unique minimal element
$
\bal^{\min} := \alpha^{(1)} \cdot \alpha^{(2)} \cdot \hskip1pt \cdots \hskip1pt \cdot \alpha^{(k)}.
$
It should be noticed that
$\bal^{\max}$ and $\bal^{\min}$ are not generalized compositions but compositions.

Let $T \in \SRT(\bal)$.
The descent set of $T$ is defined by
$$
\Des(T) := \{i \in [n-1] \mid i+1 \text{ appears strictly right of } i  \}.
$$
Recall that  $T_{0}^{(\beta)}$ is the SRT obtained by filling $\trd(\beta)$
with entries $1, 2, \ldots, n$ from top to bottom and from left to right
(see Subsection~\ref{subsec: PIM}).
Then one sees that
$$
\Des(T_{0}^{(\beta)}) = \{\beta^\rmc_1,\beta^\rmc_1 + \beta^\rmc_2, \ldots, \beta^\rmc_1 + \cdots +  \beta^\rmc_{m-1}\},
$$
where $m = \ell(\beta^\rmc)$.

\begin{lemma}\label{Lem: To be contained in Rad}
Let $T$ be an SRT of shape $\bal_E$. 
Suppose that $T = \pi_\sigma \cdot \rtE$ for some $\sigma \in \SG_n$
and $s_{i_l} \cdots s_{i_1}$ is a reduced expression for $\sigma$.
If $i_j \in \Des(T_0^{(\bal_E^{\min})})$ for some $1 \leq j \leq l$, then $T$ is contained in $\rad(\bbfP_{\bal_E})$.
\end{lemma}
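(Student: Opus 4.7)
The plan is to prove $T \in \rad(\bbfP_{\bal_E})$ by verifying that $T$ maps to zero in every simple quotient of $\bbfP_{\bal_E}$. Combining the isomorphism $\bfP_\alpha \cong \bbfP_{\alpha^\rmc}$ for compositions with Theorem~\ref{thm: bfP isom to calP}(b), one obtains a decomposition $\bbfP_{\bal_E} \cong \bigoplus_{\beta \in [\bal_E]} \bbfP_\beta$ into projective indecomposables with tops $\bfF_{\beta^\rmc}$; since distinct $\beta \in [\bal_E]$ have distinct complements $\beta^\rmc$, these tops are pairwise non-isomorphic. Consequently the simple quotients of $\bbfP_{\bal_E}$ are exactly $\{\bfF_{\beta^\rmc} : \beta \in [\bal_E]\}$, and for each $\beta$ I would fix a surjection $p_\beta: \bbfP_{\bal_E} \twoheadrightarrow \bfF_{\beta^\rmc}$. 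Cyclicity of $\bbfP_{\bal_E}$, generated by $\rtE$ as recorded in Subsection~\ref{subsec: PIM}, forces $p_\beta(\rtE) \ne 0$, so after rescaling we may take $p_\beta(\rtE) = v_{\beta^\rmc}$, whence $p_\beta(T) = \pi_\sigma \cdot v_{\beta^\rmc}$.

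The second, combinatorial step is to show $\set((\bal_E^{\min})^\rmc) \subseteq \set(\beta^\rmc)$ for every $\beta \in [\bal_E]$. By definition $\bal_E^{\min}$ is obtained from $\bal_E$ by replacing every $\oplus$ with $\cdot$; since concatenation preserves all part-boundaries whereas near-concatenation $\odot$ merges two adjacent parts into one and thus deletes one boundary, any $\beta \in [\bal_E]$ satisfies $\bal_E^{\min} \preceq \beta$, giving $\set(\beta) \subseteq \set(\bal_E^{\min})$. Taking complements in $[n-1]$ yields the desired inclusion. Since $T_0^{(\bal_E^{\min})}$ fills its ribbon column by column, $\Des(T_0^{(\bal_E^{\min})}) = \set((\bal_E^{\min})^\rmc)$, so the hypothesis $i_j \in \Des(T_0^{(\bal_E^{\min})})$ translates to $i_j \in \set(\beta^\rmc)$ for every $\beta$.

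It remains to evaluate $p_\beta(T) = \pi_{i_l} \cdots \pi_{i_1} \cdot v_{\beta^\rmc}$ inside $\bfF_{\beta^\rmc}$. The irreducible action is $\pi_i v_{\beta^\rmc} = 0$ if $i \in \set(\beta^\rmc)$ and $v_{\beta^\rmc}$ otherwise; applying the factors one at a time from the right, the running value remains in $\{0, v_{\beta^\rmc}\}$ and gets annihilated at the step indexed by $j$, so $p_\beta(T) = 0$. As this holds for every $\beta \in [\bal_E]$, $T$ lies in the intersection of the kernels of all surjections to simples, hence in $\rad(\bbfP_{\bal_E})$. The main delicate point will be verifying the refinement claim $\bal_E^{\min} \preceq \beta$ for arbitrary $\beta \in [\bal_E]$; once that is in place, the rest is a routine unwinding of the top of $\bbfP_{\bal_E}$ and the action of $H_n(0)$ on its simple quotients.
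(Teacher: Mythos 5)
Your proof is correct and follows essentially the same route as the paper's. Both rest on the decomposition $\bbfP_{\bal_E} \cong \bigoplus_{\beta \in [\bal_E]} \bbfP_\beta$ into pairwise non-isomorphic projective indecomposables together with the inclusion $\Des(T_0^{(\bal_E^{\min})}) \subseteq \set(\beta^\rmc)$ for every $\beta \in [\bal_E]$; you read off the vanishing of $T$ in each simple quotient $\bfF_{\beta^\rmc}$, whereas the paper phrases the identical computation as the vanishing of the $T_0^{(\beta)}$-coefficient of $f(T)$ in each summand $\bbfP_\beta$, using that $\rad(\bbfP_\beta)$ is spanned by $\SRT(\beta)\setminus\{T_0^{(\beta)}\}$.
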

\begin{proof}
By Theorem~\ref{thm: bfP isom to calP}(b) together with $\bfP_\alpha \cong \bbfP_{\alpha^\rmc}$,
we can choose an isomorphism 
$$
f:\bbfP_{\bal_E} \ra \bigoplus_{\beta \in [\bal_E]} \bbfP_{\beta}.
$$
Since $f(\rtE)$ is a generator of $\bigoplus_{\beta \in [\bal_E]} \bbfP_{\beta}$,
it should be written as 
$$
\sum_{\beta \in [\bal_E]} 
\left(c_\beta T^{(\beta)}_0 + \text{ lower terms in } \SRT(\beta)  \right).
$$
Here $c_\beta$ is nonzero for all $\beta \in [\bal_E]$.
Note that $\Des(T_{0}^{(\alpha)}) \supsetneq \Des(T_0^{(\alpha')})$ if $\alpha \succdot \alpha'$,
thus $\Des(T_{0}^{(\beta)}) \supsetneq \Des(T_0^{(\bal_E^{\min})})$
for all $\beta \in [\bal_E]$.
This implies that 
if $\pi_i \cdot (T_0^{(\bal_E^{\min})}) \neq (T_0^{(\bal_E^{\min})})$ for some $1 \leq i \leq n-1$,
then $\pi_i\cdot (T_0^{(\beta)}) \neq (T_0^{(\beta)})$ for all $\beta \in [\bal_E]$.
Due to this phenomenon, our assumption $i_j \in \Des(T_0^{\bal_E^{\min}})$ implies that  
none of $T_0^{(\beta)}$ terms appear in $f(T)$.
Since 
$$
f(\rad(\bbfP_{\bal_E})) =  \bigoplus_{\beta \in [\bal_E]} \rad(\bbfP_{\beta})
$$
and $\rad(\bbfP_{\beta})$ is the $\C$-span of $\SRT(\beta)\setminus\{T_0^{(\beta)}\}$,
we conclude that $T \in \rad(\bbfP_{\bal_E})$.
\end{proof}

Next, we provide special tableaux in $\SRT(\bal_E)$ which are outside of $\ker(\eta)$.

\begin{lemma}\label{Lem:T0 are SPCTs}
For all $\beta \in [\bal_E]$,
we have that
$\kappa(T_0^{(\beta)}) \notin \ker(\eta)$.
\end{lemma}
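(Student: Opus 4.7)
The plan is to show directly that $\tau_{\kappa(T_0^{(\beta)})}$ is an SPCT lying in the class $E$ for every $\beta \in [\bal_E]$; by the definition of $\eta$, this yields $\kappa(T_0^{(\beta)}) \notin \ker(\eta)$. Setting $a_i := T_0^{(\beta)}[i]$, the fact that $\kappa$ preserves the bottom-to-top, left-to-right reading forces the $j$th column of $\kappa(T_0^{(\beta)})$ to read $a_{d_{j-1}+1}, \ldots, a_{d_j}$ from bottom to top, so $\eta$ places these values in $\tH_j$ from left to right. Combining this with the fact that in $\tauE$ the strip $\tH_j$ is filled left-to-right by $d_j, d_j-1, \ldots, d_{j-1}+1$, I see that $\tau_{\kappa(T_0^{(\beta)})}$ is obtained from $\tauE$ by the substitution $m \mapsto a_{d_{j-1}+d_j+1-m}$ for each $m \in [d_{j-1}+1, d_j]$.

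Next I would partition $\{1, 2, \ldots, m+1\}$ into super-groups $J_1, \ldots, J_s$ matching the columns of $\trd(\beta)$: consecutive indices $j, j+1$ lie in the same super-group exactly when $\square_{j+1}$ was $\oplus$ in $\bal_E$ and is replaced by $\cdot$ in $\beta$. Direct inspection of $T_0^{(\beta)}$ shows that the substitution permutes the integer interval $[d_{\min(J_l)-1}+1, d_{\max(J_l)}]$ within itself for each $l$. The crux is the following structural lemma: for $j < j'$ lying in the same $J_l$, the horizontal strips $\tH_j$ and $\tH_{j'}$ occupy disjoint column ranges in $\tcd(\alpha)$ (with $\tH_j$ strictly left of $\tH_{j'}$), and moreover they lie in different rows. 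For adjacent indices, the descent condition at $d_j$ forces the rightmost column of $\tH_{j+1}$ to be weakly right of the leftmost column of $\tH_j$, and the disconnected condition defining the super-group (no shared column, not attacking) then forces the leftmost column of $\tH_{j+1}$ to strictly exceed the rightmost column of $\tH_j$; transitivity extends the comparison to arbitrary $j < j'$ in $J_l$. Row-disjointness follows because the row-decreasing property of $\tauE$ requires higher-index strips (carrying larger entries) to sit to the left within any single row, contradicting the column ordering just established.

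Granted the structural lemma, the three SPCT axioms and coincidence of standardized column words for $\tau_{\kappa(T_0^{(\beta)})}$ follow. The equality of column words and the row-decrease are straightforward: any two entries of $\tauE$ in a single column (resp. row) of $\tcd(\alpha)$ must come from strips in different super-groups by the structural lemma, and the substitution respects the total order across super-groups. I expect the triple condition to be the main obstacle, since within a super-group the substitution reverses the internal order. The one case where this could cause trouble is $(i,k) \in \tH_p$ and $(j,k+1) \in \tH_{p'}$ with $p < p'$ lying in a common super-group; there, the column-disjointness forces the rightmost column of $\tH_p$ to equal $k$ and the leftmost column of $\tH_{p'}$ to equal $k+1$, so the triple-condition hypothesis $i < j$ puts $(j,k+1)$ strictly lower-right of $(i,k)$, exactly the attacking configuration that the disconnected condition of $J_l$ forbids. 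This contradiction closes the argument, so $\tau_{\kappa(T_0^{(\beta)})} \in E$ and the lemma follows.
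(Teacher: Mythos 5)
Your proof is essentially correct, but it follows a genuinely different route from the paper's argument, which is worth noting. The paper proves the lemma by induction on the number of concatenations $n_\beta$ in $\beta \in [\bal_E]$: starting from $\bal_E^{\max}$ (where the claim is immediate since $\tau_{\kappa(T_0^{(\bal_E^{\max})})} = \tauE$), it picks $\gamma \precdot \beta$, isolates the two observations (O1), (O2) relating $T_0^{(\gamma)}$ to $T_0^{(\beta)}$, and then rules out a quadruple satisfying each of the four conditions {\bf L1}--{\bf L4} from Lemma~\ref{lem: kernel of eta} by appealing to the inductive hypothesis. You instead give a \emph{direct} proof for all $\beta$ at once: you make the column-merge structure of $\trd(\beta)$ explicit via the super-group partition, write the passage from $\tauE$ to $\tau_{\kappa(T_0^{(\beta)})}$ as an entry substitution that is a shift on each strip, permutes each super-group interval $[d_{\min(J_l)-1}+1, d_{\max(J_l)}]$, and preserves order across super-groups, and then verify the SPCT axioms and the coincidence of standardized column words from the structural lemma. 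Your structural lemma is essentially a global reformulation of the paper's observation (O2). The direct approach is arguably cleaner and makes the substitution structure transparent, at the cost of a more delicate case analysis for the triple condition; the paper's induction keeps each step small but obscures the global picture.

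Two small imprecisions you should tighten. First, the statement that ``any two entries of $\tauE$ in a single column (resp.~row) must come from strips in different super-groups'' is false for two entries in the same row belonging to the same strip $\tH_j$; you still get the conclusion there because the substitution is order-preserving within each $\tH_j$, but the statement as written is not quite right. Second, in the trouble case for the triple condition you invoke the non-attacking condition between $\tH_p$ and $\tH_{p'}$, but that condition in the construction of $\bal_E$ is stated only for \emph{consecutive} strips. You need to observe that $p' = p+1$ is forced: the structural lemma gives the rightmost column of $\tH_p$ equal to $k$ and the leftmost column of $\tH_{p'}$ equal to $k+1$, so any intermediate strip $\tH_q$ with $p < q < p'$ in the super-group would have to occupy columns strictly between $k$ and $k+1$, which is impossible. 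With $p' = p+1$, the attacking configuration you identify directly contradicts the $\oplus$ in $\bal_E$ between those two strips, and the argument closes as you intended.
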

\begin{proof}
Let $\bal_E = \alpha^{(1)} \oplus \alpha^{(2)} \oplus \cdots \oplus \alpha^{(k)}$.
If $k=1$, then $[\bal_E] = \{\alpha^{(1)}\}$ and 
the image of $\kappa(T_0^{(\alpha^{(1)})})$ under $\eta$ is the source tableau $\tauE$,
thus the assertion is obviously true.
From now on, we assume that $k \geq 2$.
For any $\beta \in [\bal_E]$, it can be written as
$$
\alpha^{(1)} \  \square \  \alpha^{(2)} \ \square \ \cdots \  \square  \ \alpha^{(k)}
\quad 
\text{where }
\square \in \{\odot, \cdot\}.
$$
Let $n_\beta$ be the number of $\cdot$'s in this expression.
We will prove the assertion by induction on $n_\beta$.

When $n_\beta = 0$, equivalently, $\beta = \bal_E^{\max}$,
the image of $\kappa(T_0^{(\beta)})$ under $\eta$ is the source tableau $\tauE$.
So we are done.

Let $l$ be an arbitrary positive integer less than $k-1$.
Assume that the assertion is true for all $\beta \in [\bal_E]$ with $n_\beta<l$.
Pick up any $\gamma \in [\bal_E]$ with $n_\gamma = l$.
Then there exists $\beta \in [\bal_E]$ such that $\gamma \precdot \beta$, that is, $n_\gamma = n_\beta + 1$.
We claim that $\kappa(T_0^{(\gamma)})$ has no quadruples satisfying the conditions {\bf L1}, {\bf L2}, {\bf L3} and {\bf L4}
in~\eqref{Eq: Description of kernel}.
Let $p$ be the smallest positive integer such that $c_p(T_0^{(\gamma)}) \neq c_p(T_0^{(\beta)})$.
Then $T_0^{(\gamma)}$ and $T_0^{(\beta)}$ are identical if 
we ignore the $p$th column of $T_0^{(\gamma)}$ and the $p$th and $(p+1)$st column of $T_0^{(\beta)}$.
And the $p$th column of $T_0^{(\gamma)}$ is obtained by merging 
the $p$th and $(p+1)$st column of $T_0^{(\beta)}$
in such a way that the entries are increasing from top to bottom.
Let the $p$th column of $T^{(\gamma)}_0$ be decomposed into column $p'$ throughout column $p''$ in $\kappa(T_0^{(\gamma)})$.

Let us investigate the horizontal strips $\tH_r$ in $\tau_{\kappa(T_0^{(\beta)})}$ and $\tau_{\kappa(T_0^{(\gamma)})}$.
By the definition of $p'$ and $p''$, one can easily observe the following:
\begin{enumerate}[label = {\rm (O\arabic*)}]
\item
For $r < p'$ or $r>p''$, 
$\tH_r$ in $\tau_{\kappa(T_0^{(\gamma)})}$ is identical to that in $\tau_{\kappa(T_0^{(\beta)})}$, that is,
$$
\tau_{\kappa(T_0^{(\gamma)})}(\tH_r) = \tau_{\kappa(T_0^{(\beta)})}(\tH_r).
$$

\item For $p' \leq r < p''$,
$\tH_r$ and $\tH_{r+1}$ 
do not share a column.
In addition, they are 
nonattacking in $\tcd(\alpha)$,
where $\alpha$ is the shape of $\tauE$.
Since $\tauE$ is a source tableau,
one can easily see that 
$\tH_{r+1}$ is strictly right 
of $\tH_{r}$, 
in other words,
the leftmost box of $\tH_{r+1}$ is strictly right of the rightmost box of $\tH_r$.
This tells us that all entries in column $p$ and column $p+1$ of $T_0^{(\beta)}$ appear in mutually distinct columns of $\tau_{\kappa(T_0^{(\beta)})}$.
By (O1), the same phenomenon also happens for column $p$ of $T_0^{(\gamma)}$.
\end{enumerate}

Now let us show that there are no quadruples $(i,j,s,t)$ in $\kappa(T_0^{(\gamma)})$ satisfying {\bf L1}.
To do this, we have only to consider $(i,j)$'s with $p' \leq i < j \leq p''$.
The observation (O2) shows that $y_{i,s} \neq y_{j,t}$ for all $s$ and $t$,
and thus our assertion is verified. 

Next, let us show that there are no quadruples $(i,j,s,t)$ in $\kappa(T_0^{(\gamma)})$ satisfying {\bf L2}, {\bf L3} or {\bf L4}.
We have three cases.
If $i, j \notin [p',p'']$, then our assertion follows from the induction hypothesis and the observation (O1).
If $i, j \in [p',p'']$, then our assertion follows from the observation (O2),
which is that $y_{i,s} \neq y_{j,t}$ for all $s$ and $t$.
Finally, let us deal with the remaining case. 
When $s < c_i(\kappa(T_0^{(\gamma)}))$, we have either 
$$
(\kappa(T_0^{(\gamma)}))_i^{-s} < (\kappa(T_0^{(\gamma)}))_j^{-t} 
\quad \text{for all $s$ and $t$,}
$$
or 
$$
(\kappa(T_0^{(\gamma)}))_i^{-s} > (\kappa(T_0^{(\gamma)}))_j^{-t}
\quad \text{for all $s$ and $t$},
$$
which implies that there are no quadruples $(i,j,s,t)$ satisfying {\bf L4}.

In the following, let $s = c_i(\kappa(T_0^{(\gamma)}))$
and 
suppose that 
there exists a positive integer $t$ satisfying that 
$$
(\kappa(T_0^{(\gamma)}))_{i}^{-s} > (\kappa(T_0^{(\gamma)}))_{j}^{-t},~ 
x_i < x_j,
\text{ and } y_{i,s} = y_{j,t}-1.
$$
The observation (O1)
implies that
\begin{equation}\label{Eq:A1}
(\kappa(T_0^{(\beta)}))_{i}^{-s} > (\kappa(T_0^{(\beta)}))_{j}^{-t}.
\end{equation}
Hence, from the induction hypothesis it follows that
$\tau_{\kappa(T_0^{(\beta)})}$ is an SPCT,
which again implies that the box $(x_i,y_{i,s}+1)$ is  in $\tcd(\alpha)$.
This means that there are no quadruples $(i,j,s,t)$ in $\kappa(T_0^{(\gamma)})$ satisfying {\bf L2}.
To deal with {\bf L3},  
recall that 
$i^-$ is the column index $<i$ of $T$
uniquely determined by the conditions $x_{i} = x_{i^-}$ and $y_{i^-,1} =y_{i,s}+1$.
We already showed that  $(x_i,y_{i,s}+1) \in \tcd(\bal_E)$,
which appears in the horizontal strip $\tH_{i^-}$ in $\tcd(\alpha)$.
Combining the induction 
hypothesis with~\eqref{Eq:A1},
we can derive the inequality
\begin{displaymath}
(\kappa(T_0^{(\beta)}))_{i^-}^{-s} > (\kappa(T_0^{(\beta)}))_{j}^{-t}.
\end{displaymath}
From the observation (O1)
we have 
\begin{displaymath}
(\kappa(T_0^{(\gamma)}))_{i^-}^{-s} > (\kappa(T_0^{(\gamma)}))_{j}^{-t},~
\end{displaymath}
this means that 
there are no quadruples $(i,j,s,t)$ satisfying {\bf L3}.
\end{proof}

With this preparation, the main result of this section can be stated as follows.
\begin{theorem}\label{Thm:projective cover S}
For an arbitrary class $E \in \mathcal{E}^\sigma(\alpha)$,
$\bbfP_{\bal_E}$ is the projective cover of $\bfS^\sigma_{\alpha,E}$. 
\end{theorem}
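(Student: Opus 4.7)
The plan is to verify the hypotheses of Lemma~\ref{lem: ess epi}. By Theorem~\ref{thm: bfP isom to calP}(b) together with the isomorphism $\bfP_\alpha \cong \bbfP_{\alpha^\rmc}$, the module $\bbfP_{\bal_E}$ is projective, and Theorem~\ref{Thm:Surjective homo} provides the surjection $\eta : \bbfP_{\bal_E} \to \bfS^\sigma_{\alpha,E}$. So the remaining task is to show that $\eta$ is essential, i.e., that $\ker(\eta) \subseteq \rad(\bbfP_{\bal_E})$.

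By Lemma~\ref{lem: kernel of eta}, it suffices to check that every element $T$ of the spanning set $\Theta_{\alpha,E}$ of $\ker(\eta)$ lies in $\rad(\bbfP_{\bal_E})$. Since $\rtE$ cyclically generates $\bbfP_{\bal_E}$, we may write $T = \pi_\sigma \cdot \rtE$ for some $\sigma \in \SG_n$; note that $T$ cannot itself be one of the ``top'' tableaux $\kappa(T_0^{(\beta)})$, since Lemma~\ref{Lem:T0 are SPCTs} places all such $\kappa(T_0^{(\beta)})$ outside $\ker(\eta)$. In view of Lemma~\ref{Lem: To be contained in Rad}, the remaining task is to produce, for each $T \in \Theta_{\alpha,E}$, a single reduced expression $s_{i_l}\cdots s_{i_1}$ for $\sigma$ that uses at least one index $i_j \in \Des(T_0^{(\bal_E^{\min})})$.

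I would produce such a reduced expression by a case analysis on the defect quadruple $(i,j,s,t)$ recorded in the kernel description~\eqref{Eq: Description of kernel}. In every case, the conditions on $T_i^{-s}$ versus $T_j^{-t}$ say that a certain pair of entries lies in distinct columns of $\trd(\bal_E)$ but in positions incompatible with the SPCT structure (or with membership in the equivalence class $E$); in particular, transporting $\rtE$ to $T$ by adjacent transpositions must ``cross'' at least one column-boundary of $\trd(\bal_E^{\min})$. These column-boundaries are precisely the prefix sums of $(\bal_E^{\min})^\rmc$, which coincide with $\Des(T_0^{(\bal_E^{\min})})$. So one can isolate such a crossing and insert the corresponding $s_p$ into a reduced word for $\sigma$, giving the desired index $i_j \in \Des(T_0^{(\bal_E^{\min})})$.

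The principal technical obstacle is this case analysis of {\bf L1}--{\bf L4}: the latter three cases additionally involve the auxiliary entries $T_{i^-}^{-1}$ or $T_i^{-(s+1)}$ arising from the triple-condition failure, and one has to check carefully that the $\odot$-/$\cdot$-junctures of $\bal_E$ at the relevant positions sit above a genuine column-boundary of $\bal_E^{\min}$ (rather than inside a single column of $\trd(\bal_E^{\min})$). Once this is carried out uniformly in all four cases, Lemma~\ref{Lem: To be contained in Rad} yields $T \in \rad(\bbfP_{\bal_E})$ for every $T \in \Theta_{\alpha,E}$, and Lemma~\ref{lem: ess epi} then certifies $\eta$ as an essential epimorphism, completing the proof that $\bbfP_{\bal_E}$ is the projective cover of $\bfS^\sigma_{\alpha,E}$.
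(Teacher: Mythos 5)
Your high-level architecture matches the paper's: invoke Lemma~\ref{lem: ess epi} to reduce to $\ker(\eta)\subseteq\rad(\bbfP_{\bal_E})$, describe $\ker(\eta)$ via Lemma~\ref{lem: kernel of eta}, and then apply Lemma~\ref{Lem: To be contained in Rad} by exhibiting a reduced word that hits $\Des(T_0^{(\bal_E^{\min})})$. But the central step is not carried out, and the route you sketch skips the idea that makes the argument tractable.

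The paper does not try to prove the radical-membership claim directly for every $T$ in the spanning set $\Theta_{\alpha,E}$. It introduces the \emph{boundary} subset
$\Theta^{\rm gen}_{\alpha,E}=\{T\in\SRT(\bal_E)\cap\ker(\eta)\mid s_q\cdot T\in\SRT(\bal_E)\setminus\ker(\eta)\text{ for some }q\}$,
observes that it generates $\ker(\eta)$ as an $H_n(0)$-module, and proves the claim only for these $T$. That restriction is essential: for $T\in\Theta^{\rm gen}_{\alpha,E}$, the neighbour $T'=s_q\cdot T$ has $\tau_{T'}\in E$, so (via Lemma~\ref{Lem:T0 are SPCTs} and the parabolic-subgroup observation) the interval identity~\eqref{eq: interval and T'} holds for $T'$, and one can show that the two entries $q,q+1$ straddle a block boundary, forcing $q\in\Des(T_0^{(\bal_E^{\min})})$. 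Your proposal works with an arbitrary $T\in\Theta_{\alpha,E}$, where you have no genuine SPCT to anchor the interval structure; the assertion that ``transporting $\rtE$ to $T$ by adjacent transpositions must cross a column-boundary of $\trd(\bal_E^{\min})$'' is precisely the thing that needs proof, and you offer no mechanism for it. (Equivalently, you need $\sigma\notin\SG_{n,\Des(T_0^{(\bal_E^{\min})})^\rmc}$ for every such $T$; that is true but not obvious, and the paper's way to get it is the detour through $\Theta^{\rm gen}_{\alpha,E}$.)

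You also explicitly leave the case analysis of {\bf L1}--{\bf L4} as ``the principal technical obstacle'' without resolving it, so even setting aside the structural issue above, the proof is incomplete at exactly the point where all the work lies. To repair the argument, replace $\Theta_{\alpha,E}$ by $\Theta^{\rm gen}_{\alpha,E}$, use the non-kernel neighbour $T'=s_q\cdot T$ to get the interval decomposition of $\tau_{T'}$, and then run the {\bf L1}--{\bf L4} dichotomy on the pair $(q,q+1)$ inside $\tau_{T'}$ to conclude $q\in\Des(T_0^{(\bal_E^{\min})})$ when no earlier letter of the reduced word already lies there.
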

\begin{proof}
For the assertion we must show that $\eta: \bbfP_{\bal_E} \to \bfS^\sigma_{\alpha,E}$ is an essential epimorphism, equivalently, $\ker(\eta) \subset \rad(\bbfP_{\bal_E})$ by Lemma~\ref{lem: ess epi}.
Set
\[
\Theta^{\rm gen}_{\alpha,E} :=
\left\{
T \in \SRT(\bal_E) \cap \ker(\eta) \; \middle| \;
s_q \cdot T \in \SRT(\bal_E) \setminus \ker(\eta)  \text{ for some $1 \le q \le n-1$}
\right\}.
\]
In view of Theorem~\ref{Thm:Surjective homo} and Lemma~\ref{lem: kernel of eta}, 
one can see that $\Theta^{\rm gen}_{\alpha,E}$ generates $\ker(\eta)$.
Hence we have only to show that $\Theta^{\rm gen}_{\alpha,E} \subseteq \rad(\bbfP_{\bal_E})$.

Suppose that $T \in \Theta^{\rm gen}_{\alpha,E}$ and $T' := s_q \cdot T \in \SRT(\bal_E) \setminus \ker(\eta)$ for some $1\le q \le n-1$.
Let $\rho$ be a permutation with $T' = \pi_\rho \cdot \rtE$.
Choose a reduced expression $s_{q_{l-1}} s_{q_{l-2}} \cdots s_{q_1}$ for $\rho$ and set $s_{q_l}:= s_q$.
To see that $T \in \rad(\bbfP_{\bal_E})$,
by Lemma~\ref{Lem: To be contained in Rad},
it suffices to show that 
there exists $1 \leq r \leq l$ such that $q_r \in \Des(T_0^{(\bal^{\min}_E)})$.

If there exists $1 \leq r \leq l-1$ such that $q_r \in \Des(T_0^{(\bal_E^{\min})})$, 
then we are done.
We now assume that none of $q_r$ is a descent of $T_0^{(\bal_E^{\min})}$ for all $1 \leq r \leq l-1$. 
For simplicity, let $(\bal_E^{\min})^\rmc = (a_1, a_2, \ldots, a_k)$ and set 
\[
X := \kappa(T_0^{(\bal_E^{\min})}).
\]

Let $q \in [a_{0}+\cdots+a_{v} +1, a_{0}+\cdots+a_{v+1}]$ for some $0 \leq v \leq k-1$.
Since $\tau_X \in E$ by Lemma~\ref{Lem:T0 are SPCTs},
there exist $v'$ and $v''$ such that
\begin{equation}\label{eq: v max min}
\begin{aligned}
[a_{0}+\cdots+a_{v} +1, a_{0}+\cdots+a_{v+1}]
& = \tau_X(\tH_{v'}) \cup \tau_X(\tH_{v'+1}) \cup \cdots \cup  \tau_X(\tH_{v''})\\
& = \tau_{\rtE}(\tH_{v'}) \cup \tau_{\rtE}(\tH_{v'+1}) \cup \cdots \cup  \tau_{\rtE}(\tH_{v''})
\end{aligned}
\end{equation}
as a set. 
Here the second equality follows from (i) in the proof of Lemma~\ref{Lem:T0 are SPCTs} and the equality $\rtE = \kappa(T_0^{(\bal_E^{\max})})$.
On the other hand,
the assumption that $q_r \notin \Des(T_0^{(\bal_E^{\min})})$ for all $1 \leq r \leq l-1$ says that
\begin{equation*}
s_{q_r} \in \SG_{[1,a_1]} \times 
\SG_{[a_1 + 1, a_1 + a_2]} \times 
\cdots \times 
\SG_{[a_1+\cdots+a_{k-1} +1, n]} \subset \SG_n 
\quad \text{for all $1 \leq r \leq l-1$}.
\end{equation*}
Combining this property with~\eqref{eq: v max min}, we can also derive that   
\begin{align}\label{eq: interval and T'}
[a_{0}+\cdots+a_{v} +1, a_{0}+\cdots+a_{v+1}] =
\tau_{T'}(\tH_{v'}) \cup \tau_{T'}(\tH_{v'+1}) \cup \cdots \cup \tau_{T'}(\tH_{v''})
\end{align}
as a set.

Since $T \in \ker(\eta)$ and $T' \in \SRT(\bal_E) \setminus \ker(\eta)$, by Lemma~\ref{lem: kernel of eta}, we have the following three cases:
\begin{enumerate}[label = {\rm (\roman*)}]
\item 
there is a quadruple $(i,j,s,t)$ satisfying {\bf L1},
\[
T_i^{-s} = q+1, ~\text{and}~ T_j^{-t} = q, \quad \text{or}
\]

\item
there is a quadruple $(i,j,s,t)$ satisfying {\bf L2} or {\bf L3} such that
\[
T_{i}^{-s} = q+1 ~\text{and}~ T_j^{-t} = q,  \quad \text{or}
\]

\item
there is a quadruple $(i,j,s,t)$ satisfying {\bf L4} such that
\[
T_i^{-s} = q+1 ~ \text{ and } ~ T_j^{-t} = q.
\]
\end{enumerate}
Let $(x_{u}, y_{u,w})$ be the box in $\tau_{T'}$ corresponding to $T'^{-w}_u$ for $1 \leq w \leq c_u(T')$ via $\eta$ (see~\eqref{eq: x and y def}).
In (i), we have
\[
i<j,~(T')_i^{-s} = q,~(T')_j^{-t} = q+1,~y_{i,s} =  y_{j,t}, \text{ and } x_i \neq x_j.
\]
The following figure shows how $\tH_i$ and $\tH_j$ appear in $\tau_{T'}$.
\begin{displaymath}
\begin{tikzpicture}[scale=0.8]
\def\hhh{8mm}
\def\vvv{6mm}
\def\www{0.20mm}
\draw[line width=\www, fill=red!20] (-\hhh*3,\vvv*0) rectangle (\hhh*1,\vvv*1);
\draw[line width=\www,dotted] (-\hhh*2,\vvv*1) -- (-\hhh*2,\vvv*0);
\draw[line width=\www,dotted] (-\hhh*1,\vvv*1) -- (-\hhh*1,\vvv*0);
\draw[line width=\www,dotted] (\hhh*0,\vvv*1) -- (\hhh*0,\vvv*0);
\draw[line width=\www, fill=blue!30] (-\hhh*2,-\vvv*2) rectangle (\hhh*4,-\vvv*1);
\draw[line width=\www,dotted] (-\hhh*1,-\vvv*2) -- (-\hhh*1,-\vvv*1);
\draw[line width=\www,dotted] (-\hhh*0,-\vvv*2) -- (-\hhh*0,-\vvv*1);
\draw[line width=\www,dotted] (\hhh*1,-\vvv*2) -- (\hhh*1,-\vvv*1);
\draw[line width=\www,dotted] (\hhh*2,-\vvv*2) -- (\hhh*2,-\vvv*1);
\draw[line width=\www,dotted] (\hhh*3,-\vvv*2) -- (\hhh*3,-\vvv*1);
\node[] at (-\hhh*3.5,\vvv*0.4) {\small $\tH_i$};
\node[] at (-\hhh*2.5,-\vvv*1.6) {\small $\tH_j$};
\node[] at (-\hhh*0.5,\vvv*0.5) {\tiny $q$};
\node[] at (-\hhh*0.5,-\vvv*1.5) {\tiny $q+1$};
\end{tikzpicture}
\begin{tikzpicture}[scale=0.8]
\def\hhh{8mm}
\def\vvv{-6mm}
\def\www{0.20mm}
\draw[line width=\www, fill=red!20] (-\hhh*3,\vvv*0) rectangle (\hhh*1,\vvv*1);
\draw[line width=\www,dotted] (-\hhh*2,\vvv*1) -- (-\hhh*2,\vvv*0);
\draw[line width=\www,dotted] (-\hhh*1,\vvv*1) -- (-\hhh*1,\vvv*0);
\draw[line width=\www,dotted] (\hhh*0,\vvv*1) -- (\hhh*0,\vvv*0);
\draw[line width=\www, fill=blue!30] (-\hhh*2,-\vvv*2) rectangle (\hhh*4,-\vvv*1);
\draw[line width=\www,dotted] (-\hhh*1,-\vvv*2) -- (-\hhh*1,-\vvv*1);
\draw[line width=\www,dotted] (-\hhh*0,-\vvv*2) -- (-\hhh*0,-\vvv*1);
\draw[line width=\www,dotted] (\hhh*1,-\vvv*2) -- (\hhh*1,-\vvv*1);
\draw[line width=\www,dotted] (\hhh*2,-\vvv*2) -- (\hhh*2,-\vvv*1);
\draw[line width=\www,dotted] (\hhh*3,-\vvv*2) -- (\hhh*3,-\vvv*1);
\node[] at (-\hhh*3.5,\vvv*0.6) {\small $\tH_i$};
\node[] at (-\hhh*2.5,-\vvv*1.4) {\small $\tH_j$};
\node[] at (-\hhh*0.5,\vvv*0.5) {\tiny $q$};
\node[] at (-\hhh*0.5,-\vvv*1.5) {\tiny $q+1$};
\node[] at (-\hhh*6,-\vvv*0) {};
\end{tikzpicture}
\end{displaymath}
In (ii), we have 
$$
(T')_i^{-s} = q, ~ (T')_j^{-t} = q+1, ~ 
x_i < x_j, ~ \text{ and } ~ \alpha_{x_i} = y_{i,s} = y_{j,t} -1
$$
or 
$$
(T')_i^{-s} = q, ~  (T')_j^{-t} = q+1, ~ (T')_{i^-}^{-1} < q, 
x_i < x_j, ~  \text{ and } ~
\alpha_{x_i} > y_{i,s} = y_{j,t}-1.
$$
The following figures show how $\tH_i$ and $\tH_j$ appear in $\tau_{T'}$. 
\begin{displaymath}
\begin{tikzpicture}[scale=0.8]
\def\hhh{8mm}
\def\vvv{6mm}
\def\www{0.20mm}
\def\hhhh{80mm}
\draw[line width=\www, fill=red!20] (-\hhh*3,\vvv*0) rectangle (\hhh*1,\vvv*1);
\draw[line width=\www,dotted] (-\hhh*2,\vvv*1) -- (-\hhh*2,\vvv*0);
\draw[line width=\www,dotted] (-\hhh*1,\vvv*1) -- (-\hhh*1,\vvv*0);
\draw[line width=\www,dotted] (\hhh*0,\vvv*1) -- (\hhh*0,\vvv*0);
\draw[line width=\www, fill=blue!30] (-\hhh*1,-\vvv*2) rectangle (\hhh*5,-\vvv*1);
\draw[line width=\www,dotted] (-\hhh*0,-\vvv*2) -- (-\hhh*0,-\vvv*1);
\draw[line width=\www,dotted] (\hhh*1,-\vvv*2) -- (\hhh*1,-\vvv*1);
\draw[line width=\www,dotted] (\hhh*2,-\vvv*2) -- (\hhh*2,-\vvv*1);
\draw[line width=\www,dotted] (\hhh*3,-\vvv*2) -- (\hhh*3,-\vvv*1);
\draw[line width=\www,dotted] (\hhh*4,-\vvv*2) -- (\hhh*4,-\vvv*1);
\node[] at (-\hhh*3.5,\vvv*0.4) {\small $\tH_i$};
\node[] at (-\hhh*1.5,-\vvv*1.6) {\small $\tH_j$};
\node[] at (\hhh*0.5,\vvv*0.5) {\tiny $q$};
\node[] at (\hhh*1.5,\vvv*0.5) {$\emptyset$};
\node[] at (\hhh*1.5,-\vvv*1.5) {\tiny $q+1$};
\draw[line width=\www, fill=red!20] (-\hhh*3+\hhhh,\vvv*0) rectangle (\hhh*1+\hhhh,\vvv*1);
\draw[line width=\www,dotted] (-\hhh*2+\hhhh,\vvv*1) -- (-\hhh*2+\hhhh,\vvv*0);
\draw[line width=\www,dotted] (-\hhh*1+\hhhh,\vvv*1) -- (-\hhh*1+\hhhh,\vvv*0);
\draw[line width=\www,dotted] (\hhh*0+\hhhh,\vvv*1) -- (\hhh*0+\hhhh,\vvv*0);
\draw[line width=\www, fill=blue!30] (\hhh*1+\hhhh,-\vvv*2) rectangle (\hhh*5+\hhhh,-\vvv*1);
\draw[line width=\www,dotted] (\hhh*2+\hhhh,-\vvv*2) -- (\hhh*2+\hhhh,-\vvv*1);
\draw[line width=\www,dotted] (\hhh*3+\hhhh,-\vvv*2) -- (\hhh*3+\hhhh,-\vvv*1);
\draw[line width=\www,dotted] (\hhh*4+\hhhh,-\vvv*2) -- (\hhh*4+\hhhh,-\vvv*1);
\node[] at (-\hhh*3.5+\hhhh,\vvv*0.4) {\small $\tH_i$};
\node[left] at (\hhh*1+\hhhh,-\vvv*1.6) {\small $\tH_j$};
\node[] at (\hhh*0.5+\hhhh,\vvv*0.5) {\tiny $q$};
\node[] at (\hhh*1.5+\hhhh,\vvv*0.5) {\small $\emptyset$};
\node[] at (\hhh*1.5+\hhhh,-\vvv*1.5) {\tiny $q+1$};
\end{tikzpicture}
\end{displaymath}
\vskip 3mm
\begin{displaymath}
\begin{tikzpicture}[scale=0.8]
\def\hhh{8mm}
\def\vvv{6mm}
\def\www{0.20mm}
\def\hhhh{80mm}
\draw[line width=\www, fill=red!20] (-\hhh*3,\vvv*0) rectangle (\hhh*1,\vvv*1);
\draw[line width=\www,dotted] (-\hhh*2,\vvv*1) -- (-\hhh*2,\vvv*0);
\draw[line width=\www,dotted] (-\hhh*1,\vvv*1) -- (-\hhh*1,\vvv*0);
\draw[line width=\www,dotted] (\hhh*0,\vvv*1) -- (\hhh*0,\vvv*0);
\draw[line width=\www, fill=gray!10] (\hhh*1,\vvv*0) rectangle (\hhh*4,\vvv*1);
\draw[line width=\www,dotted] (\hhh*2,\vvv*1) -- (\hhh*2,\vvv*0);
\draw[line width=\www,dotted] (\hhh*3,\vvv*1) -- (\hhh*3,\vvv*0);
\draw[line width=\www, fill=blue!30] (-\hhh*1,-\vvv*2) rectangle (\hhh*5,-\vvv*1);
\draw[line width=\www,dotted] (-\hhh*0,-\vvv*2) -- (-\hhh*0,-\vvv*1);
\draw[line width=\www,dotted] (\hhh*1,-\vvv*2) -- (\hhh*1,-\vvv*1);
\draw[line width=\www,dotted] (\hhh*2,-\vvv*2) -- (\hhh*2,-\vvv*1);
\draw[line width=\www,dotted] (\hhh*3,-\vvv*2) -- (\hhh*3,-\vvv*1);
\draw[line width=\www,dotted] (\hhh*4,-\vvv*2) -- (\hhh*4,-\vvv*1);
\node[] at (-\hhh*3.5,\vvv*0.4) {\small $\tH_i$};
\node[right] at (\hhh*4,\vvv*0.4) {\small $\tH_{i^-}$};
\node[] at (-\hhh*1.5,-\vvv*1.6) {\small $\tH_j$};
\node[] at (\hhh*0.5,\vvv*0.5) {\tiny $q$};
\node[] at (\hhh*1.5,-\vvv*1.5) {\tiny $q+1$};
\draw[line width=\www, fill=red!20] (-\hhh*3+\hhhh,\vvv*0) rectangle (\hhh*1+\hhhh,\vvv*1);
\draw[line width=\www,dotted] (-\hhh*2+\hhhh,\vvv*1) -- (-\hhh*2+\hhhh,\vvv*0);
\draw[line width=\www,dotted] (-\hhh*1+\hhhh,\vvv*1) -- (-\hhh*1+\hhhh,\vvv*0);
\draw[line width=\www,dotted] (\hhh*0+\hhhh,\vvv*1) -- (\hhh*0+\hhhh,\vvv*0);
\draw[line width=\www, fill=gray!10] (\hhh*1+\hhhh,\vvv*0) rectangle (\hhh*4+\hhhh,\vvv*1);
\draw[line width=\www,dotted] (\hhh*2+\hhhh,\vvv*1) -- (\hhh*2+\hhhh,\vvv*0);
\draw[line width=\www,dotted] (\hhh*3+\hhhh,\vvv*1) -- (\hhh*3+\hhhh,\vvv*0);
\draw[line width=\www, fill=blue!30] (\hhh*1+\hhhh,-\vvv*2) rectangle (\hhh*5+\hhhh,-\vvv*1);
\draw[line width=\www,dotted] (\hhh*1+\hhhh,-\vvv*2) -- (\hhh*1+\hhhh,-\vvv*1);
\draw[line width=\www,dotted] (\hhh*2+\hhhh,-\vvv*2) -- (\hhh*2+\hhhh,-\vvv*1);
\draw[line width=\www,dotted] (\hhh*3+\hhhh,-\vvv*2) -- (\hhh*3+\hhhh,-\vvv*1);
\draw[line width=\www,dotted] (\hhh*4+\hhhh,-\vvv*2) -- (\hhh*4+\hhhh,-\vvv*1);
\node[] at (-\hhh*3.5+\hhhh,\vvv*0.4) {\small $\tH_i$};
\node[right] at (\hhh*4+\hhhh,\vvv*0.4) {\small $\tH_{i^-}$};
\node[] at (\hhh*0.5+\hhhh,-\vvv*1.6) {\small $\tH_j$};
\node[] at (\hhh*0.5+\hhhh,\vvv*0.5) {\tiny $q$};
\node[] at (\hhh*1.5+\hhhh,-\vvv*1.5) {\tiny $q+1$};
\end{tikzpicture}
\end{displaymath}
In (iii), we have
\[
(T')_i^{-s} = q,~ (T')_j^{-t} = q+1,  \text{ and } y_{i,s} = y_{j,t}-1.
\]
The following figure shows how $\tH_i$ and $\tH_j$ appear in $\tau_{T'}$. 
\begin{displaymath}
\begin{tikzpicture}[scale=0.8]
\def\hhh{8mm}
\def\vvv{6mm}
\def\www{0.20mm}
\draw[line width=\www, fill=red!20] (-\hhh*3,\vvv*0) rectangle (\hhh*1,\vvv*1);
\draw[line width=\www,dotted] (-\hhh*2,\vvv*1) -- (-\hhh*2,\vvv*0);
\draw[line width=\www,dotted] (-\hhh*1,\vvv*1) -- (-\hhh*1,\vvv*0);
\draw[line width=\www,dotted] (\hhh*0,\vvv*1) -- (\hhh*0,\vvv*0);
\draw[line width=\www, fill=blue!30] (-\hhh*1,-\vvv*2) rectangle (\hhh*5,-\vvv*1);
\draw[line width=\www,dotted] (-\hhh*0,-\vvv*2) -- (-\hhh*0,-\vvv*1);
\draw[line width=\www,dotted] (\hhh*1,-\vvv*2) -- (\hhh*1,-\vvv*1);
\draw[line width=\www,dotted] (\hhh*2,-\vvv*2) -- (\hhh*2,-\vvv*1);
\draw[line width=\www,dotted] (\hhh*3,-\vvv*2) -- (\hhh*3,-\vvv*1);
\draw[line width=\www,dotted] (\hhh*4,-\vvv*2) -- (\hhh*4,-\vvv*1);
\node[] at (-\hhh*3.5,\vvv*0.4) {\small $\tH_i$};
\node[] at (-\hhh*1.5,-\vvv*1.6) {\small $\tH_j$};
\node[] at (-\hhh*0.5,\vvv*0.5) {\tiny $q$};
\node[] at (\hhh*0.5,-\vvv*1.5) {\tiny $q+1$};
\end{tikzpicture}
\end{displaymath}
In all cases, we can observe that
\begin{align}\label{eq: observation q q+1}
\text{$\tH_i$ and $\tH_j$ share a column
or 
$\tH_i, \tH_j$ are attacking in $\tcd(\alpha)$.}
\end{align}

Recall that $q \in [a_{0}+\cdots+a_{v} +1, a_{0}+\cdots+a_{v+1}]$
and the index $i$ lies in $[v',v'']$
by~\eqref{eq: interval and T'}.
We now claim that $q+1$ is not contained in $[a_{0}+\cdots+a_{v} +1, a_{0}+\cdots+a_{v+1}]$.
Otherwise, $q+1$ is contained in $\tau_{T'}(\tH_{v'}) \cup \tau_{T'}(\tH_{v'+1}) \cup \cdots \cup \tau_{T'}(\tH_{v''})$ by~\eqref{eq: interval and T'},
so $j \in [v',v'']$.
Applying the observation (O2) in the proof of Lemma~\ref{Lem:T0 are SPCTs} repeatedly,
we deduce that
$\tH_{\mathbf{i}}$ and $\tH_{j}$ do not share any column 
and 
$\tH_i, \tH_j$ are not attacking.
But this gives a contradiction to~\eqref{eq: observation q q+1}, which verifies the claim.
This implies that $q = a_{0}+\cdots+a_{v+1}$.
Thus we conclude that $q$ is a descent of $T_0^{(\bal_E^{\min})}$, as required.
\end{proof}

The following corollary is an immediate consequence of Theorem~\ref{Thm:projective cover S}.
\begin{corollary}{\rm (cf.~\cite[Theorem 5.5]{20CKNO})}
\label{Cor: Classification covers}
Let $E$ be a class in $\mathcal{E}^\sigma(\alpha)$
whose source tableau has $m$ descents.
Then the projective cover of $\bfS^\sigma_{\alpha,E}$ is indecomposable
if and only if
for all $1 \leq i \leq m$,    
$\tH_i$ and $\tH_{i+1}$ share a column 
or $d_{i-1}+1$ and $d_{i+1}$ are attacking in $\tauE$.
In particular, 
every canonical class satisfies this condition.
\end{corollary}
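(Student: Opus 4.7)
The plan is to reduce indecomposability of the projective cover to a combinatorial property of $\bal_E$. By Theorem~\ref{Thm:projective cover S}, the projective cover of $\bfS^\sigma_{\alpha,E}$ is $\bbfP_{\bal_E}$. Combining the isomorphism $\bfP_\alpha \cong \bbfP_{\alpha^\rmc}$ with Theorem~\ref{thm: bfP isom to calP}(b), one obtains
\[
\bbfP_{\bal_E} \;\cong\; \bigoplus_{\beta \in [\bal_E]} \bfP_{\beta^\rmc}.
\]
Since each $\bfP_{\beta^\rmc}$ is a projective indecomposable by Norton's classification, $\bbfP_{\bal_E}$ is indecomposable if and only if $|[\bal_E]| = 1$, equivalently, $\bal_E$ contains no $\oplus$ in its expression.

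Next, I would unpack this singleton condition via the recursive construction of $\bal_E = \bal^{(m+1)}$: the absence of $\oplus$ is equivalent to the near-concatenation branch being chosen at every step $j = 2, \ldots, m+1$. Using the pictorial description of $\trd(\bal^{(j)})$ (columns $j-1$ and $j$ of the generalized ribbon diagram are joined precisely when $\tH_{j-1}$ and $\tH_j$ share a column in $\tcd(\alpha)$ or are attacking there), this translates directly to: for every $1 \le i \le m$, either $\tH_i$ and $\tH_{i+1}$ share a column, or $d_{i-1}+1$ and $d_{i+1}$ are attacking in $\tauE$, which is precisely the criterion in the statement.

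For the last clause, I would observe that the canonical source tableau $\tauC$ has each $\tH_i$ equal to the entire row of index $\sigma^{-1}(i)$ of $\tcd(\alpha)$, filled with consecutive integers in decreasing order from left to right. Hence every $\tH_i$ contains the box in column $1$, so $\tH_i$ and $\tH_{i+1}$ share column $1$ for every $i$, and the criterion is automatically satisfied. The chief technical point is the pictorial equivalence invoked in the middle paragraph; it rests on the source-tableau property that consecutive nondescent entries are placed immediately to the left, so each $\tH_i$ lies in a single row with $d_i$ at its leftmost box and $d_{i-1}+1$ at its rightmost box, whence ``$\tH_i$ and $\tH_{i+1}$ share a column'' reduces to a direct column-index comparison between $d_{i-1}+1$ and $d_{i+1}$.
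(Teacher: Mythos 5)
Your proof is correct and follows the same route that the paper leaves implicit (the paper states the corollary as an "immediate consequence" of Theorem~\ref{Thm:projective cover S} without a written-out argument): combine $\bbfP_{\bal_E} \cong \bigoplus_{\beta \in [\bal_E]} \bfP_{\beta^{\rmc}}$ with the fact that $|[\bal_E]| = 1$ exactly when the near-concatenation branch is taken at every step of the recursion, and then read off the criterion from the pictorial description of $\trd(\bal^{(j)})$ (which, as you correctly noted, encodes the join between columns $j-1$ and $j$ in terms of $\tH_{j-1}$ and $\tH_j$, resolving the off-by-one index shift in the displayed recursion). Your observation that for $\tauC$ each $\tH_i$ is an entire row, hence contains column $1$, cleanly handles the canonical-class clause.
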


\ytableausetup{mathmode, boxsize=1em}

The following figure shows how $\eta:\bbfP_{(1)\oplus(2,2)} \ra \bfS^{231}_{(2,2,1),E}$ works on the level of bases, where $E$ is the class having
\[
\tau_E =
\begin{array}{l}
\begin{ytableau}
4 & 3 \\
5 & 2 \\
1
\end{ytableau}
\end{array}
\]
as the source tableau.
The shaded part indicates the basis elements whose image is nonzero 
and the unshaded part indicates the basis elements whose image is zero.
We are here omitting the arrows acting as the identity as well as the zero map.
\begin{figure}[h]
\begin{displaymath}
\begin{tikzpicture}
\def \hhh{33mm}
\def \vvv{21mm}
\def \shh{13mm}
\def \svv{7mm}
\node at (0,0) (A1) {};
\node at (-\hhh,-\vvv*1) (B1) {};
\node at (-\hhh*0,-\vvv*1) (B2) {};
\node at (\hhh*1,-\vvv*1) (B3) {};
\node at (-\hhh*1.5,-\vvv*2) (C1) {};
\node at (-\hhh*0.5,-\vvv*2) (C2) {};
\node at (\hhh*0.5,-\vvv*2) (C3) {};
\node at (\hhh*1.5,-\vvv*2) (C4) {};
\node at (-\hhh*2,-\vvv*3) (O1) {};
\node at (-\hhh*1,-\vvv*3) (O2) {};
\node at (\hhh*0,-\vvv*3) (D3) {};
\node at (\hhh*1,-\vvv*3) (D4) {};
\node at (\hhh*2,-\vvv*3) (D5) {};
\node at (-\hhh*2,-\vvv*4) (E1) {};
\node at (-\hhh*1,-\vvv*4) (E2) {};
\node at (\hhh*0,-\vvv*4) (E3) {};
\node at (\hhh*1,-\vvv*4) (E4) {};
\node at (\hhh*2,-\vvv*4) (E5) {};
\node at (-\hhh*1.5,-\vvv*5) (F1) {};
\node at (-\hhh*0.5,-\vvv*5) (F2) {};
\node at (\hhh*0.5,-\vvv*5) (F3) {};
\node at (\hhh*1.5,-\vvv*5) (F4) {};
\node at (-\hhh*0.5,-\vvv*6) (G1) {};
\node at (\hhh*0.5,-\vvv*6) (G2) {};
\node at (-\hhh*0,-\vvv*7) (H1) {};
\node at (A1) {
\begin{ytableau}
\none & \none & *(blue!20) 3 & *(blue!20) 5 \\
\none & *(blue!20) 2 & *(blue!20) 4 \\
*(blue!20) 1
\end{ytableau}};
\node at (B1) {
\begin{ytableau}
\none & \none & *(blue!20) 3 & *(blue!20) 5 \\
\none & *(blue!20) 1 & *(blue!20) 4 \\
*(blue!20) 2
\end{ytableau}};
\node at (B2) {
\begin{ytableau}
\none & \none & 2 & 5 \\
\none & 3 & 4 \\
1
\end{ytableau}};
\node at (B3) {
\begin{ytableau}
\none & \none & 3 & 4 \\
\none & 2 & 5 \\
1
\end{ytableau}};
\node at (C1) {
\begin{ytableau}
\none & \none & *(blue!20) 2 & *(blue!20) 5 \\
\none & *(blue!20) 1 & *(blue!20) 4 \\
*(blue!20) 3
\end{ytableau}};
\node at (C2) {
\begin{ytableau}
\none & \none & 3 & 4 \\
\none & 1 & 5 \\
2
\end{ytableau}};
\node at (C3) {
\begin{ytableau}
\none & \none & 1 & 5 \\
\none & 3 & 4 \\
2
\end{ytableau}};
\node at (C4) {
\begin{ytableau}
\none & \none & 2 & 4 \\
\none & 3 & 5 \\
1
\end{ytableau}};
\node at (O1) {
\begin{ytableau}
\none & \none & 2 & 5 \\
\none & 1 & 3 \\
4
\end{ytableau}};
\node at (O2) {
\begin{ytableau}
\none & \none & 2 & 4 \\
\none & 1 & 5 \\
3
\end{ytableau}};
\node at (D3) {
\begin{ytableau}
\none & \none & 1 & 5 \\
\none & 2 & 4 \\
3
\end{ytableau}};
\node at (D4) {
\begin{ytableau}
\none & \none & 1 & 4 \\
\none & 3 & 5 \\
2
\end{ytableau}};
\node at (D5) {
\begin{ytableau}
\none & \none & 2 & 3 \\
\none & 4 & 5 \\
1
\end{ytableau}};
\node at (E1) {
\begin{ytableau}
\none & \none & 2 & 4 \\
\none & 1 & 3 \\
5
\end{ytableau}};
\node at (E2) {
\begin{ytableau}
\none & \none & 1 & 5 \\
\none & 2 & 3 \\
4
\end{ytableau}};
\node at (E3) {
\begin{ytableau}
\none & \none & 2 & 3 \\
\none & 1 & 5 \\
4
\end{ytableau}};
\node at (E4) {
\begin{ytableau}
\none & \none & 1 & 4 \\
\none & 2 & 5 \\
3
\end{ytableau}};
\node at (E5) {
\begin{ytableau}
\none & \none & 1 & 3 \\
\none & 4 & 5 \\
2
\end{ytableau}};
\node at (F1) {
\begin{ytableau}
\none & \none & 2 & 3 \\
\none & 1 & 4 \\
5
\end{ytableau}};
\node at (F2) {
\begin{ytableau}
\none & \none & 1 & 4 \\
\none & 2 & 3 \\
5
\end{ytableau}};
\node at (F3) {
\begin{ytableau}
\none & \none & 1 & 3 \\
\none & 2 & 5 \\
4
\end{ytableau}};
\node at (F4) {
\begin{ytableau}
\none & \none & 1 & 2 \\
\none & 4 & 5 \\
3
\end{ytableau}};
\node at (G1) {
\begin{ytableau}
\none & \none & 1 & 3 \\
\none & 2 & 4 \\
5
\end{ytableau}};
\node at (G2) {
\begin{ytableau}
\none & \none & 1 & 2 \\
\none & 3 & 5 \\
4
\end{ytableau}};
\node at (H1) {
\begin{ytableau}
\none & \none & 1 & 2 \\
\none & 3 & 4 \\
5
\end{ytableau}};
\draw[->] ([xshift=-\shh,yshift=-\svv]A1.south) -- ([xshift=+\shh,yshift=+\svv]B1.north) node[above,midway,sloped] {\footnotesize $\pi_1$};
\draw[->, shorten <=8mm] ([yshift=\svv*0.1]A1.south) -- ([yshift=\svv]B2.north) node[right,yshift=\svv*0.2] {\footnotesize $\pi_2$};
\draw[->, shorten <=8mm] ([xshift=\shh*0.3,yshift=-\svv*0.2]A1.south) -- ([xshift=-\shh*0.3,yshift=\svv]B3.north) node[above,midway,sloped] {\footnotesize $\pi_4$};
\draw[->] ([xshift=-\shh*0.5,yshift=-\svv]B1.south) -- ([xshift=8mm,yshift=+\svv]C1.north) node[left,midway] {\footnotesize $\pi_2$};
\draw[->, shorten <=8mm] ([yshift=\svv*0.1]B1.south) -- ([xshift=-5mm,yshift=\svv]C2.north) node[pos=0.8,left] {\footnotesize $\pi_4$};
\draw[->] ([xshift=0,yshift=-\svv]B2.south) -- ([xshift=0,yshift=+\svv]C3.north) node[pos=0.6, below] {\footnotesize $\pi_1$};
\draw[->, shorten <=8mm] ([xshift=-\shh*0.4,yshift=-\svv*0.5]B2.south) -- ([xshift=-\shh*0.5,yshift=\svv]C4.north) node[below,midway,yshift=0] {\footnotesize $\pi_4$};
\draw[->] ([xshift=-\shh,yshift=-\svv*0.7]B3.south) -- ([xshift=+\shh,yshift=+\svv]C2.north) node[above,sloped,pos=0.2] {\footnotesize $\pi_1$};
\draw[->] ([xshift=\shh*0.4,yshift=-\svv*0.4]B3.south) -- ([xshift=-\shh*0.2,yshift=+\svv]C4.north) node[right,midway] {\footnotesize $\pi_2$};
\draw[->] ([xshift=-\shh*0.5,yshift=-\svv]C1.south) -- ([xshift=8mm,yshift=+\svv]O1.north) node[left,midway] {\footnotesize $\pi_3$};
\draw[->, shorten <=8mm] ([yshift=\svv*0.1]C1.south) -- ([xshift=-5mm,yshift=\svv]O2.north) node[pos=0.8,left] {\footnotesize $\pi_4$};
\draw[->, shorten <=8mm] ([yshift=-\svv*0.1]C1.south) -- ([xshift=-7mm,yshift=\svv*0.9]D3.north) node[pos=0.8,below] {\footnotesize $\pi_1$};
\draw[->] ([xshift=-\shh*0.6,yshift=-\svv*0.9]C2.south) -- ([xshift=\shh*0.4,yshift=\svv*0.9]O2.north) node[pos=0.7,right] {\footnotesize $\pi_2$};
\draw[->, shorten <=8mm] ([xshift=-\shh*0.1,yshift=-\svv*0.1]C3.south) -- ([xshift=\shh*0.7,yshift=\svv*0.9]D3.north) node[pos=0.8,right] {\footnotesize $\pi_2$};
\draw[->] ([xshift=-\shh*0.1,yshift=-\svv*0.5]C3.south) -- ([xshift=-\shh*0.3,yshift=\svv*0.8]D4.north) node[above,sloped,pos=0.4] {\footnotesize $\pi_4$};
\draw[->] ([xshift=-\shh*0.5,yshift=-\svv*0.9]C4.south) -- ([xshift=\shh*0.5,yshift=\svv]D4.north) node[right,midway] {\footnotesize $\pi_1$};
\draw[->] ([xshift=-\shh*0.2,yshift=-\svv*0.5]C4.south) -- ([xshift=-\shh*0.8,yshift=+\svv]D5.north) node[right,midway] {\footnotesize $\pi_3$};
\draw[->] ([xshift=\shh*0,yshift=-\svv]O1.south) -- ([xshift=\shh*0,yshift=+\svv]E1.north) node[left,midway] {\footnotesize $\pi_4$};
\draw[->, shorten <=8mm] ([yshift=\svv*0.1]O1.south) -- ([xshift=-5mm,yshift=\svv]E2.north) node[pos=0.8,left] {\footnotesize $\pi_1$};
\draw[->, shorten <=8mm] ([xshift=-\shh*0.2,yshift=-\svv*0.1]O2.south) -- ([xshift=-\shh*0.5,yshift=\svv*1]E3.north) node[pos=0.7,below] {\footnotesize $\pi_3$};
\draw[->] ([xshift=\shh*0.5,yshift=-\svv*0.5]O2.south) -- ([xshift=-\shh*0.5,yshift=\svv*0.7]E4.north) node[pos=0.5,below] {\footnotesize $\pi_1$};
\draw[->] ([xshift=-\shh*0.9,yshift=-\svv*0.8]D3.south) -- ([xshift=\shh*0.8,yshift=\svv*1]E2.north) node[pos=0.9,above] {\footnotesize $\pi_3$};
\draw[->] ([xshift=-\shh*0.1,yshift=-\svv*0.5]D3.south) -- ([xshift=-\shh*0.3,yshift=\svv*0.8]E4.north) node[above,sloped,pos=0.4] {\footnotesize $\pi_4$};
\draw[->] ([xshift=-\shh*0.0,yshift=-\svv*0.7]D4.south) -- ([xshift=\shh*0.2,yshift=\svv]E4.north) node[right,midway] {\footnotesize $\pi_2$};
\draw[->] ([xshift=\shh*0.2,yshift=-\svv*0.5]D4.south) -- ([xshift=-\shh*0.3,yshift=+\svv]E5.north) node[right,midway] {\footnotesize $\pi_3$};
\draw[->] ([xshift=\shh*0.2,yshift=-\svv*0.5]D5.south) -- ([xshift=\shh*0.2,yshift=+\svv]E5.north) node[right,midway] {\footnotesize $\pi_1$};
\draw[->] ([xshift=\shh*0,yshift=-\svv]E1.south) -- ([xshift=\shh*0,yshift=+\svv]F1.north) node[below,midway] {\footnotesize $\pi_3$};
\draw[->, shorten <=8mm] ([yshift=-\svv*0.1]E1.south) -- ([xshift=-5mm,yshift=\svv*0.8]F2.north) node[pos=0.4,below] {\footnotesize $\pi_1$};
\draw[->, shorten <=8mm] ([xshift=-\shh*0.2,yshift=-\svv*0.1]E2.south) -- ([xshift=-\shh*0,yshift=\svv*0.9]F2.north) node[pos=0.5,right] {\footnotesize $\pi_4$};
\draw[->] ([xshift=-\shh*1,yshift=-\svv*1]E3.south) -- ([xshift=\shh*1,yshift=\svv*0.9]F1.north) node[pos=0.1,below] {\footnotesize $\pi_4$};
\draw[->] ([xshift=\shh*0.2,yshift=-\svv*0.8]E3.south) -- ([xshift=-\shh*0.2,yshift=\svv*1]F3.north) node[pos=0.8,above] {\footnotesize $\pi_1$};
\draw[->] ([xshift=-\shh*0.1,yshift=-\svv*0.5]E4.south) -- ([xshift=\shh*0.7,yshift=\svv*1]F3.north) node[right,pos=0.6] {\footnotesize $\pi_3$};
\draw[->] ([xshift=-\shh*0.0,yshift=-\svv*0.7]E5.south) -- ([xshift=\shh*0.6,yshift=\svv*1]F4.north) node[right,midway] {\footnotesize $\pi_2$};
\draw[->] ([xshift=\shh*0.5,yshift=-\svv*0.7]F1.south) -- ([xshift=-\shh*0.3,yshift=+\svv]G1.north) node[below,midway] {\footnotesize $\pi_1$};
\draw[->, shorten <=8mm] ([yshift=\svv*0.3]F2.south) -- ([xshift=\shh*0.1,yshift=\svv*0.9]G1.north) node[pos=0.7,right] {\footnotesize $\pi_3$};
\draw[->, shorten <=8mm] ([xshift=-\shh*0.2,yshift=-\svv*0.5]F3.south) -- ([xshift=\shh*0.9,yshift=\svv*0.9]G1.north) node[pos=0.7,above] {\footnotesize $\pi_4$};
\draw[->] ([xshift=\shh*0.5,yshift=-\svv*0.7]F3.south) -- ([xshift=\shh*0.5,yshift=\svv*0.9]G2.north) node[pos=0.5,right] {\footnotesize $\pi_2$};
\draw[->] ([xshift=-\shh*0.9,yshift=-\svv*0.9]F4.south) -- ([xshift=\shh*1,yshift=\svv*0.9]G2.north) node[pos=0.7,right] {\footnotesize $\pi_3$};
\draw[->] ([xshift=\shh*0.1,yshift=-\svv*0.5]
G1.south) -- ([xshift=\shh*0,yshift=\svv*1]H1.north) node[left,pos=0.6] {\footnotesize $\pi_2$};
\draw[->] ([xshift=-\shh*0.0,yshift=-\svv*0.7]G2.south) -- ([xshift=\shh*0.8,yshift=\svv*1]H1.north) node[right,midway] {\footnotesize $\pi_4$};
\draw[-,dotted,red,line width=0.30mm] (-\shh*1.2,\svv*1.2) -- (\shh*6.2,\svv*1.2) -- (\shh*6.2,-\svv*16.2) -- (\shh*2.5,-\svv*16.2) -- (\shh*2.5,-\svv*14.2) -- (\shh*4,-\svv*10.5) -- (\shh*1.5,-\svv*10.5) -- (\shh*1.5,-\svv*7.7) -- (\shh*0,-\svv*7.7) -- (\shh*0,-\svv*4.4) -- (-\shh*1.2,-\svv*4.4) -- (-\shh*1.2,\svv*1.2); 
\draw[-,dotted,red,line width=0.30mm] (-\shh*1.2,-\svv*1.6) -- (-\shh*6,-\svv*1.6) -- (-\shh*6,-\svv*22.3) -- (\shh*2.3,-\svv*22.3) -- (\shh*2.3,-\svv*16.2) -- (\shh*2.5,-\svv*16.2); 
\node[left] at (\shh*3.5,-\svv*0) {\small $\cong \bbfP_{(3,2)}$};
\node[left] at (-\shh*4,-\svv*2.5) {\small $\bbfP_{(1,2,2)} \cong$};
\end{tikzpicture}
\end{displaymath}
\caption{$\eta:\bbfP_{(1)\oplus(2,2)} \ra \bfS^{231}_{(2,2,1),E}$}
\end{figure}
\newpage

We close this section with the remark concerned with future research.
\begin{remark} \hfill
\begin{enumerate}[label = {\rm (\roman*)}]
\item Given a generalized composition $\bgam$,
it would be very nice to characterize the triples $(\alpha,\sigma,E)$
such that $\bgam = \bal_E$.
It enables us to classify all $\bfS^\sigma_{\alpha,E}$'s having $\bbfP_\bgam$ as the projective cover.

\item Let $M$ be a finitely generated module. 
It is well known that the projective presentation of $M$ contains important information on the structure of $M$.
We expect that Lemma~\ref{lem: kernel of eta} and Theorem~\ref{Thm:projective cover S} play a substantial role in 
constructing the projective presentation of $\bfS^\sigma_{\alpha,E}$. 
\end{enumerate}
\end{remark}

\vspace*{10mm}

\bibliographystyle{amsplain}

\end{document}